\title{TRANSSERIES FOR BEGINNERS}
\author{G.A. Edgar,
 Department of Mathematics, The Ohio State University,
 Columbus, Ohio 43210, USA
 (e-mail: edgar@math.ohio-state.edu)
 }
\theoremstyle{plain}
\newtheorem{pr}{Proposition}
\newtheorem{thm}[pr]{Theorem}
\theoremstyle{remark}
\newtheorem{re}[pr]{Remark}
\newtheorem{de}[pr]{Definition}
\newtheorem{no}[pr]{Notation}
\newtheorem{ex}[pr]{Example}
\newtheorem{problm}[pr]{Problem}
\newtheorem{comq}[pr]{$\bigcirc\;$ Comment}
\numberwithin{pr}{section}
\newcommand{\ec}{\hfill$\bigcirc$\end{comq}} 
\newenvironment{com}{\begin{comq}}{\ec} 
\newenvironment{comnoc}{\begin{comq}}{\end{comq}} 
\newcommand{\Def}[1]{\textbf{\itshape #1}} 
\renewcommand{\phi}{\varphi}
\renewcommand{\epsilon}{\varepsilon}
\renewcommand{\emptyset}{\varnothing}
\newcommand{\takes}{\colon}
\newcommand{\fgt}{\succ}
\newcommand{\fgte}{\succcurlyeq}
\newcommand{\fst}{\prec}
\newcommand{\fste}{\preccurlyeq}
\newcommand{\cto}{\asymp}
\newcommand{\ato}{\sim}
\newcommand{\SET}[2]{ \left\{\, {#1} : {#2} \,\right\} }
\newcommand{\R}{\mathbb R}
\newcommand{\C}{\mathbb C}
\newcommand{\N}{\mathbb N}
\newcommand{\Z}{\mathbb Z}
\newcommand{\G}{\mathfrak G}
\newcommand{\GRID}{\mathfrak J}
\renewcommand{\AA}{\mathfrak A}
\newcommand{\BB}{\mathfrak B}
\newcommand{\MM}{\mathfrak M}
\newcommand{\T}{\mathbb T}
\renewcommand{\P}{\EuScript P}
\renewcommand{\L}{\EuScript L}
\newcommand{\Bor}{\EuScript B}
\newcommand{\BE}{\mathbf E}
\newcommand{\BF}{\mathbf F}
\newcommand{\J}{\mathbf J}
\newcommand{\W}{\mathbf W}
\renewcommand{\o}{\mathrm o}
\renewcommand{\O}{\mathrm O}
\newcommand{\showfraktur}[1]{(\;{#1}\;\mathfrak{#1}\;)}
\newcommand{\muto}{\overset{\ebmu}{\longrightarrow}}
\newcommand{\mto}{\overset{\bm}{\longrightarrow}}
\newcommand{\bk}{\mathbf k}
\newcommand{\bp}{\mathbf p}
\newcommand{\bm}{\mathbf m}
\newcommand{\0}{\mathbf 0}
\newcommand{\fa}{\mathfrak a}
\newcommand{\fb}{\mathfrak b}
\newcommand{\g}{\mathfrak g}
\newcommand{\m}{\mathfrak m}
\newcommand{\n}{\mathfrak n}
\renewcommand{\l}{\mathfrak l}
\newcommand{\A}{\mathbf A}
\newcommand{\Gsmall}{\G^{\mathrm{small}} }
\newcommand{\Msmall}{\MM^{\mathrm{small}} }
\newcommand{\supp}{\operatorname{supp}}
\newcommand{\lsupp}{\operatorname{lsupp}}
\renewcommand{\mag}{\operatorname{mag}}
\newcommand{\dom}{\operatorname{dom}}
\newcommand{\Ei}{\operatorname{Ei}}
\newcommand{\Max}{\operatorname{Max}}
\newcommand{\Min}{\operatorname{Min}}
\newcommand{\SUBSECformalseries}{3B}
\newcommand{\SUBSECgenerators}{3C}
\newcommand{\SUBSECtsinf}{3D}
\newcommand{\SUBSECwithlogs}{3E}
\newcommand{\bmu}{{\boldsymbol{\mu}}}
\newcommand{\ebmu}{{\boldsymbol{\mu}}}
\newcommand{\tbmu}{{\widetilde{\bmu}}}
\newcommand{\tebmu}{{\tilde{\ebmu}}}
\newlength{\uuu}
\newcommand{\lbb}{\begin{picture}(8,8)(-2,2)
	\put(0,0){\line(0,1){9}}
	\put(2,0){\line(0,1){9}}
	\put(0,0){\line(1,0){5}}
	\put(0,9){\line(1,0){5}}
	\end{picture}}
\newcommand{\rbb}{\begin{picture}(7,8)(0,2)
	\put(3,0){\line(0,1){9}}
	\put(5,0){\line(0,1){9}}
	\put(0,0){\line(1,0){5}}
	\put(0,9){\line(1,0){5}}
	\end{picture}}
\newcommand{\lbbb}{\begin{picture}(10,8)(-2,2)
	\put(0,0){\line(0,1){9}}
	\put(2,0){\line(0,1){9}}
	\put(4,0){\line(0,1){9}}
	\put(0,0){\line(1,0){7}}
	\put(0,9){\line(1,0){7}}
	\end{picture}}
\newcommand{\rbbb}{\begin{picture}(9,8)(0,2)
	\put(3,0){\line(0,1){9}}
	\put(5,0){\line(0,1){9}}
	\put(7,0){\line(0,1){9}}
	\put(0,0){\line(1,0){7}}
	\put(0,9){\line(1,0){7}}
	\end{picture}}
\begin{document}
\maketitle

\begin{abstract}
From the simplest point of view, transseries are a new kind of
expansion for real-valued functions.  But transseries constitute much more
than that---they have a very rich (algebraic, combinatorial,
analytic) structure.  The set of transseries is a large
ordered field, extending the real number field, and
endowed with additional operations such as exponential,
logarithm, derivative, integral, composition.
Over the course of the last 20 years or so,
transseries have emerged in several areas of mathematics:
asymptotic analysis, model theory, computer algebra, surreal numbers.
This paper is an exposition for the non-specialist mathematician.

\textit{All a mathematician needs to know
in order to apply transseries.}
\end{abstract}

\tableofcontents

\section*{Introduction}
\addcontentsline{toc}{section}{Introduction}%
Although transseries expansions are prominent in certain areas
of mathematics, they are not well known to mathematicians
in general.  Here, I try to bring these beautiful
mathematical structures to the attention of non-specialists.
This paper complements the already-existing survey articles
such as \cite{asch, ressayre}, or monographs
\cite{hoeven, kuhlmann, schmeling}.

Transseries come in various flavors.   Here I focus on one particular
variant---the \Def{real, grid-based} transseries---since they are the ones
which are most amenable to explicit computations, and
transseries representing real-valued functions
naturally arising in analysis (e.g., as solutions to algebraic
differential equations) are usually of this type.
Once familiar with one variant, it should be relatively easy
to work with another.

The major part of this paper (Section~\ref{startformal})
presents a formal construction of the
differential field of real, grid-based transseries. 
Section~\ref{excomp} illustrates
its use in practice through examples: transseries expansions for
functional inverses, for anti-derivatives, for solutions
of differential equations, etc.   The development is entirely
formal; the analytic aspects and origins of the subject
(computer algebra limit algorithms,
\'Ecalle's generalization of Borel summation, Hardy fields, etc.)
are omitted---a survey of that
aspect of the subject would warrant a
separate paper.  This restriction allows for a
self-contained exposition, suited for mathematicians regardless
of their specialties.

There are several constructions of the various fields of
transseries already in the literature, smoothing out and filling
in details in the original papers; for example:
van den Dries--\-Macintyre--Marker \cite{DMM},
van der Hoeven \cite{hoeven}, and Costin \cite{costintop}.
These all require a certain technical apparatus,
despite the simplicity of the basic construction.
Here I try to avoid such requirements and assume only
a minimum of background knowledge.

Sections~\ref{salespitch} and \ref{what}, which are intended to lure the
reader into the transseries world, give examples
of natural computations which can be made precise in this framework.
Section~\ref{startformal} deals with the rigorous construction of grid-based
transseries.  It is intended as: \textit{All a mathematician needs to know
in order to apply transseries.}
Section~\ref{excomp} contains worked-out examples,
partly computed with the aid of computer algebra software.
Section~\ref{addremarks} gives suggestions for further reading.
(This introduction is taken mostly from an anonymous referee's report
for an earlier draft of the paper.  That referee understood
what this paper is about better than I did myself!)

Review of ``Fraktur'' style letters:
\begin{equation*}
\showfraktur{A}
\showfraktur{B}
\showfraktur{G}
\showfraktur{J}
\showfraktur{M}
\showfraktur{a}
\showfraktur{b}
\showfraktur{g}
\showfraktur{l}
\showfraktur{m}
\showfraktur{n}
\end{equation*}

\section{Sales Pitch}\label{salespitch}
One day long ago, I wrote Stirling's
formula like this:
\begin{equation*}
	\log \Gamma(x) = \left(x - \frac{1}{2}\right)\log x - x +
	\frac{\log(2\pi)}{2} + \sum_{n=1}^\infty
	\frac{B_{2n}}{2n(2n-1)x^{2n-1}} ,
\end{equation*}
where the $B_{2n}$ are the Bernoulli numbers.
But my teacher gently told me that the series diverges
for every $x$.  What a disappointment!

Leonhard Euler \cite[p.~220]{euler} (the master of us all \cite{dunham}) wrote:
\begin{equation*}
	\sum_{j=0}^\infty \frac{(-1)^jj!}{x^{j+1}} = -e^{x}\Ei(-x),
\end{equation*}
where the exponential integral function is defined
by
\begin{equation*}
	\Ei(-x) := \int_{-\infty}^{-x} \frac{e^t}{t}\,dt .
\end{equation*}
But later mathematicians sneered at this,
saying that the series diverges wildly.

To study a sequence $a_j$, it is sometimes useful to consider
the ``generating function''
\begin{equation*}
	\sum_{j=0}^\infty a_j z^j = \sum_{j=0}^\infty \frac{a_j}{x^j} .
\end{equation*}
(The change of variables $z=1/x$ was made so that we can
consider not $z$ near zero but $x$ near infinity, as we will always
do here.)  In fact,
it is quite useful to consider such a series ``formally''
even if the series diverges \cite{wilf}.
The generating function for the sequence $2^j$ is of course
\begin{equation*}
	\sum_{j=0}^\infty \frac{2^j}{x^j} = \frac{1}{1-2/x} .
\end{equation*}
But who among you has not secretly substituted $x=1$
to get
\begin{equation*}
	\sum_{j=0}^\infty 2^j = -1
\end{equation*}
and wondered at it?

To study asymptotic behavior of functions,
G. H. Hardy promoted the class $L$ of ``orders of infinity'': all
functions (near $\infty$) obtained starting with constants and $x$,
then applying the field operations,
$\exp$, and $\log$ repeatedly in any order.
Function $x e^x$ is a valid member of that class.
Liouville had shown that its inverse function isn't.
What cruel classification
would admit a function but not its inverse?

Undergraduate courses in ordinary differential equations
tell us how to solve a linear differential equation
with analytic coefficients in terms of power series---at least
at ordinary points, and at regular singular points.  But
power series solutions do not work at irregular singular points.
Is it hopeless to understand solutions near these points?

Solving linear differential equations with
constant coefficients can be approached by factoring
of operators.  Take, for example, $3Y'' - 5Y' -2Y = 3x$.
Writing $\partial$ for the derivative operator and
$I$ for the identity operator, this
can be written $L[Y] = 3x$, where $L = 3\partial^2-5\partial-2I$.
Then factor this polynomial,
$L = 3(\partial-2I)(\partial+(1/3)I)$ and solve
$L[Y]=3x$ with two successive integrations:
First write $Y_1 = (\partial+(1/3)I)Y$.  Then solve
$\partial Y_1 - 2 Y_1 = x$ to get $Y_1 = A e^{2x}-1/4-x/2$.
Then solve $\partial Y + (1/3)Y = A e^{2x}-1/4-x/2$ to get
$Y=(3A/7)e^{2x}+Be^{-x/3}+15/4-3x/2$.  Wouldn't it
be grand if this could be done for linear differential
equations with variable coefficients?  But
we cannot solve the differential equation
$Y'' + xY' + Y = 0$ by factoring
$\partial^2 + x\partial + I =
(\partial - \alpha(x)I) (\partial - \beta(x)I)$,
where $\alpha(x)$ and $\beta(x)$ are polynomials;
or rational functions; or elementary functions.
But what if we could factor with some new, improved, simple,
versatile class of functions?

\medskip
Well, brothers and sisters, I am here today to tell you:
If you love these formulas, you need no longer hide
in the shadows!  The answer to all of these woes is here.

\medskip\emph{Transseries}\medskip

The differential field of transseries was
discovered [or, some would say, invented]
independently in various parts of
mathematics: asymptotic analysis,
model theory, computer algebra, surreal numbers.
Some feel it was surprisingly
recent for something so natural.
Roots of the subject go back to
\'Ecalle \cite{ecalledulac} and Il$'$yashenko
\cite {ilyashenko} working in asymptotic analysis;
Dahn and G\"oring \cite{dahn, dahng} working in model theory;
Geddes \& Gonnet \cite{geddes} working in computer algebra;
Kruskal working in surreal numbers
(unpublished: see the Epilog in the Second Edition of \textit{On
Numbers and Games} \cite{conway}).
They arrived at eerily similar mathematical structures, although
they did not have all the same features.
It is \'Ecalle who recognized the power of these objects, coined the term,
developed them systematically and in their own right, found ``the'' way to
associate functions to them.
[I~am not tracing the history here.
Precursors---in addition to G. H. Hardy,
Levi-Cevita \cite{levi}, du Bois-Reymond \cite{dubois},
even Euler---include
Lightstone \& Robinson \cite{light},
Salvy \& Shackell \cite{salvy},
Rosenlicht \cite{rosenlicht}, and
Boshernitzan \cite{boshernitzan}.
This listing is far from complete:  Additional historical
remarks are in \cite{hoeven, ressayre, shackell}.]

I hope this paper will show that knowledge of model
theory or asymptotic analysis or computer algebra
or surreal numbers is not required in order
to understand this new, beautiful, complex object.

In this paper, we consider only series used for $x \to +\infty$.
Limits at other locations, and from other directions, are
related to this by a change of variable.  For example,
to consider $z \to 1$ from the left, write $z=1-1/x$ or
$x=1/(1-z)$.

\section{What Is a Transseries?}\label{what}
There is an ordered group $\G$ of \Def{transmonomials} and
a differential field $\T$ of \Def{transseries}.  But $\G$ and $\T$
are each defined in terms of the other, in the way
logicians like to do.  There is even
some spiffy notation (taken from \cite{hoeven}):
$\T = \R\lbb\G\rbb = \R\lbbb x \rbbb$.
The definition is carried out
formally in Section~\ref{startformal}.
But for now let's see informally what they look like.
[This is ``informal'' since, for example,
some terms are used before they are defined, so that the whole
thing is circular.]

\begin{enumerate}
\item[(a)] A log-free \Def{transmonomial}
has the form $x^b e^L$, where $b$ is real
and $L$ is a purely large log-free transseries;
``$x$'' and ``$e$'' are just symbols.  Examples:
\begin{equation*}
	x^{-1}, \qquad
	x^{\pi}e^{x^{\sqrt{2}}-3x}, \qquad
	e^{\sum_{j=0}^\infty x^{-j}e^{x}}
\end{equation*}
Use
$x^{b_1}e^{L_1} \cdot x^{b_2}e^{L_2} = x^{b_1+b_2} e^{L_1+L_2}$
for the group operation ``multiplication''
and group identity $x^0 e^0 = 1$.
The ordering $\fgt$ (read ``far larger than'',
sometimes written $\gg$ instead)
is defined for $\G$ lexicographically:
$x^{b_1} e^{L_1} \fgt x^{b_2} e^{L_2}$ iff
$L_1 > L_2$ or \{$L_1 = L_2$ and $b_1 > b_2$\}.
Examples:
\begin{equation*}
	e^{\sum_{j=0}^\infty x^{-j}e^{x}} \fgt e^x
	\fgt x^{-3} e^x
	\fgt x^\pi \fgt x^{-1}
	\fgt x^{-5} \fgt x^{2008} e^{-x}
\end{equation*}
\item[(b)] A log-free \Def{transseries}
is a (possibly infinite) formal sum
$T = \sum_j c_j \g_j$, where the coefficients
$c_j$ are nonzero reals and the $\g_j$ are log-free transmonomials.
``Formal'' means that we want to contemplate the sum
as-is, not try to assign a ``value'' to it.
The sum could even be transfinite (indexed
by an ordinal), but for each
term $c_j \g_j$, the monomial $\g_j$
is far smaller than all previous terms.
Example:
\begin{equation*}
	-4 e^{\sum_{j=0}^\infty x^{-j}e^{x}}
	+ \sum_{j=0}^\infty x^{-j}e^{x}
	- 17 + \pi x^{-1}
\end{equation*}
Transseries are added termwise (even series of transseries, but
each monomial should occur
only a finite number of times, so we can collect them).
Transseries are multiplied in the way suggested
by the notation---``multiply it out''---but again we have
to make sure that each monomial occurs
in the product only a finite number of times.
The transseries $T=\sum c_j \g_j$ is \Def{purely large} iff $\g_j \fgt 1$ for
all terms $c_j \g_j$; and $T$ is \Def{small} iff
$g_j \fst 1$ for all terms $c_j \g_j$.
A nonzero transseries $T=\sum c_j \g_j$
has a \Def{dominant term} $c_0 \g_0$ with
$\g_0 \fgt \g_j$ for all other terms $c_j \g_j$.
If $c_0 > 0$ we say $T > 0$.
An ordering $>$ is then defined by: $S > T$ iff $S-T > 0$.

We consider only transmonomials and transseries of ``finite exponential
height''---so, for example, these are not allowed:
\begin{equation*}
	e^{\displaystyle e^{\scriptstyle e^{ e^{.^{.^.}}+x}+x}+x},
	\qquad
	x^{-1}+e^{-x}+e^{-e^{x}}+e^{-e^{e^x}}+\cdots .
\end{equation*}	

\item[(c)] \Def{Differentiation} is defined as in elementary
calculus:
\begin{equation*}
	\big(x^b e^L\big)' = b x^{b-1} e^L + x^b L' e^L,\qquad
	\left(\sum c_j \g_j\right)' = \sum c_j \g'_j
\end{equation*}
\item[(d)] Write $\log_m x$ for
$\log \log \cdots \log x$ with $m$ logs, where
$m$ is a nonnegative integer.  A general
transseries is obtained by substitution of some $\log_m x$
for $x$ in a log-free transseries.  Example:
\begin{equation*}
	e^{(\log\log x)^{1/2}+x} + (\log\log x)^{1/2} + x^{-2} 
\end{equation*}
A general transmonomial is obtained similarly  from
a log-free transmonomial.
\end{enumerate}
\medskip There are a few additional features in the development, as
we will see in Section~\ref{startformal}.  But
for now let's proceed to some \textit{examples.}
Computations with transseries can seem natural in many cases,
even without the technical definitions.  And---as with
generating functions---even if they do not converge.

\begin{ex}
Let us multiply $A = x-1$ times
$B = \sum_{j=0}^\infty x^{-j}$.
\begin{align*}
	(x& - 1)(1+x^{-1}+x^{-2}+x^{-3}+\dots)
	\\&= x\cdot(1+x^{-1}+x^{-2}+x^{-3}+\dots)
	- 1\cdot(1+x^{-1}+x^{-2}+x^{-3}+\dots)
	\\&= x + 1 + x^{-1} + x^{-2} + \cdots
	 -1 -x^{-1} -x^{-2} -x^{-3} -\cdots
	 \\&= x .
\end{align*}
\end{ex}

\begin{ex}
Both transseries
\begin{equation*}
	S = \sum_{j=0}^\infty j! x^{-j},\qquad
	T = \sum_{j=0}^\infty (-1)^j j! x^{-j}
\end{equation*}
are divergent.  For the product: the combinatorial identity
\begin{equation*}
	\sum_{j=0}^n (-1)^j j! (n-j)! =
	\begin{cases}
	 \displaystyle\frac{(n+1)!}{1+n/2}&\quad n \text{ even}
	 \\
	 0&\quad n \text{ odd} .
	\end{cases}
\end{equation*}
means that
\begin{equation*}
	ST = \sum_{j=0}^\infty \frac{(2j+1)!}{j+1}\,x^{-2j} .
\end{equation*}
\end{ex}

\begin{ex}
Now consider
\begin{equation*}
	U = \sum_{j=1}^\infty j e^{-jx},\qquad
	V = \sum_{k=0}^\infty x^{-k} .
\end{equation*}
When $UV$ is multiplied out, each monomial $x^{-k}e^{-jx}$
occurs only once, so our result is a transseries whose
support has order type $\omega^2$.
\begin{equation*}
	UV=\sum_{j=1}^\infty\left(\sum_{k=0}^\infty j x^{-k} e^{-jx}\right) .
\end{equation*}
(For an explanation of \Def{order type},
see \cite[p.~27]{hewitt} or \cite[p.~127]{suppes}
or even \cite{wiki}.)
\end{ex}

\begin{ex}\label{estart}
Every nonzero transseries has a multiplicative inverse.
What is the inverse of $e^x+x$?  Use the Taylor
series for $1/(1+z)$ like this:
\begin{align*}
	\big(e^x+x\big)^{-1} &= \big(e^x(1+xe^{-x})\big)^{-1}
	= e^{-x}\sum_{j=0}^\infty (-1)^j (xe^{-x})^j
	\\&=
	\sum_{j=0}^\infty (-1)^j x^j e^{-(j+1)x} .
\end{align*}
\end{ex}

\begin{ex}
The hyperbolic sine is a two-term transseries,
$\sinh x =(1/2)e^{x}-(1/2)e^{-x}$.  Let's compute
its logarithm.  Use the Taylor series for $\log(1-z)$.
\begin{equation*}
	\log(\sinh x) =
	\log\left(\frac{e^x}{2}(1-e^{-2x})\right)
	= x - \log 2 - \sum_{j=1}^\infty \frac{e^{-2jx}}{j} .
\end{equation*}
Wasn't that easy?
\end{ex}

\begin{ex}
How about the inverse of
\begin{equation*}
	T = \sum_{j=0}^\infty j! x^{-j-1}
	= x^{-1} + x^{-2} + 2x^{-3} + 6x^{-4} + 24 x^{-5} +\cdots \;?
\end{equation*}
We can compute as many terms as we want, with enough effort.
First, $T = x^{-1} (1+V)$, where
$V = x^{-1}+2x^{-2}+6x^{-3}+24x^{-4}+\cdots$ is small.
So
\begin{align*}
	T^{-1} &= (x^{-1})^{-1}(1+V)^{-1}
	= x\big(1-V+V^2-V^3+V^4-\dots\big)
	\\&= 
	x\bigg[1 - (x^{-1}+2x^{-2}+6x^{-3}+24x^{-4}+\dots)
	\\& \qquad+(x^{-1}+2x^{-2}+6x^{-3}+\dots)^2
	\\& \qquad-(x^{-1}+2x^{-2}+\dots)^3
	+(x^{-1}+\dots)^4+\dots\bigg]
	\\&= 
	x-1-x^{-1}-3x^{-2}-13x^{-3} 
	+\cdots .
\end{align*}
Searching the On-Line Encyclopedia of Integer Sequences \cite{sloane}
shows that these coefficients are sequence A003319.
\end{ex}

\begin{ex}\label{eend}
Function $x e^x$ has compositional inverse known
as the \Def{Lambert W function}.  So $W(x) e^{W(x)} = x$.
The transseries is:
\begin{align*}
W(x) =
& \log x  -\log \log x +
\frac {\log \log x }{\log x}
 +\frac{(\log  \log  x)^2}{2(\log x)^2}
 -\frac{\log \log x}{(\log x)^2}
 +\frac{(\log \log x)^3}{3(\log  x)^3}
 \\& -\frac{3( \log \log  x)^2}{2(\log  x)^3}
 +\frac{\log \log x }{(\log  x)^3}
 +\frac{(\log \log x)^4}{4(\log x)^4}
 -\frac{11(\log \log x)^3}{6(\log x)^4}
 +\cdots
\end{align*}
We will see below (Problem~\ref{lambertseries}) how to compute this.
But for now, let's see how to compute
$e^{W(x)}$.  The two terms $\log x$ and
$\log \log x$ are large, the rest is small.
If $W(x) = \log x - \log \log x + S$, then
\begin{equation*}
	e^{W(x)} = e^{\log x} e^{-\log \log x} e^S
	= \frac{x}{\log x}\left(\sum_{j=0}^\infty \frac{S^j}{j!}\right) .
\end{equation*}
Then put in $S = \log\log x/\log x + \cdots$, as many terms
as needed, to get
\begin{equation*}
	e^{W(x)} = \frac{x}{\log x} + \frac{x \log\log x}{(\log x)^2}
	+\frac{x (\log\log x)^2}{(\log x)^3}-\frac{x \log\log x}{(\log x)^3}
	+\cdots .
\end{equation*}
This is $e^{W(x)}$.
Now we can multiply this by the original $W$:
\begin{equation*}
	W(x) e^{W(x)} = x + \cdots
\end{equation*}
where the missing terms are of order higher than computed.
In fact, the claim is that all higher terms cancel.
\end{ex}

\begin{re}
By a general result of van den Dries--Macintyre--Marker
(3.12 and 6.30 in \cite{DMM}), there
exists a coherent way to associate a transseries expansion at $+\infty$
to every function $(a,+\infty) \to \R$ (where $a \in \R$) which,
like the functions considered in Examples \ref{estart} to~\ref{eend},
is \Def{definable} (in the sense of mathematical logic) from
real constants, addition, multiplication, and $\exp$.
\end{re}

\subsection*{\'Ecalle--Borel Summation}
There is a system to assign real functions to many transseries.
It is a vast generalization of the classical Borel summation
method.  Here we will consider transseries only as formal objects,
for the most part, but I could not resist including a few remarks
on summation.

The basic Borel summation works like this:  The Lapace transform
$\L$ is defined by
\begin{equation*}
	\L[F](x) = \int_0^\infty e^{-xp} F(p)\,dp ,
\end{equation*}
when it exists.  The inverse Laplace transform, or Borel
transform, will be written $\Bor$, so that
$\Bor[f] = F$ iff $\L[F] = f$.
The composition
$\L\Bor$ is an ``isomorphism'' in the sense that it
preserves ``all operations''---whatever that
means; perhaps in the wishful sense.  In fact,
in some cases even if $f$ is merely a formal series
(a divergent series), still $\L\Bor[f]$ yields
an actual function.  If so, that is the \Def{Borel sum}
of the series.

We will use variable $x$ in physical space, and variable $p$
in Borel space.  Then compute $\L[p^n] = n!x^{-n-1}$
for $n \in \N$, so
$\Bor[x^{-j}] = p^{j-1}/(j-1)!$ for integers $j\ge 1$.

\begin{ex}
Borel summation works on the series
$f=\sum_{j=0}^\infty 2^j x^{-j}$.  (Except for the first
term---no delta functions here.)
Write $f =1+g$.
First $\Bor[g] = \sum_{j=1}^\infty 2^jp^{j-1}/(j-1)! = 2e^{2p}$.
Then
\begin{equation*}
	\L\Bor[g](x) = \int_0^\infty 2 e^{-xp} e^{2p}\,dp = \frac{2}{x-2} .
\end{equation*}
Adding the $1$ back on, we conclude that the sum of the series
should be
\begin{equation*}
	1+\frac{2}{x-2} = \frac{x}{x-2} = \frac{1}{1-2/x}
\end{equation*}
as expected.

Of course the formal series $f=\sum_{j=0}^\infty 2^j x^{-j}$
satisfies $f\cdot(1-2/x) = 1$.  So if $\L\Bor$
is supposed to preserve all operations then
there is no other sum possible.
\end{ex}

\begin{ex}\label{eulerborel}
Consider Euler's series
$f = \sum_{j=0}^\infty (-1)^j j! x^{-j-1}$, a series
that diverges for all $x$.  So we want:
$\Bor[f] = \sum_{j=0}^\infty (-1)^j p^j = 1/(1+p)$.  This expression
makes sense for all $p \ge 0$, not just the ones within the
radius of convergence.  Then $\Bor[f]$ should be $1/(1+p)$.  The
the Laplace integral converges,
\begin{equation*}
	\L\Bor[f](x) = \int_0^\infty \frac{e^{-xp}\,dp}{1+p}
	= -e^x \Ei(-x) .
\end{equation*}
This is the Borel sum of the series $f$.

Similarly, consider the series
\begin{equation*}
	g = \sum_{j=0}^\infty \frac{j!}{x^{j+1}} .
\end{equation*}
In the same way, we get
\begin{equation*}
	\L\Bor[g](x) = \int_0^\infty \frac{e^{-xp}\,dp}{1-p}
\end{equation*}
where now (because of the pole at $p=1$) this
taken as a principal value integral, and we get
$e^{-x} \Ei(x)$ as the value.
\end{ex}

Borel summation is the beginning of the story.  Much more powerful
methods have been developed.  (\'Ecalle invented most of the
techniques, then others have made them rigorous and improved them.)
To a large extent
it is known that transseries that arise (from ODEs, PDEs, difference
equations, etc.) can be summed, and much more is suspected.
This summation is virtually as faithful as
convergent summation. But the subject
is beyond the scope of this paper.  In fact, it seems that
a simple exposition is not possible with our
present understanding.
For more on summation see \cite[\S3.1]{costintop},
\cite{costinglobal}, \cite{costinasymptotics}.

\section{The Formal Construction}\label{startformal}
Now we come to the technical part of the paper.  
\textit{All a mathematician needs to know
in order to apply transseries.}

To do the
types of computations we have seen, a formal construction
is desirable.  It should allow not only ``formal power series,''
but also exponentials and logarithms.  In reading this,
you can note that in fact we are not really using
high-level mathematics.  

Descriptions of the system
of transseries are found, for example, in
\cite{asch, costinasymptotics, DMM, hoeven}.
But those accounts are (to a greater or lesser extent) technical
and involve jargon of the subfield.  It is hoped that
by carefully reading this section,
a reader who is not a specialist will be
able to understand the simplicity of
the construction.  Some details
are not checked here, especially the tedious ones.

Items called \textit{Comment}, enclosed between two $\bigcirc$ signs,
are not part of the formal
construction.  They are included as illustration and
motivation.  Perhaps these commentaries cannot be completely understood
until after the formal construction has been read.

\begin{com}
Functions (or expressions) of the form $x^a e^{bx}$, where $a,b \in \R$,
are transmonomials.  (There are also many other transmonomials.
But these will be enough for most of our illustrative comments.)
We may think of the ``far larger'' relation $\fgt$ describing
relative size when $x \to +\infty$.  In particular,
$x^{a_1} e^{b_1x} \fgt x^{a_2}e^{b_2x}$ if and only if
$b_1>b_2$ or \{$b_1=b_2$ and $a_1 > a_2$\}.
\end{com}

\section*{3A\quad Multi-Indices}
\addcontentsline{toc}{section}{3A\quad Multi-Indices}%
\begin{com}
The set $\G$ of monomials is a group under multiplication.
This group (even the subgroup of monomials $x^a e^{bx}$) is not finitely
generated.  But sometimes we will want to consider
a finitely generated subgroup of $\G$.  If
$\mu_1,\cdots,\mu_n$ is a set of generators,
then the generated group is
\begin{equation*}
	\SET{\mu_1^{k_1} \mu_2^{k_2} \cdots \mu_n^{k_n} }{k_1,
	k_2, \cdots, k_n \in \Z} .
\end{equation*}
We will discuss the use of multi-indices
$\bk = (k_1,k_2,\dots,k_n)$ so that later
$\mu_1^{k_1} \mu_2^{k_2} \cdots \mu_n^{k_n}$ can be abbreviated
$\bmu^\bk$ and save much writing.

It does no harm to omit the group identity $1$ from a list
of generators; replacing some generators by their inverses,
we may assume the generators $\mu_j$ are all small:
$\mu_j \fst 1$.  (We will think of these as ``ratios'' between
one term of a series and the next.  A \Def{ratio set} is a finite
set of small monomials.)
Then the correspondence between multi-indices $\bk$
and monomials $\bmu^\bk$ reverses
the ordering.  (That is, if $\bk > \bp$, then
$\bmu^\bk \fst \bmu^\bp$.)
This means terminology that seems right on one
side may seem to be backward on the other side.
Even with conventional asymptotic series, larger terms
are written to the left, smaller terms to the right,
reversing the convention for a number line.
\end{com}

Begin with a positive integer $n$.  The set $\Z^n$ of
$n$-tuples of integers is a group under componentwise addition.
For notation---avoiding subscripts, since we want
to use subscripts for many other things---if $\bk \in \Z^n$
and $1 \le i \le n$, write $\bk[i]$ for the $i$th component
of $\bk$.  The partial order $\le$ is defined by:
$\bk \le \bp$ iff $\bk[i] \le \bp[i]$ for all $i$.  And
$\bk < \bp$ iff $\bk \le \bp$ and $\bk \ne \bp$.
Element $\0 = (0,0,\cdots,0)$ is the identity for addition.

\begin{de} 
For $\bm \in \Z^n$, define $\J_\bm = \SET{\bk \in \Z^n}{\bk \ge \bm}$.
\end{de}

\begin{com}
For example
$\J_{(-1,2)} = \SET{(k,l) \in \Z^2}{k \ge -1, l \ge 2}$.
The sets $\J_\bm$ will be used below (Definition~\ref{def.grid}) to define
``grids'' of monomials.  
If $\mu_1 = x^{-1}$ and $\mu_2 = e^{-x}$ are the ratios making
up the ratio set $\bmu$,
then
\begin{equation*}
	\SET{\bmu^\bk}{\bk \in \J_{(-1,2)}} =
	\SET{x^{-k}e^{-lx}}{k \ge -1, l \ge 2}
\end{equation*}
is the corresponding grid.
\end{com}

Write $\N = \{0,1,2,3,\cdots\}$ including $0$.  The subset
$\N^n$ of $\Z^n$ is closed under addition.
Note $\J_\bm$ is the translate of $\N^n$ by $\bm$.
That is, $\J_\bm = \SET{\bk+\bm}{\bk \in \N^n}$.
And $\N^n = \J_\0$.
Translation preserves order.

The next three propositions explain that
the set $\J_\bm$ is \Def{well-partially-ordered}
(also called \Def{Noetherian}).
These three---which are collectively known as
``Dickson's Lemma''---are the main reason why many an algorithm
in computer algebra (Gr\"obner bases) terminates.
Equivalent properties and the usefulness in power series rings
are discussed in Higman \cite{higman}
and Erd\H{o}s \& Rado (cited in \cite{higman}).

\begin{pr}\label{wellpartially}
If $\BE \subseteq \J_\bm$ and $\BE \ne \emptyset$, then there is
a minimal element:
$\bk_0 \in \BE$ and $\bk < \bk_0$ holds for no
element $\bk \in \BE$.
\end{pr}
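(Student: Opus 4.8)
The plan is to prove this by induction on the dimension $n$, since $\J_\bm$ is just a translate of $\N^n$ and translation preserves the order, so it suffices to treat the case $\bm = \0$, i.e.\ $\BE \subseteq \N^n$. For the base case $n = 1$, the set $\SET{\bk[1]}{\bk \in \BE}$ is a nonempty subset of $\N$, so it has a least element by the well-ordering of $\N$; the corresponding element of $\BE$ is minimal (indeed least).

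For the inductive step, assume the result for $n-1$ and let $\BE \subseteq \N^n$ be nonempty. First I would pick any element $\ba \in \BE$. The idea is that any $\bk \in \BE$ with $\bk \not\ge \ba$ must have $\bk[i] < \ba[i]$ for some coordinate $i$, so $\bk[i] \in \{0, 1, \dots, \ba[i]-1\}$. For each coordinate $i$ and each value $v \in \{0,\dots,\ba[i]-1\}$, consider the slice $\BE_{i,v} = \SET{\bk \in \BE}{\bk[i] = v}$; deleting the $i$th coordinate embeds $\BE_{i,v}$ into $\N^{n-1}$, so by the inductive hypothesis each nonempty slice has a minimal element. This gives finitely many ``candidate'' elements: $\ba$ itself, together with one minimal element from each of the finitely many nonempty slices $\BE_{i,v}$. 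Among this finite collection, choose one, call it $\bk_0$, that is minimal for the partial order $\le$ (a finite nonempty partially ordered set always has a minimal element).

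The final step is to check that $\bk_0$ is minimal in all of $\BE$. Suppose $\bk \in \BE$ with $\bk < \bk_0$. If $\bk \ge \ba$, then since $\bk < \bk_0$ we would need $\bk_0 > \ba$, contradicting that $\bk_0$ was chosen minimal among a collection containing $\ba$ (either $\bk_0 = \ba$, impossible since $\bk < \ba \le \bk_0$ forces $\bk < \bk_0$ but also $\bk \ge \ba = \bk_0$; or $\bk_0$ was compared against $\ba$ in the finite set and found to be $\le$-minimal, so $\bk_0 \not> \ba$). Otherwise $\bk \not\ge \ba$, so $\bk[i] < \ba[i]$ for some $i$, hence $\bk$ lies in the slice $\BE_{i,\bk[i]}$, which has a minimal element $\bk' $ in our candidate collection with $\bk' \le \bk < \bk_0$; but then $\bk' < \bk_0$ contradicts the $\le$-minimality of $\bk_0$ in the finite collection. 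In both cases we reach a contradiction, so no such $\bk$ exists.

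The main obstacle is purely bookkeeping: being careful that the finitely many slices genuinely capture \emph{every} element not dominating $\ba$, and that ``minimal in a finite set'' combined with ``minimal in each slice'' really does propagate to ``minimal in $\BE$.'' There is no deep idea beyond Dickson's Lemma itself; the one subtlety worth stating cleanly is that we only need the \emph{existence} of a minimal element, not a least element, which is what makes the finite-candidate argument go through smoothly.
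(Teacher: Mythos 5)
The inductive step has a genuine gap at the sentence claiming "$\bk$ lies in the slice $\BE_{i,\bk[i]}$, which has a minimal element $\bk'$ in our candidate collection with $\bk' \le \bk < \bk_0$." A minimal element of a slice is \emph{not} in general a least element, so there is no reason $\bk' \le \bk$ should hold: $\bk$ and $\bk'$ may simply be incomparable. In dimension $n=2$ the slices project into $\N^1$, which is totally ordered, so there minimal does coincide with least and the argument silently works; but starting at $n=3$ the slices are genuinely partially ordered and the step fails. Concretely, take $n=3$ and
\begin{equation*}
\BE=\{(2,2,2),\ (0,1,1),\ (0,0,5),\ (5,1,0),\ (5,0,1),\ (1,1,1)\},
\qquad \ba=(2,2,2).
\end{equation*}
Tracing your construction with the (perfectly legal) choices of one minimal element per nonempty slice given by $(0,0,5)$ for $\BE_{1,0}$, $(1,1,1)$ for $\BE_{1,1}$, $(5,0,1)$ for $\BE_{2,0}$, $(5,1,0)$ for $\BE_{2,1}$, $(5,1,0)$ for $\BE_{3,0}$, and $(5,0,1)$ for $\BE_{3,1}$, the candidate set is $\{(2,2,2),(0,0,5),(1,1,1),(5,0,1),(5,1,0)\}$, in which $(1,1,1)$ is minimal. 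But $(0,1,1)\in\BE$ and $(0,1,1)<(1,1,1)$, so $\bk_0=(1,1,1)$ is not minimal in $\BE$. The element $(0,1,1)$ escapes your net precisely because every slice containing it has its chosen minimal element incomparable to it.

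To salvage the slicing idea you would need every element of a slice to lie above some \emph{candidate}, i.e.\ you would need to put \emph{all} minimal elements of each slice into the finite collection and then know that this set is finite---but that finiteness is exactly Proposition~\ref{magfin}, which the paper proves \emph{after} (and using) the present statement. The paper's own proof avoids the issue entirely by iterating on single coordinates: take the least first coordinate $m_1$ occurring in $\BE$, then among elements with first coordinate $m_1$ take the least second coordinate $m_2$, and so on; the resulting $\bk_0=(m_1,\dots,m_n)$ is the lexicographic minimum and is automatically minimal for $\le$, since any $\bk<\bk_0$ would be forced, coordinate by coordinate, to equal $\bk_0$. That route uses only the well-ordering of $\N$ at each stage and never needs to reason about incomparable minimal elements, which is where your argument breaks.
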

\begin{proof}
Because translation preserves order, it suffices to
do the case of $\J_\0 = \N^n$.
First, $\SET{\bk[1]}{\bk \in \BE}$ is a nonempty subset
of $\N$, so it has a least element, say $m_1$.  Then
$\SET{\bk[2]}{\bk \in \BE, \bk[1]=m_1}$ is a nonempty subset
of $\N$, so it has a least element, say $m_2$.
Continue.  Then $\bk_0 = (m_1,\cdots, m_n)$ is minimal
in $\BE$.
\end{proof}

\begin{pr}\label{increasing}
Let $\BE \subseteq \J_\bm$ be infinite.  Then there is
a sequence $\bk_j \in \BE$, $j \in \N$, with
$\bk_0 < \bk_1 < \bk_2 < \cdots$.
\end{pr}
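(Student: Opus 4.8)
The plan is to build the increasing sequence greedily, using Proposition~\ref{wellpartially} repeatedly as a source of minimal elements, together with the observation that in $\J_\bm$ (equivalently $\N^n$ after translation) every element has only finitely many predecessors, so a minimal element cannot absorb an infinite set.

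First I would reduce to $\J_\0 = \N^n$ by translation, since translation is an order isomorphism. Then I would argue as follows. Given an infinite subset $\BF \subseteq \N^n$, I claim it has a minimal element $\bk$ such that $\SET{\bp \in \BF}{\bp \ge \bk}$ is still infinite. Indeed, the set of minimal elements of $\BF$ is finite: by Proposition~\ref{wellpartially} it is nonempty, and if it were infinite one could extract from it an infinite antichain, which is impossible in $\N^n$ — this last point is itself essentially Proposition~\ref{wellpartially} applied to tails, or can be seen directly by induction on $n$. Since $\BF$ is the union of the finitely many ``up-sets'' $\SET{\bp \in \BF}{\bp \ge \bk}$ over minimal $\bk$ (every element of $\BF$ dominates some minimal element), at least one of these up-sets is infinite; pick that $\bk$ as $\bk_0$.

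Next I would iterate: set $\BF_0 = \BF$, choose $\bk_0$ as above with $\BF_1 := \SET{\bp \in \BF_0}{\bp \ge \bk_0}$ infinite, then note $\BF_1 \setminus \{\bk_0\}$ is still infinite (removing one point), apply the same argument to it to get $\bk_1 > \bk_0$ with an infinite up-set $\BF_2$, and continue. This produces $\bk_0 < \bk_1 < \bk_2 < \cdots$ in $\BF$ as desired. (After undoing the translation we get the sequence in $\BE \subseteq \J_\bm$.)

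The main obstacle is the supporting claim that $\N^n$ contains no infinite antichain — equivalently, that the minimal elements of any subset form a finite set. I expect to handle this by induction on $n$: for $n=1$ a minimal element is unique; for the inductive step, given infinitely many pairwise incomparable elements, fix one of them, say with first coordinate $a$; any other element incomparable to it, when it has first coordinate $\ge a$, must have some later coordinate strictly smaller, and there are only finitely many values below $a$ in the first coordinate, so one reduces to finitely many copies of the $(n-1)$-dimensional case, contradicting the induction hypothesis. Alternatively, and more cleanly, I could avoid the antichain claim entirely by a direct induction on $n$ building the increasing sequence coordinate by coordinate; but the minimal-element approach keeps the argument close in spirit to the proof of Proposition~\ref{wellpartially} just given, so I would present it that way.
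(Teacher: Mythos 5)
Your proposal is correct in outline, but it takes a genuinely different route from the paper, and it contains one misstatement you should repair.

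The paper proves Proposition~\ref{increasing} directly by induction on $n$: project $\BE$ onto the first $n-1$ coordinates and split into two cases according to whether the projection is finite or infinite; the finiteness of $\Min\BE$ (Proposition~\ref{magfin}) is then derived afterwards \emph{from} Proposition~\ref{increasing}. You reverse this dependency: you first establish, by a separate induction on $n$, that $\N^n$ contains no infinite antichain (equivalently, that $\Min\BF$ is finite for every $\BF\subseteq\N^n$), and then run a greedy construction --- pick a minimal element whose up-set in $\BF$ is infinite, delete it, recurse. The greedy step itself is sound: $\Min\BF$ is finite and nonempty, every element of $\BF$ lies above some element of $\Min\BF$ (descending chains in $\N^n$ terminate because each element has only finitely many predecessors), so $\BF$ is a finite union of up-sets and one of them is infinite; removing the chosen minimal point keeps it infinite and forces the next choice to be strictly larger. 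This is a legitimate alternative decomposition of Dickson's Lemma. The cost of your route is that you must prove the no-infinite-antichain fact yourself rather than inherit it, since in the paper's order of presentation you cannot invoke Proposition~\ref{magfin} here without circularity --- which you do acknowledge.

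The thing to fix: your claim that the absence of infinite antichains in $\N^n$ is ``essentially Proposition~\ref{wellpartially} applied to tails'' is not right. Proposition~\ref{wellpartially} is a descending-chain condition (every nonempty subset has a minimal element), and DCC alone does not exclude infinite antichains: a poset consisting of a bottom element below infinitely many pairwise incomparable atoms satisfies DCC and has an infinite antichain. Only your second justification, the direct induction on $n$, actually carries the argument. And in that induction, ``one reduces to finitely many copies of the $(n-1)$-dimensional case'' should really be a pigeonhole step landing on a \emph{single} infinite antichain in $\N^{n-1}$: fix $\bk_0$ in the supposed antichain; every other member has some coordinate strictly below the corresponding coordinate of $\bk_0$; there are only finitely many (coordinate, value) possibilities, so one occurs infinitely often; freeze that coordinate and project away to get an infinite antichain in $\N^{n-1}$, contradicting the induction hypothesis. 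With that one repair your proof stands.
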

\begin{proof}
It is enough to do the case $\N^n$.
The proof is by induction on $n$---it is true for $n=1$.
Assume $n \ge 2$.
Define the set $\widetilde{\BE} \subseteq \Z^{n-1}$ by
\begin{equation*}
	\widetilde{\BE} = \SET{(\bk[1],\bk[2],\cdots,\bk[n-1])}{\bk \in \BE} .
\end{equation*}
\textit{Case 1.\/} $\widetilde{\BE}$ is finite.  Then for some
$\bp \in \widetilde{\BE}$, the set
\begin{equation*}
	\BE' = \SET{k \in \N}{(\bp[1],\cdots,\bp[n-1],k) \in \BE}
\end{equation*}
is infinite.
Choose an increasing sequence $k_j \in \BE'$ to get the
increasing sequence in $\BE$.

\textit{Case 2.\/} $\widetilde{\BE}$ is infinite.  By the
induction hypothesis, there
is a strictly increasing sequence $\bp_j \in \widetilde{\BE}$.
So there is a sequence $\bk_j \in \BE$ that is increasing
in every coordinate except possibly the last.  If some
last coordinate occurs infinitely often, use it to get
an increasing sequence in $E$.  If not, choose a subsequence
of these last coordinates that increases.
\end{proof}

\begin{pr}\label{magfin}
Let $\BE \subseteq \J_\bm$.  Then the set $\Min \BE$ of all minimal
elements of $\BE$ is finite.  For every $\bk \in \BE$,
there is $\bk_0 \in \Min \BE$ with $\bk_0 \le \bk$.
\end{pr}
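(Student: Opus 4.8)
The plan is to derive both assertions from the two preceding propositions; this proposition is really just a repackaging of Propositions~\ref{wellpartially} and~\ref{increasing} into the statement that $\J_\bm$ is well-partially-ordered.

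First I would prove the second claim. Given $\bk \in \BE$, consider the ``downward slice'' $\BE_\bk = \SET{\bp \in \BE}{\bp \le \bk}$. It is nonempty, since $\bk \in \BE_\bk$, and it is a subset of $\J_\bm$, so Proposition~\ref{wellpartially} supplies an element $\bk_0 \in \BE_\bk$ that is minimal in $\BE_\bk$. I would then upgrade this to minimality in all of $\BE$: if some $\bp \in \BE$ had $\bp < \bk_0$, then $\bp < \bk_0 \le \bk$ would force $\bp \in \BE_\bk$, contradicting the minimality of $\bk_0$ in $\BE_\bk$. Hence $\bk_0 \in \Min\BE$ and $\bk_0 \le \bk$, as required. (If $\BE = \emptyset$ the claim is vacuous.)

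For the finiteness of $\Min\BE$ I would argue by contradiction. Suppose $\Min\BE$ were infinite. Since $\Min\BE \subseteq \J_\bm$, Proposition~\ref{increasing} would produce a strictly increasing sequence $\bk_0 < \bk_1 < \bk_2 < \cdots$ of elements of $\Min\BE$. But then $\bk_1 \in \BE$ with $\bk_0 < \bk_1$ contradicts the fact that $\bk_1$ is a minimal element of $\BE$ --- indeed, distinct minimal elements are pairwise incomparable, so $\Min\BE$ is an antichain, and an infinite antichain in $\J_\bm$ is ruled out by Proposition~\ref{increasing}. Therefore $\Min\BE$ is finite.

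There is essentially no obstacle; the only points requiring care are bookkeeping ones --- keeping the order relation pointed the right way (we want \emph{minimal}, not maximal, elements) and checking that ``minimal in the downward slice $\BE_\bk$'' genuinely implies ``minimal in $\BE$''. All the real content has already been extracted in Propositions~\ref{wellpartially} and~\ref{increasing}.
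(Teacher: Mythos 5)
Your proof is correct and follows essentially the same route as the paper's: finiteness of $\Min\BE$ comes from observing that minimal elements form an antichain, which cannot be infinite by Proposition~\ref{increasing}; and the second claim comes from Proposition~\ref{wellpartially}. You are in fact a bit more explicit than the paper on the second claim --- the paper merely notes that $\Min\BE \ne \emptyset$ when $\BE \ne \emptyset$, whereas you spell out the ``downward slice'' argument showing that every $\bk \in \BE$ lies above some minimal element, which is the missing detail needed to fully justify that part of the statement.
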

\begin{proof}
No two minimal elements are comparable, so
$\Min \BE$ is finite by Proposition~\ref{increasing}.
If $\BE = \emptyset$, then $\Min \BE = \emptyset$ vacuously
satisfies this.  Suppose $\BE \ne \emptyset$.  Then
$\Min \BE \ne \emptyset$ satisfies the required conclusion
by Proposition~\ref{wellpartially}.
\end{proof}

\subsection*{Convergence of sets}

Write $\triangle$ for the symmetric difference operation on
sets.  We will define convergence of a sequence of sets
$\BE_j \subseteq \Z^n$ (or indeed any infinite collection $(\BE_i)_{i \in I}$
of sets).

\begin{de}
Let $I$ be an infinite index set, and for each $i \in I$, let
$\BE_i \subseteq \Z^n$ be given.  We say the family $(\BE_i)_{i \in I}$
is \Def{point-finite} iff each $\bp \in \Z^n$
belongs to $\BE_i$ for only finitely many $i$.
Let $\bm \in \Z^n$.  We write
$\BE_i \mto \emptyset$ iff $\BE_i \subseteq \J_\bm$ for all
$i$ and $(\BE_i)$ is point-finite.  We write
$\BE_i \to \emptyset$ iff there exists $\bm$ such that
$\BE_i \mto \emptyset$.  Furthermore, write
$\BE_i \mto \BE$ iff $\BE_i \subseteq \J_\bm$ for all
$i$ and $\BE_i \triangle \BE \mto \emptyset$;
and write $\BE_i \to \BE$ iff $\BE_i \mto \BE$ for some $\bm$.
\end{de}

\begin{com}
Examples in $\Z = \Z^1$.  Let
$\BE_i = \{i, i+1, i+2, \cdots \}$ for $i \in \N$.
Then the sequence $\BE_i$ is point-finite.
And $\BE_i \to \emptyset$.  But let
$\BF_i = \{-i\}$ for $i \in \N$.  Again the sequence
$\BF_i$ is point-finite, but there is no $\bm$ with
$\BF_i \subseteq \J_\bm$ for all $i$, so
$\BF_i$ does not converge in this sense.
\end{com}

This type of convergence is metrizable when restricted to any
$\J_\bm$.

\begin{no}\label{Znmetric}
For $\bk = (k_1,k_2,\cdots,k_n)$, define
$|\bk| = k_1 + k_2 + \dots + k_n$.
\end{no}

\begin{pr}
Let $\bm \in \Z^n$.  For $\BE, \BF \subseteq \J_\bm$, define
\begin{equation*}
	d(\BE,\BF) = \sum_{\bk \in \BE\triangle \BF} 2^{-|\bk|}.
\end{equation*}
Then for any sets $\BE_i \subseteq \J_\bm$, we have
$\BE_i \to \BE$ if and only if $d(\BE_i,\BE) \to 0$.
And $d$ is a metric on subsets of $\J_\bm$.
\end{pr}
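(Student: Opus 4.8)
The plan is to verify the two claims separately: first that $d$ is a well-defined metric on the subsets of $\mathfrak{J}_\bm$, and then that convergence in the $d$-metric coincides with the combinatorial notion $\BE_i \mto \BE$ introduced in the Definition. For the first claim, the only point needing care is that $d(\BE,\BF)$ is actually finite: since $\BE,\BF \subseteq \mathfrak{J}_\bm$, every $\bk$ appearing in the sum satisfies $\bk \ge \bm$, hence $|\bk| \ge |\bm|$, and more importantly $\bk = \bm + \bq$ with $\bq \in \N^n$, so $\sum_{\bk \in \mathfrak{J}_\bm} 2^{-|\bk|} = 2^{-|\bm|} \sum_{\bq \in \N^n} 2^{-|\bq|} = 2^{-|\bm|} \prod_{i=1}^n \sum_{q=0}^\infty 2^{-q} = 2^{-|\bm|} \cdot 2^n < \infty$. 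Thus $d$ takes values in $[0, 2^{n-|\bm|}]$. Symmetry of $d$ is immediate from symmetry of $\triangle$; $d(\BE,\BF)=0$ forces $\BE \triangle \BF = \emptyset$, i.e.\ $\BE = \BF$, since each summand $2^{-|\bk|}$ is strictly positive; and the triangle inequality follows from the set-theoretic inclusion $\BE \triangle \BG \subseteq (\BE \triangle \BF) \cup (\BF \triangle \BG)$ together with the observation that $d(\BE,\BF) + d(\BF,\BG) = \sum_{\bk} 2^{-|\bk|}(\mathbf{1}_{\BE \triangle \BF}(\bk) + \mathbf{1}_{\BF \triangle \BG}(\bk)) \ge \sum_{\bk} 2^{-|\bk|}\mathbf{1}_{(\BE\triangle\BF)\cup(\BF\triangle\BG)}(\bk) \ge d(\BE,\BG)$.

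For the equivalence, it suffices, after replacing each $\BE_i$ by $\BE_i \triangle \BE$, to prove: for a family $(\BD_i)$ of subsets of $\mathfrak{J}_\bm$, one has $\BD_i \mto \emptyset$ (that is, the family is point-finite) if and only if $d(\BD_i, \emptyset) = \sum_{\bk \in \BD_i} 2^{-|\bk|} \to 0$. Here I am using that $\BE_i \mto \BE$ means $\BE_i \subseteq \mathfrak{J}_\bm$ and $\BE_i \triangle \BE \mto \emptyset$, and that $d(\BE_i,\BE) = d(\BD_i, \emptyset)$ where $\BD_i = \BE_i \triangle \BE$; one also needs $\BE \subseteq \mathfrak{J}_\bm$, which follows since $\BE_i$ and $\BE_i \triangle \BE$ both lie in $\mathfrak{J}_\bm$ (more precisely, if $d(\BE_i,\BE) \to 0$ then $\BE \subseteq \mathfrak{J}_\bm$ automatically, as any point of $\BE \setminus \mathfrak{J}_\bm$ would lie in $\BE_i \triangle \BE$ for all large $i$, contradicting $d \to 0$; I would state this carefully at the start). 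The ``point-finite $\Rightarrow$ $d \to 0$'' direction is the substantive one: fix $\epsilon > 0$, and using the convergence of $\sum_{\bk \in \mathfrak{J}_\bm} 2^{-|\bk|}$ choose a finite set $\BF \subseteq \mathfrak{J}_\bm$ with $\sum_{\bk \in \mathfrak{J}_\bm \setminus \BF} 2^{-|\bk|} < \epsilon$; by point-finiteness each of the finitely many points of $\BF$ lies in $\BD_i$ for only finitely many $i$, so for all but finitely many $i$ we have $\BD_i \cap \BF = \emptyset$, whence $d(\BD_i,\emptyset) \le \sum_{\bk \in \mathfrak{J}_\bm \setminus \BF} 2^{-|\bk|} < \epsilon$. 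Conversely, if $d(\BD_i, \emptyset) \to 0$, then for each fixed $\bp \in \Z^n$ the condition $\bp \in \BD_i$ forces $d(\BD_i,\emptyset) \ge 2^{-|\bp|}$, which can hold for only finitely many $i$; hence the family is point-finite.

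The main obstacle is not any single deep step but rather the bookkeeping around the two-parameter structure: the family convergence is only defined relative to a fixed $\bm$, and one must be consistent about reducing $\BE_i \mto \BE$ to $\BD_i \mto \emptyset$ while checking that $\BE$ itself stays inside $\mathfrak{J}_\bm$ so that $d(\BE_i,\BE)$ is even defined. Once that reduction is cleanly set up, everything rests on the single analytic fact that $\sum_{\bk \in \mathfrak{J}_\bm} 2^{-|\bk|}$ converges, which I would isolate as the first lemma of the proof, since it simultaneously gives finiteness of $d$ and supplies the tail estimate driving the ``point-finite $\Rightarrow$ $d \to 0$'' implication.
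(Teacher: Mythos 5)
The paper states this proposition without proof, so there is no argument of the paper's to compare against; your proof is a correct supplement.

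A few points worth noting, all of which you handle correctly. The key analytic fact is the convergence $\sum_{\bk \in \J_\bm} 2^{-|\bk|} = 2^{n-|\bm|}$, obtained by writing $\bk = \bm + \bq$ with $\bq \in \N^n$ so that $|\bk| = |\bm| + |\bq|$ and the sum factors over coordinates; isolating this as a lemma is good structure, since it simultaneously gives finiteness of $d$ and the tail estimate for the forward implication. The metric axioms are routine (the triangle inequality via $\BE \triangle \mathbf G \subseteq (\BE \triangle \BF) \cup (\BF \triangle \mathbf G)$). For the equivalence, reducing $\BE_i \to \BE$ to $\BD_i = \BE_i \triangle \BE \to \emptyset$ is exactly the right move; you also correctly flag the bookkeeping point that $\BE \subseteq \J_\bm$ must hold for $d(\BE_i, \BE)$ to even be defined, and that it follows in either direction of the equivalence (from point-finiteness of $\BE_i \triangle \BE$, or directly from $d \to 0$, because $\BE \setminus \J_\bm \subseteq \BE_i \triangle \BE$ for every $i$ since $\BE_i \subseteq \J_\bm$). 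The forward direction (point-finite $\Rightarrow d \to 0$) via a finite tail set $\BF$, and the converse via the trivial bound $d(\BD_i, \emptyset) \ge 2^{-|\bp|}$ when $\bp \in \BD_i$, are both correct and match the natural interpretation of $d(\BE_i, \BE) \to 0$ for an arbitrary infinite index family (namely, $\{i : d(\BE_i, \BE) \ge \epsilon\}$ finite for each $\epsilon > 0$).
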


\section*{3B\quad Hahn Series}
\addcontentsline{toc}{section}{3B\quad Hahn Series}%
We begin with an ordered abelian group $\MM$, called
the \Def{monomial group} (or \Def{valuation group}).
By ``ordered'' we mean totally ordered or linearly ordered.
The operation is written multiplicatively, the identity is $1$,
the order relation is $\fgt$ and read ``far larger than''.  This
is a ``strict'' order relation; that is, $\g \fgt \g$ is false.
An element $\g \in \MM$ is called \Def{large} iff $\g \fgt 1$,
and \Def{small} if $\g \fst 1$.
[We will use Fraktur letters:
lower case for monomials and upper case for sets of monomials.]

\begin{com}
The material in Subsections~\SUBSECformalseries\ and~\SUBSECgenerators\ 
will apply to any ordered abelian group $\MM$.
Later (Subsections~\SUBSECtsinf\ and~\SUBSECwithlogs) we
will construct the particular group that will
specialize this general construction into the transseries
construction.  Comments will use the group of monomials
$x^a e^{bx}$ discussed above.
\end{com}

We use the field $\R$ of real numbers for values.
Write $\R^\MM$ for the set of functions $T \takes \MM \to \R$.
For $T \in \R^\MM$ and $\g \in \MM$, we will use square brackets
$T[\g]$ for the value of $T$ at $\g$, because later we will want to
use round brackets $T(x)$ in another more common sense.

\begin{de}
The \Def{support} of a function $T \in \R^\MM$
is
\begin{equation*}
	\supp T = \SET{\g \in \MM}{T[\g] \ne 0}.
\end{equation*}
Let $\AA \subseteq \MM$.  We say $T$ is \Def{supported by} $\AA$
if $\supp T \subseteq \AA$.
\end{de}

\begin{no}
In fact, $T$ will usually be written as a \Def{formal combination of
group elements}.  That is:
\begin{equation*}
	T = \sum_{\g \in \AA} a_\g \g,\qquad a_\g \in \R
\end{equation*}
will be used for the function $T$ with $T[\g] = a_\g$ for
$\g \in \AA$ and $T[\g] = 0$ otherwise.
The set $\AA$ might or might not be the actual support of $T$.
Accordingly, such $T$ may be called a \Def{Hahn series}
or \Def{generalized power series}.
\end{no}

\begin{de}
If $c \in \R$, then $c\,1 \in \R^\MM$ is called a \Def{constant}
and identified with $c$.  (That is, $T[1] = c$ and
$T[\g] = 0$ for all $\g \ne 1$.)
If $\m \in \MM$, then $1\,\m \in \R^\MM$ is called a \Def{monomial}
and identified with $\m$.  (That is, $T[\m] = 1$ and $T[\g]=0$
for all $\g \ne \m$.)
\end{de}

In all cases of interest to us, the support will be
\Def{well ordered} (according to the converse of $\fgt$).  That
is, for all $\AA \subseteq \supp(T)$, if $\AA \ne \emptyset$,
it has a maximum: $\m \in \AA$ such that for all $\g \in \AA$,
if $\g \ne \m$, then $\m \fgt \g$.

\begin{pr}
Let $\AA \subseteq \MM$ be well ordered for the converse of \,$\fgt$.
Every infinite subset in $\AA$ contains an infinite strictly
decreasing sequence $\g_1 \fgt \g_2 \fgt \cdots$.  There is no
infinite strictly increasing sequence in $\AA$.
\end{pr}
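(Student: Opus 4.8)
The plan is to derive both assertions directly from the standing hypothesis on $\AA$: every nonempty subset of $\AA$ has a $\fgt$-maximum. The first assertion will come from repeatedly peeling off maxima; the second is a one-line contradiction.

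For the first assertion, let $\BB \subseteq \AA$ be infinite. I would build the decreasing sequence recursively. Put $\g_1 = \max \BB$; having chosen $\g_1 \fgt \dots \fgt \g_k$ in $\BB$, put $\g_{k+1} = \max\bigl(\BB \setminus \{\g_1,\dots,\g_k\}\bigr)$. At each stage the set $\BB \setminus \{\g_1,\dots,\g_k\}$ is nonempty---indeed infinite, since $\BB$ is infinite and we have removed only finitely many points---so the maximum exists by hypothesis. Moreover $\g_{k+1}$ lies in $\BB \setminus \{\g_1,\dots,\g_k\} \subseteq \BB \setminus \{\g_1,\dots,\g_{k-1}\}$ and differs from $\g_k = \max\bigl(\BB \setminus \{\g_1,\dots,\g_{k-1}\}\bigr)$, whence $\g_k \fgt \g_{k+1}$. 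Thus $\g_1 \fgt \g_2 \fgt \cdots$ is an infinite strictly decreasing sequence contained in $\BB$. (This construction uses the axiom of dependent choice, which seems unavoidable here.)

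For the second assertion, suppose $\g_1 \fst \g_2 \fst \g_3 \fst \cdots$ were an infinite strictly increasing sequence in $\AA$. Its range $\BB = \SET{\g_j}{j \in \N}$ is a nonempty subset of $\AA$, so it has a $\fgt$-maximum, say $\g_N$. But $\g_{N+1} \in \BB$ with $\g_N \fst \g_{N+1}$, i.e.\ $\g_{N+1} \fgt \g_N$, contradicting the maximality of $\g_N$. Hence no infinite strictly increasing sequence exists.

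I do not expect a genuine obstacle: this is essentially the elementary equivalence between well-orderedness and the absence of ascending chains. The two points that need care are the bookkeeping with order direction (the hypothesis supplies $\fgt$-maxima, ``decreasing'' refers to $\fgt$ while ``increasing'' refers to $\fst$) and the trivial but essential fact, used to keep the recursion in the first part running forever, that an infinite set remains infinite after removal of finitely many elements.
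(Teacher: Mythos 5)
Your proof is correct, and the paper in fact leaves this proposition without proof, so there is nothing to compare against; the argument you give is the natural one. Both halves are sound: the recursion peeling off maxima from a set that stays infinite, and the one-line contradiction for an ascending chain.

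One small aside worth flagging: the parenthetical claim that the construction requires dependent choice is not right. At each stage $\g_{k+1} = \max\bigl(\BB \setminus \{\g_1,\dots,\g_k\}\bigr)$ is \emph{uniquely} determined by the hypothesis (a $\fgt$-maximum is unique), so the sequence $(\g_k)$ is defined by ordinary recursion on $\N$ with no choice principle invoked. Dependent choice would be needed only if the recursion step offered a genuine, non-canonical selection; here it does not.
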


\begin{de}\label{magdef}
Let $T \ne 0$ be
\begin{equation*}
	T = \sum_{\g \in \AA} a_\g \g,\qquad a_\g \in \R ,
\end{equation*}
with $\m \in \AA$,
$\m \fgt \g$ for all other $\g \in \AA$, and $a_{\m} \ne 0$.
Then the \Def{magnitude} of $T$ is $\mag T = \m$,
the \Def{leading coefficient} of $T$ is $a_{\m}$, and
the \Def{dominance} of $T$ is $\dom T = a_{\m}\m$.
We say $T$ is \Def{positive} if $a_{\m} > 0$ and write $T > 0$.
We say $T$ is \Def{negative} if $a_{\m} < 0$ and write $T < 0$.
We say $T$ is \Def{small} if $\g \fst 1$ for all $\g \in \supp T$
(equivalently: $\mag T \fst 1$  or $T = 0$).
We say $T$ is \Def{large} if $\mag T \fgt 1$.
We say $T$ is \Def{purely large} if $\g \fgt 1$ for all $\g \in \supp T$.
(Because of the standard empty-set conventions:
$0$, although not large, is purely large.)
\end{de}

\begin{re}
Alternate terminology \cite{hoeven,schmeling}:
magnitude = leading monomial;
dominance = leading term;
large = infinite;
small = infinitesimal.
\end{re}

\begin{com}
Let $A = -3e^x + 4 x^2$.  Then $\mag A = e^x$,
$\dom A = -3e^x$, $A$ is negative, $A$ is large,
$A$ is purely large.
\end{com}

\begin{de}
\Def{Addition} is defined by components:  $(A+B)[\g] = A[\g] + B[\g]$.
\Def{Constant multiples} $c A$ are also defined by components.
\end{de}

\begin{re}
The union of two well ordered sets is well ordered.  So
if $A, B$ each have well ordered support, so does $A+B$.
\end{re}

\begin{no}
We say $S > T$ if $S-T > 0$.  For nonzero $S$ and $T$
we say $S \fgt T$ (read $S$ is \Def{far larger than} $T$) iff
$\mag S \fgt \mag T$;
we say $S \cto T$ (read $S$ is \Def{comparable to} $T$
or $S$ has the \Def{same magnitude} as $T$)
iff $\mag S = \mag T$;
and we say $S \ato T$ (read $S$ is \Def{asymptotic to} $T$)
iff $\dom S = \dom T$.  Write $S \fgte T$ iff
$S \fgt T$ or $S \cto T$.
\end{no}

\begin{comnoc}
Examples:
\begin{align*}
	-3e^x + 4 x^2 &< x^9 ,\\
	-3e^x + 4 x^2 &\fgt x^9 ,\\
	-3e^x + 4 x^2 &\cto 7e^x+x^9 ,\\
	-3e^x + 4 x^2 &\ato -3e^x+x^9 .
	\qquad\qquad\qquad\qquad\bigcirc
\end{align*}
\end{comnoc}

\subsection*{The Two Canonical Decompositions}

\begin{pr}[Canonical Additive Decomposition]\label{C_add}
Every $A$ may be written uniquely in the form
$A = L + c + S$, where $L$ is purely large,
$c$ is a constant, and $S$ is small.
\end{pr}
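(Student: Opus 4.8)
The plan is to decompose the support of $A$ according to whether each monomial $\g$ satisfies $\g \fgt 1$, $\g = 1$, or $\g \fst 1$. Since $\MM$ is totally ordered, these three cases are exhaustive and mutually exclusive, so $\supp A$ partitions as $\AA_{>} \cup \AA_{=} \cup \AA_{<}$, where $\AA_{>} = \SET{\g \in \supp A}{\g \fgt 1}$, $\AA_{=} = \supp A \cap \{1\}$, and $\AA_{<} = \SET{\g \in \supp A}{\g \fst 1}$. Define $L = \sum_{\g \in \AA_{>}} A[\g]\,\g$, let $c = A[1]$ (the constant, possibly zero), and $S = \sum_{\g \in \AA_{<}} A[\g]\,\g$. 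Each of these is a legitimate element of $\R^\MM$ since its support is a subset of $\supp A$, hence well ordered; and $L + c + S = A$ holds componentwise by construction. By Definition~\ref{magdef}, $L$ is purely large (its support consists of monomials $\fgt 1$, or is empty), $S$ is small, and $c$ is a constant.

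For uniqueness, suppose $A = L + c + S = L' + c' + S'$ with $L, L'$ purely large, $c, c'$ constants, and $S, S'$ small. Rearranging gives $(L - L') + (c - c') = S' - S$. The left side is a sum of a purely large series and a constant, so its support is contained in $\SET{\g}{\g \fgte 1}$; the right side is small, so its support is contained in $\SET{\g}{\g \fst 1}$. These two sets are disjoint, so both sides are $0$. From $S' - S = 0$ we get $S = S'$; from $(L - L') + (c - c') = 0$, looking at the component at $\g = 1$ gives $c = c'$ (since $L - L'$ has no component at $1$), and then $L = L'$.

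The steps are all routine; the only thing to be careful about is that the well-orderedness of the supports of $L$, $c$, $S$ is inherited from that of $\supp A$, so there is no question of these being ill-defined Hahn series—this is the point where one implicitly uses that subsets of well ordered sets are well ordered, noted in the remark following the definition of addition. There is no real obstacle here; the proposition is essentially the observation that a total order on $\MM$ lets us sort the support into the ``$>1$'', ``$=1$'', and ``$<1$'' pieces, and that this sorting is unambiguous.
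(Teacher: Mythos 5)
Your proof is correct and is exactly the routine argument the paper has in mind (the paper states this proposition without proof, implicitly treating it as an obvious consequence of the trichotomy $\g \fgt 1$, $\g = 1$, or $\g \fst 1$ for a totally ordered $\MM$). Both the existence argument by sorting the support and the uniqueness argument via disjointness of supports are the standard reasoning, and you correctly note the one small point that needs a word — that well-orderedness of the pieces is inherited from $\supp A$.
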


\begin{re}
Terminology: $L$ is the \Def{purely large part},
$c$ is the \Def{constant term}, and $S$ is the
\Def{small part} of $A$.
\end{re}

\begin{de}
\Def{Multiplication} is defined by convolution (as suggested by the
formal sum notation).
\begin{align*}
	& \sum_{\g\in\MM} a_\g \g \cdot \sum_{\g\in\MM} b_\g \g =
	\sum_{\g\in\MM} \left(\sum_{\m \n=\g} a_{\m} b_{\n}\right)\;\g ,
	\\ & \text{or }\qquad
	(AB)[\g] = \sum_{\m\n = \g} A[\m] B[\n] .
\end{align*}
\end{de}

Products are defined at least for $A, B$ with well ordered support.

\begin{pr}\label{Ewellproduct}
If $\AA_1, \AA_2 \subseteq \MM$ are well ordered sets
{\rm (}for the converse of $\fgt${\rm )}, then
$\AA = \SET{\g_1 \g_2}{\g_1 \in \AA_1, \g_2 \in \AA_2}$
is also well ordered.  For every $\g \in \AA$, the set
\begin{equation*}
	\SET{(\g_1,\g_2)}{\g_1 \in \AA_1, \g_2 \in \AA_2, \g_1 \g_2 = \g}
\end{equation*}
is finite.
\end{pr}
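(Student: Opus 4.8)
The plan is to prove both assertions by contradiction, leaning on the Proposition just stated: a set that is well ordered for the converse of $\fgt$ admits no infinite strictly $\fgt$-increasing sequence $\g^{(1)} \fst \g^{(2)} \fst \cdots$, while every infinite subset of it contains an infinite strictly $\fgt$-decreasing sequence. The one new ingredient is a combinatorial extraction lemma: \emph{if $L$ is well ordered for the converse of $\fgt$ and $(\g_i)_{i \in \N}$ is any sequence in $L$, then there are indices $i_1 < i_2 < \cdots$ with $\g_{i_1} \fgte \g_{i_2} \fgte \cdots$.} I would prove this by letting $\m_n$ be the $\fgt$-largest element of $\{\g_k : k \ge n\}$ (which exists by well-orderedness); the tails shrink, so $\m_1 \fgte \m_2 \fgte \cdots$, and if $\m_n$ is realized at some index $k \ge n$ then in fact $\m_k = \g_k$, so the ``record'' indices $\{k : \m_k = \g_k\}$ are cofinal in $\N$; listing them as $i_1 < i_2 < \cdots$ gives $\g_{i_j} = \m_{i_j}$, hence $\g_{i_1} \fgte \g_{i_2} \fgte \cdots$.

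Granting the lemma, here is how I would show $\AA$ is well ordered. If it were not, there would be an infinite sequence $\g^{(1)} \fst \g^{(2)} \fst \cdots$ in $\AA$; write $\g^{(i)} = \g_1^{(i)} \g_2^{(i)}$ with $\g_1^{(i)} \in \AA_1$ and $\g_2^{(i)} \in \AA_2$. Applying the lemma to $(\g_1^{(i)})_i$ and passing to the resulting subsequence (a subsequence of a $\fst$-chain is still a $\fst$-chain), I may assume $\g_1^{(1)} \fgte \g_1^{(2)} \fgte \cdots$ while still $\g^{(i)} \fst \g^{(i+1)}$ for every $i$. Now I use that $\MM$ is an ordered group: multiplying $\g_1^{(i)}\g_2^{(i)} \fst \g_1^{(i+1)}\g_2^{(i+1)}$ by $(\g_1^{(i)})^{-1}$, and noting $(\g_1^{(i)})^{-1}\g_1^{(i+1)} \fste 1$,
\begin{equation*}
  \g_2^{(i)} \fst \big((\g_1^{(i)})^{-1}\g_1^{(i+1)}\big)\g_2^{(i+1)} \fste \g_2^{(i+1)} ,
\end{equation*}
so $\g_2^{(1)} \fst \g_2^{(2)} \fst \cdots$ is an infinite strictly $\fgt$-increasing sequence in $\AA_2$ --- contradicting that $\AA_2$ is well ordered. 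Hence $\AA$ is well ordered for the converse of $\fgt$.

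For the finiteness claim, fix $\g \in \AA$ and suppose the set of pairs $(\g_1,\g_2)$ with $\g_1 \in \AA_1$, $\g_2 \in \AA_2$, and $\g_1\g_2 = \g$ were infinite. Since $\g_2 = \g\g_1^{-1}$ is determined by $\g_1$, the first coordinates $\g_1^{(i)}$ form an infinite set of distinct elements of $\AA_1$, which by the Proposition contains an infinite strictly $\fgt$-decreasing sequence $\eta_1 \fgt \eta_2 \fgt \cdots$. Each $\g\eta_k^{-1}$ lies in $\AA_2$; inverting reverses the order and multiplying by the fixed $\g$ preserves it, so $\g\eta_1^{-1} \fst \g\eta_2^{-1} \fst \cdots$ is an infinite strictly $\fgt$-increasing sequence in $\AA_2$, again contradicting well-orderedness. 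Therefore each $\g \in \AA$ has only finitely many such representations.

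I expect the extraction lemma to be the only genuine obstacle; once it is in hand, the rest is routine manipulation inside the ordered group $\MM$, with the single delicate point being to keep consistent the (confusing) convention that ``well ordered for the converse of $\fgt$'' forbids infinite \emph{increasing} chains. An alternative to the tail-maximum argument is the infinite Ramsey theorem, coloring each pair $\{i,j\}$ by whether $\g_1^{(i)} \fgte \g_1^{(j)}$ or not; but the tail-maximum proof keeps this section self-contained and elementary, in keeping with the paper's aim.
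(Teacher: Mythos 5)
Your proof is correct and follows essentially the same route as the paper: given an increasing sequence in $\AA$, extract a weakly decreasing subsequence of first coordinates, deduce a strictly increasing sequence of second coordinates, and contradict well-orderedness of $\AA_2$; the finiteness clause is handled by the same reversal trick. The one thing you add is a careful proof of the subsequence-extraction lemma (via tail maxima and record indices), which the paper simply invokes with ``taking a subsequence we may assume.''
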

\begin{proof}
Let $\BB \subseteq \AA$ be nonempty.  Assume $\BB$
has no greatest element.  Then there exist sequences $\m_j \in \AA_1$
and $\n_j \in \AA_2$ with $\m_j \n_j \in \BB$ and
$\m_1\n_1 \fst \m_2 \n_2 \fst \cdots$.
Because $\AA_1$ is well ordered, taking a subsequence we
may assume $\m_1 \fgte \m_2 \fgte \cdots$.  But then
$\n_1 \fst \n_2 \fst \cdots$, so $\AA_2$ is not well ordered.

Suppose $(\g_1,\g_2),(\m_1,\m_2) \in \AA_1 \times \AA_2$ with
$\g_1 \g_2 = \g = \m_1 \m_2$.  If $\g_1 \ne \m_1$, then $\g_2 \ne \m_2$.
If $\g_1 \fgt \m_1$, then $\g_2 \fst \m_2$.  Any infinite subset of
a well ordered set contains an infinite strictly decreasing sequence,
but the other well ordered set contains no infinite strictly
increasing sequence.
\end{proof}

\begin{pr}
The set of all $T \in \R^\MM$ with well ordered support is an
(associative, commutative) algebra over $\R$ with the operations defined above.
\end{pr}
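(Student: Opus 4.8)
The plan is to check the ring axioms and then the $\R$-algebra axioms one at a time; the only step requiring genuine care is associativity of multiplication. Let me write $\RR_0$ for the set of $T \in \R^\MM$ whose support is well ordered (for the converse of $\fgt$). The additive structure comes essentially for free: $\RR_0$ is closed under addition because the union of two well ordered sets is well ordered, and under negation and scalar multiplication because those operations can only shrink the support; associativity and commutativity of $+$, the zero element (the function identically $0$, whose support is empty and hence well ordered), and additive inverses are all componentwise identities inherited from $\R^\MM$. In the same way $\RR_0$ is an $\R$-vector space, being a subset of $\R^\MM$ closed under the vector-space operations.

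Next I would show multiplication maps $\RR_0 \times \RR_0$ into $\RR_0$ and verify its formal properties. If $A$ is supported by the well ordered set $\AA_1$ and $B$ by the well ordered set $\AA_2$, Proposition~\ref{Ewellproduct} tells us that for each $\g$ only finitely many pairs $(\m,\n) \in \AA_1 \times \AA_2$ satisfy $\m\n = \g$, so $(AB)[\g] = \sum_{\m\n=\g} A[\m]B[\n]$ is a finite sum of reals and is well defined; the same proposition gives $\supp(AB) \subseteq \SET{\g_1\g_2}{\g_1\in\AA_1,\,\g_2\in\AA_2}$, which is well ordered, so $AB \in \RR_0$. Commutativity of multiplication follows by reindexing this finite convolution sum via $(\m,\n) \mapsto (\n,\m)$ together with commutativity of $\MM$; the monomial attached to the identity of $\MM$ (i.e.\ the constant $1$) is a two-sided unit since $(1\cdot A)[\g] = A[\g]$; and distributivity $A(B+C) = AB + AC$ is again a componentwise identity among finite convolution sums, using that $B+C$ is supported by the well ordered set $\AA_2 \cup \AA_3$.

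The hard part will be associativity, $(AB)C = A(BC)$, since proving it means rearranging a sum and so requires a finiteness statement for triple products. Let $A,B,C$ be supported by well ordered sets $\AA_1,\AA_2,\AA_3$. Applying Proposition~\ref{Ewellproduct} first to $\AA_1,\AA_2$ (so that $\AA_1\AA_2 := \SET{\g_1\g_2}{\g_1\in\AA_1,\g_2\in\AA_2}$ is well ordered) and then to $\AA_1\AA_2$ and $\AA_3$, I would argue that for each $\g$ the set $S_\g = \SET{(\m,\n,\p)\in\AA_1\times\AA_2\times\AA_3}{\m\n\p=\g}$ is finite: the map $(\m,\n,\p)\mapsto(\m\n,\p)$ sends $S_\g$ into the finite set of pairs $(r,\p)$ with $r\in\AA_1\AA_2$, $\p\in\AA_3$, $r\p=\g$, and each fiber---the factorizations $r=\m\n$ with $\m\in\AA_1$, $\n\in\AA_2$---is finite, so $S_\g$ is finite. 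Then
\begin{equation*}
	\big((AB)C\big)[\g] = \sum_{r\p=\g}(AB)[r]\,C[\p] = \sum_{r\p=\g}\left(\sum_{\m\n=r}A[\m]B[\n]\right)C[\p]
\end{equation*}
is a finite sum of finite sums, and regrouping its terms according to the product $\m\n$ identifies it with $\sum_{(\m,\n,\p)\in S_\g}A[\m]B[\n]C[\p]$; the symmetric computation for $A(BC)$ yields the same quantity, so $(AB)C = A(BC)$.

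To finish, the $\R$-algebra compatibilities $c(AB) = (cA)B = A(cB)$ and $1_\R\cdot A = A$ are immediate componentwise identities, so $\RR_0$ is a commutative, associative, unital $\R$-algebra. I expect the triple-product finiteness step---the iterated application of Proposition~\ref{Ewellproduct}---to be the only place where the argument is not pure bookkeeping in $\R^\MM$; once it is in hand, associativity and the remaining multiplicative axioms follow by the usual grouping-of-finite-sums arguments.
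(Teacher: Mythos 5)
Your proof is correct, and you should know that the paper itself deliberately omits one: immediately after the proposition statement it says only ``There are a lot of details to check.\ In fact this is a field \cite[p.~276]{cohn}, but we won't need that result.\ This goes back to H.~Hahn, 1907 \cite{hahn},'' so there is no in-paper argument to compare against. Your argument is the standard and, I think, the right one: the additive and $\R$-module structure is inherited componentwise from $\R^\MM$ once one notes that unions, subsets, and the empty set preserve well-orderedness; closure under multiplication and the well-definedness of the convolution sum come directly from Proposition~\ref{Ewellproduct}; and you correctly isolate the only genuinely non-bookkeeping step as associativity, which needs a triple-factorization finiteness fact. Your derivation of that fact---apply Proposition~\ref{Ewellproduct} to $(\AA_1,\AA_2)$ to make $\AA_1\AA_2$ well ordered with finite fibers, then again to $(\AA_1\AA_2,\AA_3)$, and observe that the map $(\m,\n,\p)\mapsto(\m\n,\p)$ has finite image and finite fibers over each $\g$---is exactly what is needed, and the subsequent regrouping of a finite sum of finite sums into $\sum_{(\m,\n,\p)\in S_\g}A[\m]B[\n]C[\p]$ is a clean way to finish. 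One small presentational remark: you could also observe that the symmetric computation for $A(BC)$ goes through the same set $S_\g$ via the map $(\m,\n,\p)\mapsto(\m,\n\p)$, so both sides literally equal the same triple sum; you say this, but it is worth stating explicitly that the well-orderedness of $\AA_2\AA_3$ (hence the legality of the intermediate sum for $A(BC)$) also follows from Proposition~\ref{Ewellproduct}.
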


There are a lot of details to check.   In fact this
is a field \cite[p.~276]{cohn}, but we won't need
that result.  This goes back to H.~Hahn, 1907 \cite{hahn}.

\begin{pr}[Canonical Multiplicative Decomposition]\label{C_mult}
Every nonzero $T \in \R^\MM$ with well ordered support may be
written uniquely in the form
$T = a \cdot\g \cdot(1+S)$ where $a$ is nonzero real,
$\g \in \MM$, and $S$ is small.
\end{pr}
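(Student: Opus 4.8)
The plan is to read $a$ and $\g$ off the dominance of $T$ and to obtain $S$ by dividing $T$ by the monomial $a\g$. First I would set $\g = \mag T$ and let $a$ be the leading coefficient of $T$, so that $\dom T = a\g$ in the sense of Definition~\ref{magdef}. Since $\g \in \MM$ is invertible in the group, multiplication by the monomial $\g^{-1}$ is a legitimate operation on $\R^\MM$: it merely shifts each support element, sending $\n$ to $\g^{-1}\n$, and because multiplication by a fixed element of an ordered abelian group preserves $\fgt$, it carries well ordered supports to well ordered supports (this is also the singleton case of Proposition~\ref{Ewellproduct}). Hence $S := a^{-1}\g^{-1}T - 1$ is a well-defined element of $\R^\MM$ with well ordered support, and $T = a\g(1+S)$ by construction.

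Next I would verify that $S$ is small. The support of $a^{-1}\g^{-1}T$ is $\SET{\g^{-1}\n}{\n \in \supp T}$; its largest element is $\g^{-1}\g = 1$, carrying coefficient $a^{-1}a = 1$, while every other $\n \in \supp T$ satisfies $\n \fst \g$ and hence $\g^{-1}\n \fst 1$. Subtracting the $1$ removes precisely the $\g^{-1}\g$ term, so every monomial in $\supp S$ is $\fst 1$; that is, $S$ is small. This establishes existence.

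For uniqueness, suppose $T = a\g(1+S) = a'\g'(1+S')$ with $a,a'$ nonzero real, $\g,\g' \in \MM$, and $S,S'$ small. The key observation is that $\g$ and $a$ are forced by $T$: since $1+S$ has largest support monomial $1$ with leading coefficient $1$, Proposition~\ref{Ewellproduct} shows that in the product $a\g(1+S)$ the monomial $\g$ arises only from the pair $(\g,1)$ — for any $\n \in \supp(1+S)$ with $\n \ne 1$ we have $\n \fst 1$, so $\g\n \fst \g$ — and it arises there with coefficient $a \ne 0$. Therefore $\mag T = \g$ and the leading coefficient of $T$ is $a$; the same holds for the primed data, so $\g = \g'$ and $a = a'$. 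Multiplying $a\g(1+S) = a\g(1+S')$ by the monomial $a^{-1}\g^{-1}$ then gives $1+S = 1+S'$, hence $S = S'$. I do not expect any real obstacle here; the one point to keep in mind throughout is that $\R^\MM$ has only been shown to be an algebra, not a field, so every ``division'' must be performed as multiplication by a monomial (always legitimate) rather than by a general inverse.
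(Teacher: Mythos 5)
Your proof is correct and complete. The paper states Proposition~\ref{C_mult} without proof (it is among the routine details it leaves unchecked), so there is no official argument to compare against; the one you give is the natural and essentially forced one: set $\g=\mag T$ and $a$ the leading coefficient, multiply by the monomial $a^{-1}\g^{-1}$ to extract the small part, and for uniqueness observe via Proposition~\ref{Ewellproduct} that $\mag\big(a\g(1+S)\big)=\g$ with leading coefficient $a$. The scruple you flag at the end---that at this point $\R^\MM$ with well ordered support has only been shown to be an algebra, not a field, so every ``division'' must be carried out as multiplication by a monomial (or by a nonzero real)---is exactly the right one, and your argument respects it throughout.
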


\begin{com}
$-3e^x + 4 x^2 = -3\cdot e^x\cdot \big(1-(4/3)x^2e^{-x}\big)$.
\end{com}

\begin{pr}
The set of all purely large $T$
{\rm (}including $0${\rm )} is a group under
addition.  The set of all small $T$ is a group under addition.
The set of all purely large $T$
{\rm (}with well ordered support{\rm )}
is closed under multiplication.  The set of all small $T$ {\rm (}with
well ordered support{\rm )} is closed under multiplication.
\end{pr}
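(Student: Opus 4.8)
The plan is to verify all four assertions at the level of supports, since each is really a statement about which monomials can occur. First I would dispose of the two additive claims. Recall that $0$ is purely large by the empty-set convention, and likewise $0$ is small, so both sets contain the additive identity. For closure under addition, note that $(A+B)[\g] = A[\g]+B[\g]$, so $\supp(A+B) \subseteq \supp A \cup \supp B$; hence if every $\g \in \supp A$ and every $\g \in \supp B$ satisfies $\g \fgt 1$, then every $\g \in \supp(A+B)$ does too, and $A+B$ is purely large. The identical argument with $\fst$ in place of $\fgt$ handles the small elements. For additive inverses, $\supp(-A) = \supp A$, so $-A$ is purely large (resp. small) whenever $A$ is. Thus both sets are subgroups of $(\R^\MM,+)$.

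For the two multiplicative claims, suppose $A$ and $B$ each have well ordered support. By Proposition~\ref{Ewellproduct}, the set $\AA = \SET{\g_1\g_2}{\g_1 \in \supp A,\ \g_2 \in \supp B}$ is well ordered and each $\g \in \AA$ is represented by only finitely many pairs, so the convolution defining $AB$ is legitimate, $AB$ has well ordered support, and $\supp(AB) \subseteq \AA$. Now I would invoke that $\MM$ is an \emph{ordered} abelian group: multiplying $\g_1 \fgt 1$ by $\g_2 \fgt 1$ gives $\g_1\g_2 \fgt \g_2 \fgt 1$. Hence if $A$ and $B$ are purely large, every element of $\AA$, and a fortiori every element of $\supp(AB)$, is $\fgt 1$, so $AB$ is purely large; and if one of $A,B$ is $0$ then $AB = 0$, which is purely large. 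Replacing $\fgt$ by $\fst$ throughout (from $\g_1 \fst 1$ and $\g_2 \fst 1$ we get $\g_1\g_2 \fst \g_2 \fst 1$) shows the small elements are closed under multiplication.

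I do not expect a genuine obstacle here. The only points requiring care are: (i) citing Proposition~\ref{Ewellproduct} so that the products are bona fide Hahn series with well ordered support — this is exactly why the two multiplicative statements are hedged with ``with well ordered support''; and (ii) keeping the empty-support convention in mind so that $0$ is correctly counted as purely large in each step. Everything else is routine bookkeeping on supports, combined with the compatibility of the order on $\MM$ with its group operation.
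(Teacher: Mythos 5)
Your proof is correct, and since the paper states this proposition without proof (it is part of the ``tedious things to check'' the author elides), your argument is exactly the routine fill-in one would expect: work at the level of supports, use $\supp(A+B)\subseteq\supp A\cup\supp B$ and $\supp(-A)=\supp A$ for the additive group claims, and use Proposition~\ref{Ewellproduct} together with translation-invariance of $\fgt$ (so $\g_1\fgt 1$ and $\g_2\fgt 1$ give $\g_1\g_2\fgt\g_2\fgt 1$) for the multiplicative closure claims, with the $0$ case covered by the empty-support convention.
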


\section*{3C\quad Grids}
\addcontentsline{toc}{section}{3C\quad Grids}%
Some definitions will depend on a finite set of \Def{ratios}
(or \Def{generators}).
We will keep track of the set of ratios more than is customary.
But it is useful for the proofs, and especially for the
Fixed-Point Theorem~\ref{costinfixed}.

Write $\Msmall = \SET{\g \in \MM}{\g \fst 1}$.
A \Def{ratio set} (or \Def{generating set})
is a finite subset $\bmu \subset \Msmall$.
We will use bold Greek for ratio sets.
If convenient, we may number the elements of $\bmu$ in order,
$\mu_1 \fgt \mu_2 \fgt \cdots \fgt \mu_n$ and then consider
$\bmu$ an ordered $n$-tuple.

\begin{no}
Let $\bmu = \{\mu_1,\cdots,\mu_n\}$.  For any mul\-ti-in\-dex
$\bk = (k_1,\cdots,k_n)$ $\in \Z^n$, define $\bmu^\bk = \mu_1^{k_1}\cdots\mu_n^{k_n}$.
\end{no}

If $\bk > \bp$, then $\bmu^\bk \fst \bmu^\bp$.
Also $\bmu^\0 = 1$.  If $\bk > \0$ then $\bmu^\bk \fst 1$
(but not in general conversely).

\begin{com}
Let $\bmu = \{x^{-1},e^{-x}\}$.  Then
$1 \fgt \mu_1^{-1} \mu_2 = x e^{-x}$, even though $(-1,1) \not> (0,0)$.
\end{com}

\begin{de}\label{def.grid}
For ratio set $\bmu = \{\mu_1,\cdots,\mu_n\}$,
let $\GRID^{\ebmu} = \SET{\bmu^\bk}{\bk \in \Z^n}$, the
subgroup generated by $\bmu$.  If
$\bm \in \Z^n$, then define a subset of $\GRID^{\ebmu}$ by
\begin{equation*}
	\GRID^{\ebmu,\bm} = \SET{\bmu^\bk}{\bk \in \J_\bm}
	= \SET{\bmu^\bk}{\bk \ge \bm} .
\end{equation*}
The sets $\GRID^{\ebmu,\bm}$ are called \Def{grids}.
A Hahn series $T \in \R^\MM$ supported by some grid is
said to be \Def{grid-based}.  A set $\AA \subseteq \MM$
is called a \Def{subgrid} if $\AA \subseteq \GRID^{\ebmu,\bm}$
for some $\bmu, \bm$.
\end{de}

\begin{pr}
Let $\W$ be the set of all subgrids.
\begin{enumerate}
\item[{\rm (a)}] $\GRID^{\ebmu,\bm}$ is well ordered {\rm (}by
the converse of $\fgt${\rm )}.
\item[{\rm (b})] If $\bmu \subseteq \tbmu$,
then $\GRID^{\ebmu,\bm} \subseteq
\GRID^{\tebmu,\tilde{\bm}}$ for some $\widetilde{\bm}$.
\item[{\rm (c)}] If $\AA, \BB \in \W$, then $\AA \cup \BB \in \W$.
\item[{\rm (d)}] If $\AA, \BB \in \W$, then $\AA \cdot \BB \in \W$,
where $\AA \cdot \BB := \SET{\fa\fb}{\fa \in \AA, \fb \in \BB}$.
\end{enumerate}
\end{pr}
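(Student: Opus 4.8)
The plan is to make part~(b) the workhorse, since (c) and (d) reduce to it by passing to a common generating set, while (a) is an independent application of Dickson's Lemma (Proposition~\ref{magfin}). I would first prove~(b): suppose $\bmu \subseteq \tbmu$ and write the extra generators as $\tbmu \setminus \bmu = \{\nu_1,\dots,\nu_r\}$, where $\bmu = \{\mu_1,\dots,\mu_n\}$. If $\g \in \GRID^{\ebmu,\bm}$, then by definition $\g = \mu_1^{\bk[1]}\cdots\mu_n^{\bk[n]}$ for some $\bk \ge \bm$; the same element of $\MM$ equals $\tbmu^{\bk'}$, where $\bk'$ extends $\bk$ by $r$ zero coordinates (the exponents of the $\nu_j$), and $\bk' \ge \widetilde{\bm}$, where $\widetilde{\bm}$ extends $\bm$ by $r$ zeros. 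Re-sorting $\tbmu$ into strictly decreasing order only permutes coordinates, hence permutes $\widetilde{\bm}$ the same way. So $\GRID^{\ebmu,\bm} \subseteq \GRID^{\tebmu,\widetilde{\bm}}$.

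Then (c) and (d) follow quickly. Given $\AA \subseteq \GRID^{\ebmu,\bm}$ and $\BB \subseteq \GRID^{\tebmu,\widetilde{\bm}}$, put $\bmu'' = \bmu \cup \tbmu$, a ratio set containing both $\bmu$ and $\tbmu$; by~(b) there are multi-indices $\bp, \bp'$ with $\GRID^{\ebmu,\bm} \subseteq \GRID^{\bmu'',\bp}$ and $\GRID^{\tebmu,\widetilde{\bm}} \subseteq \GRID^{\bmu'',\bp'}$. For~(c), the multi-index $\bp''$ with $\bp''[i] = \min(\bp[i],\bp'[i])$ satisfies $\J_\bp \cup \J_{\bp'} \subseteq \J_{\bp''}$, whence $\AA \cup \BB \subseteq \GRID^{\bmu'',\bp''} \in \W$. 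For~(d), if $\fa = (\bmu'')^\bk \in \AA$ with $\bk \ge \bp$ and $\fb = (\bmu'')^{\bk'} \in \BB$ with $\bk' \ge \bp'$, then $\fa\fb = (\bmu'')^{\bk+\bk'}$ with $\bk+\bk' \ge \bp+\bp'$, so $\AA \cdot \BB \subseteq \GRID^{\bmu'',\bp+\bp'} \in \W$.

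For~(a) I would argue as follows. Let $\BB \subseteq \GRID^{\ebmu,\bm}$ be nonempty, and set $\BE = \SET{\bk \in \J_\bm}{\bmu^\bk \in \BB}$; this is a nonempty subset of $\J_\bm$ whose image under $\bk \mapsto \bmu^\bk$ is exactly $\BB$. By Proposition~\ref{magfin}, $\Min\BE$ is finite and nonempty, and every $\bk \in \BE$ satisfies $\bk \ge \bk_0$ for some $\bk_0 \in \Min\BE$. Since $\MM$ is totally ordered, the finite set $\SET{\bmu^{\bk_0}}{\bk_0 \in \Min\BE} \subseteq \BB$ has a $\fgt$-greatest element $\m$. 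For any $\g = \bmu^\bk \in \BB$, pick $\bk_0 \in \Min\BE$ with $\bk_0 \le \bk$: since $\bk_0 < \bk$ forces $\bmu^\bk \fst \bmu^{\bk_0}$, we get $\g = \bmu^\bk \fste \bmu^{\bk_0} \fste \m$. Hence $\m = \max\BB$, and $\GRID^{\ebmu,\bm}$ is well ordered for the converse of~$\fgt$.

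I expect the main obstacle to be nothing deep but rather the bookkeeping in~(b): keeping the multi-index conventions straight as the generating set grows---adjoining zero coordinates for the new generators, and re-sorting the enlarged set into decreasing order, which merely permutes coordinates and the threshold $\widetilde{\bm}$. Once (b) is in place, (c) and (d) are immediate from elementary facts about the sets $\J_\bm$, and (a) is a direct consequence of the Noetherian property of $\J_\bm$ (Proposition~\ref{magfin}) transported along the order-reversing surjection $\bk \mapsto \bmu^\bk$.
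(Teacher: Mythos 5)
Your proof is correct and follows essentially the same route as the paper's: (a) is the paper's argument via $\Min\BE$ and Proposition~\ref{magfin}, (b) is the ``insert zeros'' observation, and (c)/(d) use the union of the ratio sets together with, respectively, the coordinatewise minimum and the sum of the thresholds. The only difference is that you spell out explicitly that (c) and (d) require first passing through (b) to align the two grids over a common generating set, which the paper's one-line proof leaves implicit.
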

\begin{proof}
(a) Let $\BB \subseteq \GRID^{\ebmu,\bm}$ be nonempty.
Define $\BE = \SET{\bk \in \J_\bm}{\bmu^\bk \in \BB}$.
Then the set $\Min \BE$ of minimal elements of $\BE$ is finite.  So
the greatest element of $\BB$ is
$\max\SET{\bmu^\bk}{\bk \in \Min \BE}$.

(b) Insert $0$s for the extra entries of $\widetilde{\bm}$.

(c) Use the union of the two $\bmu$s and the minimum
of the two $\bm$s.

(d) Use the union of the two $\bmu$s and the sum
of the two $\bm$s.
\end{proof}

\begin{re}
By (c) and (d), if $S, T \in \R^\MM$ each have support in
$\W$, then $S+T$ and $S T$ also have support in $\W$.
\end{re}

\begin{re}
Write $\bmu = \{\mu_1,\mu_2,\cdots,\mu_n\}$ and
$\bm = (m_1,m_2,\cdots,m_n)$.
Saying $T$ is supported by the grid $\GRID^{\ebmu,\bm}$ means
that $T = \sum c_\bk \bmu^\bk$ is a
one-sided multiple \Def{Laurent series} in the symbols $\bmu$:
\begin{equation*}
	\sum_{k_1=m_1}^\infty \sum_{k_2=m_2}^\infty ... \sum_{k_n=m_n}^\infty 
	c_{k_1k_2\dots k_n} \mu_1^{k_1} \mu_2^{k_2}\cdots \mu_n^{k_n} .
\end{equation*}
\end{re}

\begin{com}
This is one advantage of the grid-based approach.  We consider
series only of this ``multiple Laurent series'' type.
We do not have to contemplate series supported by abstract ordinals,
something that may be considered ``esoteric''---at least
by beginners.
\end{com}

\begin{de}\label{def:gridseries}
Let $\bmu = \{\mu_1,\cdots,\mu_n\} \subseteq \Msmall$
be a ratio set
and $\bm \in \Z^n$.
The set of
series supported by the grid $\GRID^{\ebmu,\bm}$ is
\begin{equation*}
	\T^{\ebmu,\bm} = \SET{T \in \R^\MM}{\supp T \subseteq \GRID^{\ebmu,\bm}}.
\end{equation*}
The set of $\bmu$-based series is
\begin{equation*}
	\T^{\ebmu} = \bigcup_{\bm \in \Z^n} \T^{\ebmu,\bm} .
\end{equation*}
The set of grid-based series is
\begin{equation*}
	\R\lbb\MM\rbb = \bigcup_{\ebmu} \T^{\ebmu} .
\end{equation*}
In this union, all finite sets $\bmu \subset \Msmall$ are allowed, and all
values of $n$ are allowed.  But each individual series is
supported by a grid $\GRID^{\ebmu,\bm}$
generated by one finite set $\bmu$.
\end{de}

If $\bmu \subseteq \tbmu$, then $\T^{\ebmu} \subseteq \T^{\tebmu}$
in a natural way.  If $\MM$ is a subgroup of $\widetilde{\MM}$ and
inherits the order, then $\R\lbb\MM\rbb \subseteq \R\lbb\,\widetilde{\MM}\,\rbb$
in a natural way.

\begin{com}
The series
\begin{equation*}
	\sum_{j=1}^\infty x^{1/j} = x^1 + x^{1/2} + x^{1/3} + x^{1/4} +\cdots ,
\end{equation*}
despite having well ordered support, does not belong to $\R\lbb\MM\rbb$.
It is not grid-based.
\end{com}

\begin{com}
The correspondence $\bk \mapsto \bmu^\bk$ may fail to be injective.
Let $\bmu = \{x^{-1/3},x^{-1/2}\}$.  Then $\mu_1^3 = \mu_2^2$.
\end{com}

\begin{pr}\label{finitetoone}
Given $\bmu, \bm, \g$, there are only finitely many
$\bk \in \J_\bm$ with $\bmu^\bk = \g$.
\end{pr}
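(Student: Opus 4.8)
The plan is to reduce the statement to the kernel of the group homomorphism $\varphi\takes \Z^n \to \GRID^\ebmu$ given by $\bk \mapsto \bmu^\bk$, and then intersect with $\J_\bm$. First I would fix $\bmu$, $\bm$, and $\g$, and observe that if there is no $\bk \in \J_\bm$ with $\bmu^\bk = \g$ the claim is vacuous, so we may pick one such $\bk_0$. Then any other $\bk \in \J_\bm$ with $\bmu^\bk = \g$ satisfies $\bmu^{\bk - \bk_0} = 1$, i.e.\ $\bk - \bk_0 \in K$, where $K = \SET{\bp \in \Z^n}{\bmu^\bp = 1}$ is the kernel of $\varphi$. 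So the set in question is contained in $(\bk_0 + K) \cap \J_\bm$, and it suffices to show this set is finite.

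The key point is that $K$ is a subgroup of $\Z^n$ with the property that $K \cap \N^n = \{\0\}$: if $\bp \ge \0$ and $\bmu^\bp = 1$, then from ``$\bk > \0 \Rightarrow \bmu^\bk \fst 1$'' (stated just before the relevant Comment in \S\SUBSECgenerators) we would get $\bmu^\bp \fst 1$ unless $\bp = \0$, contradicting $\bmu^\bp = 1$ (since $1 \fst 1$ is false). Actually more is true and cleaner: if $\bp \in K$ and $\bp \ge \0$ with $\bp \ne \0$, then $\bmu^\bp \fst 1 \ne \bmu^\bp$, impossible; and symmetrically $-\bp \in K$, so $K \cap (-\N^n) = \{\0\}$ as well. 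Hence $K$ contains no nonzero element that is $\ge \0$ or $\le \0$; in particular $K \cap \J_\0 = K \cap \N^n = \{\0\}$.

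Now $(\bk_0 + K) \cap \J_\bm$: translating, this is in bijection with $K \cap \J_{\bm - \bk_0}$. Write $\bq = \bm - \bk_0$. I claim $K \cap \J_\bq$ is finite. Suppose not; then it is an infinite subset of $\J_\bq$, so by Proposition~\ref{increasing} it contains a strictly increasing sequence $\bk_1 < \bk_2 < \cdots$ in $K$. But then $\bk_2 - \bk_1 \in K$ and $\bk_2 - \bk_1 > \0$, contradicting $K \cap \N^n = \{\0\}$. Therefore $K \cap \J_\bq$ is finite, hence so is $(\bk_0 + K)\cap \J_\bm$, hence so is $\SET{\bk \in \J_\bm}{\bmu^\bk = \g}$.

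I expect the only mild subtlety is the verification that $K \cap \N^n = \{\0\}$, i.e.\ correctly invoking the implication $\bk > \0 \Rightarrow \bmu^\bk \fst 1$ together with irreflexivity of $\fst$; everything after that is a clean translate-and-apply-Dickson argument, with Proposition~\ref{increasing} doing the real work. An alternative to the ``increasing sequence'' step is to note directly that $\Min(K \cap \J_\bq)$ is finite by Proposition~\ref{magfin}, and that no element of $K \cap \J_\bq$ can strictly dominate another (else their difference is a nonzero element of $K \cap \N^n$), so $K \cap \J_\bq = \Min(K\cap\J_\bq)$ is itself finite — either route works.
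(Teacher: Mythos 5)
Your proof is correct and uses the same key ingredients as the paper: Proposition~\ref{increasing} to extract $\bk_1 < \bk_2$ from an infinite subset of $\J_\bm$, and the order-reversing property $\bk_1 < \bk_2 \Rightarrow \bmu^{\bk_1} \fgt \bmu^{\bk_2}$ to derive the contradiction. The paper applies these directly to the fiber $\SET{\bk \in \J_\bm}{\bmu^\bk = \g}$ in two lines, whereas your detour through the kernel $K$ of $\bk \mapsto \bmu^\bk$ and its translate, while perfectly sound, is an unnecessary elaboration of the same argument.
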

\begin{proof}
Suppose there are infinitely many $\bk \in \J_\bm$ with
$\bmu^\bk = \g$.  By Proposition~\ref{increasing}, this includes
$\bk_1 < \bk_2$.  But then $\bmu^{\bk_1} \fgt \bmu^{\bk_2}$,
so they are not both equal to $\g$.
\end{proof}

The map $\bk \mapsto \bmu^\bk$ might not be one-to-one,
but it is finite-to-one.  So:  if
$T_i \in \T^{\ebmu,\bm}$ for all $i \in I$,
and $\BE_i = \SET{\bk \in \J_\bm}{\bmu^\bk \in \supp T_i}$,
then $(\supp T_i)$ is point-finite if and only if
$(\BE_i)$ is point-finite.  We may sometimes say
a family $(T_i)$ is \Def{point-finite} when the family
$(\supp T_i)$ of supports is point-finite.

\subsection*{Manifestly Small}

\begin{de} If $\g$ may be
written in the form $\bmu^\bk$ with $\bk > \0$,
then $\g$ is \Def{$\bmu$-small}, written $\g \fst^{\ebmu} 1$.
[For emphasis, \Def{manifestly $\bmu$-small}.]
If every $\g \in \supp T$ is $\bmu$-small, then we say $T$
is \Def{$\bmu$-small}, written $T \fst^{\ebmu} 1$.
\end{de}

\begin{com}\label{ex:musmall}
Let $\bmu = \{x^{-1},e^{-x}\}$.  Then $\g = x e^{-x}$ is
small, but not $\bmu$-small.  For
$T = x^{-1} + xe^{-x}$ we have $\mag T \fst^{\ebmu} 1$ but not
$T \fst^{\ebmu} 1$.
\end{com}

\subsection*{The Asymptotic Topology}

\begin{de}\label{Easympt}
\Def{Limits} of grid-based
series.  Let $I$ be an infinite index set (such
as $\N$) and let $T_i, T \in \R\lbb\MM\rbb$ for $i \in I$.  Then:
(a)~$T_i \overset{\ebmu,\bm}{\longrightarrow} T$ means:
$\supp T_i \subseteq \GRID^{\ebmu,\bm}$
for all $i$, and the family
$\supp(T_i-T)$ is point-finite.
(b)~$T_i \muto T$ means:
there exists $\bm$ such that $T_i \overset{\ebmu,\bm}{\longrightarrow} T$.
(c)~$T_i \to T$ means: there exists $\bmu$ such that
$T_i \muto T$.  (See the ``asymptotic topology''
in \cite[\S1.2]{costintop}.)
\end{de}

\begin{com}
The sequence $(x^j)_{j \in \N}$ is point-finite,
but it does not converge to $0$
because the supports are not contained
in any fixed grid $\GRID^{\ebmu,\bm}$.
\end{com}

\begin{com}
This type of convergence is not the convergence associated
with the order.  For example, $(x^{-j})_{j \in \N} \to 0$ even
though $x^{-j} \fgt e^{-x}$ for all $j$.  Another example:
The grid-based series
$\sum_{j=0}^\infty {x^{-j}}$ is $A = (1-x^{-1})^{-1}$, even though
there are many grid-based series (for example,
$A - e^{-x}$)
strictly smaller than $A$ but strictly larger
than all partial sums $\sum_{j=0}^N {x^{-j}}$.

In fact, the order topology would have poor algebraic properties
for sequences:
For example
\begin{equation*}
	x^{-1} > x^{-2}+e^{-x} > x^{-3} > x^{-4}+e^{-x} > \cdots
\end{equation*}
(in both orderings $>$ and $\fgt$).
So in the order topology the sequences $x^{-j}$ and
$x^{-j}+e^{-x}$ should have the same limit, but their difference
does not converge to zero.
\end{com}

\begin{pr}[Continuity]\label{Emultcontin}
Let $I$ be an infinite index set, and let
$A_i, B_i \in \R\lbb\MM\rbb$ for $i \in I$.
If $A_i \to A$ and $B_i \to B$, then
$A_i + B_i \to A+B$ and $A_i B_i \to AB$.
\end{pr}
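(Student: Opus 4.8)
The plan is to handle the additive statement directly, and to reduce the product to the additive case via the telescoping identity $A_iB_i - AB = A_i(B_i-B) + (A_i-A)B$, showing that each of the two pieces converges to $0$.

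First I would unwind Definition~\ref{Easympt}: the hypothesis $A_i \to A$ furnishes a ratio set $\bmu$ and a multi-index $\bm$ with $\supp A_i \subseteq \GRID^{\ebmu,\bm}$ for all $i$ and with the family $(\supp(A_i-A))_i$ point-finite; symmetrically for $B_i \to B$, with data $\tbmu,\tilde{\bm}$. A preliminary observation is that $\supp A \subseteq \GRID^{\ebmu,\bm}$ as well: a monomial $\g$ lying in $\supp A$ but outside the grid would satisfy $A_i[\g]=0$ for \emph{every} $i$, hence $\g \in \supp(A_i-A)$ for every $i$, contradicting point-finiteness; consequently $\supp(A_i-A) \subseteq \supp A_i \cup \supp A \subseteq \GRID^{\ebmu,\bm}$, and likewise for $B$. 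For the additive statement I would then pass to a common grid $\GRID^{\hat\bmu,\hat\bm}$ containing $\GRID^{\ebmu,\bm}\cup\GRID^{\tbmu,\tilde{\bm}}$ (using closure of subgrids under union), observe that $\supp(A_i+B_i)$ and $\supp\big((A_i+B_i)-(A+B)\big)$ are contained in unions of the relevant supports and hence in $\GRID^{\hat\bmu,\hat\bm}$, and invoke the elementary fact that a union of two point-finite families is point-finite.

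For the product I would prove $A_i(B_i-B)\to 0$ and $(A_i-A)B\to 0$ separately, then apply the additive case (adding back the constant sequence $AB$, which converges to $AB$). Consider $(A_i-A)B$. Its support lies in $\supp(A_i-A)\cdot\supp B\subseteq\GRID^{\ebmu,\bm}\cdot\GRID^{\tbmu,\tilde{\bm}}$, and a product of two subgrids is a subgrid, so all these supports sit inside one fixed grid. For point-finiteness, fix a monomial $\g$ in that product grid: since grids are well ordered, Proposition~\ref{Ewellproduct} gives only finitely many factorizations $\g=\m\n$ with $\m\in\GRID^{\ebmu,\bm}$ and $\n\in\GRID^{\tbmu,\tilde{\bm}}$, say $(\m_1,\n_1),\dots,(\m_r,\n_r)$, and crucially this list does not depend on $i$. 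If $\big((A_i-A)B\big)[\g]\ne 0$ then $(A_i-A)[\m_s]\ne 0$ for some $s$, i.e.\ $\m_s\in\supp(A_i-A)$; point-finiteness of $(\supp(A_i-A))_i$ bounds the set of such $i$ for each of the finitely many indices $s$. The term $A_i(B_i-B)$ is symmetric: its support is still confined to $\GRID^{\ebmu,\bm}\cdot\GRID^{\tbmu,\tilde{\bm}}$ because $\supp A_i\subseteq\GRID^{\ebmu,\bm}$ \emph{uniformly} in $i$, and the factorization argument now pins down $\n_s\in\supp(B_i-B)$.

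The step I expect to be the main obstacle is exactly this product argument — specifically, keeping the support of the \emph{varying} series $A_i$ inside one fixed grid, so that ``a product of subgrids is a subgrid'' and the finite-factorization clause of Proposition~\ref{Ewellproduct} can be applied uniformly in $i$; once that uniformity is secured, point-finiteness of one factor-family propagates through the convolution. A routine side check along the way is that every convolution product written down is actually defined, which follows from well-orderedness of grids together with Proposition~\ref{Ewellproduct}.
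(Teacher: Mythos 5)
Your proof is correct, and it rests on the same core lemma as the paper's, namely the finite--factorization clause of Proposition~\ref{Ewellproduct} combined with point-finiteness of the families $(\supp(A_i-A))$ and $(\supp(B_i-B))$. The difference is organizational: the paper first arranges a common $\bmu,\bm$ with $A_i \overset{\ebmu,\bm}{\longrightarrow} A$ and $B_i \overset{\ebmu,\bm}{\longrightarrow} B$, then for each fixed $\g \in \GRID^{\ebmu,2\bm}$ uses the finitely many factorizations $\g=\m\n$ inside $\GRID^{\ebmu,\bm}$ to produce a single finite exceptional set of indices outside of which $A_i[\m]=A[\m]$ and $B_i[\n]=B[\n]$ for all those pairs simultaneously, hence $(A_iB_i)[\g]=(AB)[\g]$. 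You instead telescope $A_iB_i-AB = A_i(B_i-B)+(A_i-A)B$ and show each summand tends to $0$, which splits the same accounting into two one-sided convolutions. The paper's version dispatches both factors in a single pass and is slightly shorter; your version separates the two sources of variation, which can be easier to audit, at the cost of needing the additive case as a final step. One thing you make explicit that the paper leaves implicit is the observation that $\supp A \subseteq \GRID^{\ebmu,\bm}$ (forced by point-finiteness together with $I$ being infinite); this fact is silently used in the paper as well, since $\supp(AB)\subseteq\GRID^{\ebmu,2\bm}$ is required in order to conclude $A_iB_i\overset{\ebmu,2\bm}{\longrightarrow}AB$ rather than merely point-finiteness on the grid. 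So your write-up is, if anything, a bit more careful on that boundary point.
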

\begin{proof}
We may increase $\bmu$ and decrease $\bm$ to arrange
$A_i \overset{\ebmu,\bm}{\longrightarrow} A$ and
$B_i \overset{\ebmu,\bm}{\longrightarrow} B$ for the same
$\bmu, \bm$.  Then
$A_i + B_i \overset{\ebmu,\bm}{\longrightarrow} A+B$
and $A_i B_i \overset{\ebmu,\bp}{\longrightarrow} AB$
for $\bp = 2\bm$. To see this: let $\g \in \GRID^{\ebmu,\bp}$.
There are finitely many pairs $\m, \n \in \GRID^{\ebmu,\bk}$
such that $\m\n = \g$ (Proposition~\ref{Ewellproduct}).
So there is a single finite $I_0 \subseteq I$ outside
of which $A_i[\m] = A[\m]$ and $B_i[\n] = B[\n]$
for all such $\m,\n$.
For such $i$, we also have $(A_i B_i)[\g] = (A B)[\g]$.
\end{proof}

\begin{de}
\Def{Series} of grid-based
series.  Let $A_i, S \in \R\lbb\MM\rbb$ for $i$
in some index set $I$.  Then
\begin{equation*}
	S = \sum_{i \in I} A_i
\end{equation*}
means: there exist $\bmu$ and $\bm$ such that
$\supp A_i \subseteq \GRID^{\ebmu,\bm}$ for all $i$;
for all $\g$, the set $I_\g = \SET{i \in I}{A_i[\g] \ne 0}$ is finite;
and $S[\g] = \sum_{i \in I_\g} A_i[\g]$.
\end{de}

\begin{pr}
If $S \in \R\lbb\MM\rbb$, then the ``formal combination of group elements''
that specifies $S$ in fact converges to $S$ in this sense as well.
\end{pr}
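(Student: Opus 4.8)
The plan is to unwind the definition of the ``Series of grid-based series'' given just above. Write the given formal combination as $S = \sum_{\g \in \AA} a_\g\,\g$, with $a_\g = S[\g]$ and $\supp S \subseteq \AA \subseteq \MM$; take the index set to be $I = \AA$ and, for each $\g \in \AA$, let $A_\g$ be the one-term series $a_\g\,\g \in \R\lbb\MM\rbb$. It then suffices to verify the three clauses in the definition of $\sum_{\g \in \AA} A_\g$ and to read off that the value is $S$.

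\emph{Common grid.} Because $S \in \R\lbb\MM\rbb$, Definition~\ref{def:gridseries} supplies a ratio set $\bmu$ and an $\bm \in \Z^n$ with $\supp S \subseteq \GRID^{\ebmu,\bm}$. For $\g \in \AA$ the support of $A_\g$ is $\{\g\}$ when $a_\g \ne 0$ and $\emptyset$ when $a_\g = 0$; in the first case $a_\g = S[\g] \ne 0$ puts $\g$ into $\supp S$, so in every case $\supp A_\g \subseteq \GRID^{\ebmu,\bm}$. This is the only point at which the hypothesis that $S$ is grid-based---as opposed to merely having well ordered support---enters, and it is precisely what makes the statement hold (recall that $\sum_{j=1}^\infty x^{1/j}$ has well ordered support but is not grid-based).

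\emph{Point-finiteness and value.} Fix $\m \in \MM$. By definition $A_\g[\m] = a_\g$ if $\g = \m$ and $A_\g[\m] = 0$ otherwise, so $I_\m = \SET{\g \in \AA}{A_\g[\m] \ne 0}$ is the singleton $\{\m\}$ when $a_\m \ne 0$ and $\emptyset$ when $a_\m = 0$; in particular $I_\m$ is finite. Consequently $\sum_{\g \in I_\m} A_\g[\m]$ equals $a_\m = S[\m]$ in the first case and the empty sum $0 = S[\m]$ in the second. Hence $\big(\sum_{\g \in \AA} A_\g\big)[\m] = S[\m]$ for every $\m$, that is, $\sum_{\g \in \AA} A_\g = S$, as claimed.

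I do not expect any genuine obstacle here: the argument is pure bookkeeping, the crucial observation being that each monomial $\m$ receives a contribution from at most one of the one-term series $A_\g$, so point-finiteness is automatic. If one prefers the ``Limits'' formulation of Definition~\ref{Easympt}, one can instead check that the net of finite partial sums $\sum_{\g \in \BF} a_\g\,\g$, indexed by finite $\BF \subseteq \AA$, converges to $S$ by the same one-line observation applied to $\supp\big(S - \sum_{\g \in \BF} a_\g\,\g\big)$; but the direct verification above is the shortest route.
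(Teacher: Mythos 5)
The paper states this proposition without proof, evidently regarding it as an immediate consequence of the definitions. Your verification is correct and is exactly the obvious bookkeeping argument: taking $I = \AA$ and $A_\g = a_\g\,\g$, the grid containing $\supp S$ contains every $\supp A_\g$ (since $a_\g \ne 0$ forces $\g \in \supp S$, and $a_\g = 0$ gives empty support), and point-finiteness is automatic because each monomial is hit by at most one of the one-term series.
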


Note we have the ``nonarchimedean'' (or ``ultrametric'') Cauchy criterion:
In the asymptotic topology, a
series $\sum A_i$ converges if and only if $A_i \to 0$.

\begin{pr}\label{limzero}
Let $S \in \T^{\ebmu}$ be $\bmu$-small.  Then $(S^j)_{j \in \N} \muto 0$.
\end{pr}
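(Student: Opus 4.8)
The plan is to produce one multi-index $\bm$ that works simultaneously for all $j$; it will turn out that $\bm=\0$ does the job. First I would recall that $\R\lbb\MM\rbb$ is a ring, so each power $S^j$ is again a grid-based series, and that iterating the convolution formula for multiplication shows every element of $\supp(S^j)$ can be written $\g_1\g_2\cdots\g_j$ with each $\g_i\in\supp S$ (for $j\ge1$), while $\supp(S^0)=\{1\}$.

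Next, since $S$ is $\bmu$-small, for each $\g\in\supp S$ I would fix once and for all a multi-index $\bk(\g)\in\Z^n$ with $\bk(\g)>\0$ and $\bmu^{\bk(\g)}=\g$. Then if $\g=\g_1\cdots\g_j\in\supp(S^j)$, we have $\g=\bmu^\bk$ with $\bk:=\bk(\g_1)+\cdots+\bk(\g_j)$, and $\bk\ge\0$ because each summand has nonnegative integer coordinates; hence $\g\in\GRID^{\ebmu,\0}$. Since also $1=\bmu^\0\in\GRID^{\ebmu,\0}$, this gives $\supp(S^j)\subseteq\GRID^{\ebmu,\0}$ for every $j\in\N$, which is the first clause of Definition~\ref{Easympt}(a) with $\bm=\0$.

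For point-finiteness I would fix $\g\in\MM$. If $\g\notin\GRID^{\ebmu,\0}$ it lies in no $\supp(S^j)$, so assume $\g\in\GRID^{\ebmu,\0}$; then $\SET{\bk\in\J_\0}{\bmu^\bk=\g}$ is nonempty and, by Proposition~\ref{finitetoone} (with $\bm=\0$), finite, so $M:=\max\SET{|\bk|}{\bk\in\J_\0,\ \bmu^\bk=\g}$ is a well-defined nonnegative integer. Now if $\g\in\supp(S^j)$ with $j\ge1$, write $\g=\bmu^\bk$ with $\bk=\bk(\g_1)+\cdots+\bk(\g_j)$ as above; each $\bk(\g_i)>\0$ has integer coordinates $\ge0$ not all zero, so $|\bk(\g_i)|\ge1$, and since $|\cdot|$ is additive and $\bk\in\J_\0$, $\bmu^\bk=\g$, we get $j\le|\bk|\le M$. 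Thus $\g$ lies in $\supp(S^j)$ for at most the finitely many $j\in\{0,1,\dots,M\}$, so $(\supp(S^j))_{j\in\N}$ is point-finite. Together with the previous paragraph, Definition~\ref{Easympt}(a) gives $S^j\overset{\ebmu,\0}{\longrightarrow}0$, i.e.\ $(S^j)_{j\in\N}\muto0$.

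I do not expect a genuine obstacle, but the step that requires care is the potential non-injectivity of $\bk\mapsto\bmu^\bk$: this is precisely why ``$S$ is $\bmu$-small'' must be invoked to \emph{select} small-index representatives $\bk(\g)$, and why Proposition~\ref{finitetoone} is needed in order to bound $|\bk|$ uniformly over all representations of a fixed $\g$. The boundary value $j=0$ is the only other thing to watch, and it is handled by $\supp(S^0)=\{1\}\subseteq\GRID^{\ebmu,\0}$ together with $M\ge0$.
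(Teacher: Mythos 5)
Your proposal is correct and follows essentially the same approach as the paper's proof: fix a target monomial $\g$, use Proposition~\ref{finitetoone} to bound $|\bk|$ over the finitely many representations $\bmu^\bk=\g$, and then observe that any element of $\supp(S^j)$ has $|\bk|\ge j$, which excludes $\g$ for all sufficiently large $j$. You are a bit more explicit than the paper about the containment $\supp(S^j)\subseteq\GRID^{\ebmu,\0}$ and the $j=0$ case, but the argument is the same.
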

\begin{proof}
Every monomial in $\supp S$ can be written in the form
$\bmu^\bk$ with $\bk > \0$.  The product of two of these
is again one of these.
Let $\g_0 \in \MM$.  If $\g_0$ is not $\bmu$-small, then $\g_0 \in \supp(S^j)$
for no $j$.  So assume $\g_0$ is $\bmu$-small.  Then
there are just finitely many $\bp > \0$ such that
$\g_0 = \bmu^\bp$.  Let
\begin{equation*}
	N = \max\SET{|\bp|}{\bp > \0, \bmu^\bp = \g_0} .
\end{equation*}
Now let $j > N$.  Since every $\g \in \supp S$
is $\bmu^\bk$ with $|\bk| \ge 1$, we see that
every element of $\supp(S^j)$ is $\bmu^\bk$
with $|\bk| \ge j > N$.  So $\g_0 \not\in \supp(S^j)$.
This shows the family $(\supp(S^j))$ is point-finite.
\end{proof}

\begin{pr}\label{addendumsmall}
Let $\bmu \subseteq \Msmall$ have $n$ elements.
{\rm (a)}~Let $T \in \T^{\ebmu}$ be small.
Then there is a {\rm (}possibly larger{\rm )}
finite set $\tbmu \subseteq \Msmall$ such that
$T$ is manifestly $\tbmu$-small.
{\rm (b)}~Let $\bm \in \Z^n$.
There is a finite set $\tbmu \subseteq \Msmall$ such that
$\GRID^{\ebmu,\bm} \cap \Msmall \subseteq
\GRID^{\tebmu,\0}\setminus\{1\}$.
\hbox{{\rm (See 
\cite[Proposition~2.1]{hoeven},
\cite[{4.168}]{costinasymptotics}.)}}
\end{pr}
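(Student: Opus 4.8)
The plan is to prove part (b) first---that is where all the work sits---and then read off part (a) as an immediate corollary. Part (b) is, at bottom, just Dickson's Lemma (Proposition~\ref{magfin}) applied to the correct index set, so the whole argument is short; the two points needing care are the choice of the enlarged ratio set $\tbmu$ and the bookkeeping that distinguishes ``being a monomial other than $1$'' from ``being manifestly small''.

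For (b): consider the index set of the small monomials occurring in the grid,
\begin{equation*}
	\BE = \SET{\bk \in \J_\bm}{\bmu^\bk \fst 1} \subseteq \J_\bm .
\end{equation*}
(If $\BE = \emptyset$ there is nothing to do---take $\tbmu = \bmu$---so assume $\BE \ne \emptyset$.) By Proposition~\ref{magfin}, $\Min\BE = \{\bk_1,\dots,\bk_r\}$ is finite, and every $\bk \in \BE$ can be written $\bk = \bk_i + \bp$ with $\bp \in \N^n$ for some $i$. I would then set
\begin{equation*}
	\tbmu = \{\bmu^{\bk_1},\dots,\bmu^{\bk_r},\,\mu_1,\dots,\mu_n\} ,
\end{equation*}
which is a finite subset of $\Msmall$ since each $\bmu^{\bk_i}\in\BE$ satisfies $\bmu^{\bk_i}\fst 1$ and each $\mu_j \fst 1$ by hypothesis. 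For $\g \in \GRID^{\ebmu,\bm}\cap\Msmall$, write $\g = \bmu^\bk$ with $\bk \in \BE$ and decompose $\bk = \bk_i + \bp$; then
\begin{equation*}
	\g = \bmu^{\bk_i}\cdot\mu_1^{p_1}\cdots\mu_n^{p_n} ,
\end{equation*}
a product of elements of $\tbmu$ raised to nonnegative powers with the exponent of $\bmu^{\bk_i}$ positive. Hence $\g = \tbmu^{\tbk}$ for a multi-index $\tbk > \0$, so $\g \in \GRID^{\tebmu,\0}$; and since $\g$ is small, $\g \ne 1$, so $\g \in \GRID^{\tebmu,\0}\setminus\{1\}$. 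The same display shows, in passing, that $\g$ is manifestly $\tbmu$-small.

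For (a): if $T \in \T^{\ebmu}$ is small, then $\supp T \subseteq \GRID^{\ebmu,\bm}$ for some $\bm$ and $\supp T \subseteq \Msmall$, hence $\supp T \subseteq \GRID^{\ebmu,\bm}\cap\Msmall$. Applying (b) with this $\tbmu$, each $\g \in \supp T$ is $\tbmu^{\tbk}$ with $\tbk \ge \0$ and $\g \ne 1$; since $\tbmu^{\0} = 1$ this forces $\tbk \ne \0$, i.e.\ $\tbk > \0$, so $\g$ is manifestly $\tbmu$-small. Therefore $T$ is manifestly $\tbmu$-small, and (as $\supp T \subseteq \GRID^{\tebmu,\0}$) indeed $T \in \T^{\tebmu}$.

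The place I would be most careful is that last deduction: membership in $\GRID^{\tebmu,\0}\setminus\{1\}$ literally removes only the single monomial $1$, whereas manifest $\tbmu$-smallness demands a multi-index strictly above $\0$; these match because any representation $\g = \tbmu^{\tbk}$ with $\tbk = \0$ would give $\g = 1$. Since $\bk \mapsto \tbmu^\bk$ need not be injective, this has to be phrased as ``every representation'', not ``the representation''. Everything else---the group identity $\bmu^{\bk+\bp} = \bmu^\bk\bmu^\bp$, the inclusions $\T^{\ebmu}\subseteq\T^{\tebmu}$, and the multi-index arithmetic---is routine. So the only genuine idea is to apply Dickson's Lemma to $\BE$ and then adjoin the finitely many ``corner'' monomials $\bmu^{\bk_i}$ to the original generators.
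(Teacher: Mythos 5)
Your proof is correct and matches the paper's argument essentially step for step: same index set $\BE$, same invocation of Proposition~\ref{magfin} (Dickson's Lemma), same enlargement $\tbmu = \bmu \cup \{\bmu^{\bk} : \bk \in \Min\BE\}$, and the same decomposition of a small $\g = \bmu^\bp$ as a corner monomial times nonnegative powers of the original generators. Your final caveat about non-injectivity of $\tbk \mapsto \tbmu^{\tbk}$ is a bit over-cautious---one positive representation suffices for manifest smallness, and $\tbk = \0$ forces $\g = 1$, so any $\tbk \ge \0$ representing $\g \ne 1$ is automatically $> \0$---but the reasoning there is sound.
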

\begin{proof}
(a) follows from (b).
Let $\BE = \SET{\bk\in \J_\bm}{\bmu^\bk \fst 1}$, so that
$\GRID^{\ebmu,\bm} \cap \Msmall = \SET{\bmu^\bk}{\bk \in \BE}$.  
By Proposition~\ref{magfin}, $\Min \BE$ is finite.
Let $\tbmu = \bmu \cup \SET{\bmu^\bk}{\bk \in \Min \BE}$.
Note $\tbmu \subset \Msmall$.  It is the original
set $\bmu$ together with finitely many additional elements.
Now for any $\g \in \GRID^{\ebmu,\bm} \cap \Msmall$,
there is $\bp \in \BE$ with
$\bmu^\bp = \g$, and then there is $\bk \in \Min \BE$ with
$\bp \ge \bk$, so that $\m = \bmu^\bk \in \tbmu$
and $\g = \m \bmu^{\bp-\bk}$.
But $\bmu^{\bp-\bk}$ is
$\bmu$-small and $\m \in \tbmu$,
so $\g$ is is manifestly $\tbmu$-small.
\end{proof}

Call the set $\tbmu \setminus \bmu$ in (a) the
\Def{smallness addendum} for $T$.

\begin{com}\label{ex:musmall2}
Continue Comment~\ref{ex:musmall}: If $\bmu = \{x^{-1},e^{-x}\}$
then $xe^{-x}$ is small but not $\bmu$-small.
But if we change to $\tbmu = \{x^{-1},xe^{-x},e^{-x}\}$,
then $xe^{-x}$ is $\tbmu$-small.
\end{com}

\begin{com}
The statement like Proposition~\ref{addendumsmall}
for purely large $T$ is false.
The grid-based series
\begin{equation*}
	T = \sum_{j=0}^\infty  x^{-j} e^x
\end{equation*}
is purely large, but there is no finite set $\bmu \subseteq \Msmall$
and multi-index $\bm$ such that all $x^{-j}e^x$ have the
form $\bmu^\bk$ with $\bm \le \bk < \0$.  This is because
the set $\SET{\bk}{\bm \le \bk < \0}$ is finite.
\end{com}

\begin{pr}
Let $S \in \R\lbb\MM\rbb$ be small.  Then $(S^j)_{j \in \N} \to 0$.
\end{pr}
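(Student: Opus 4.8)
The plan is to reduce immediately to Proposition~\ref{limzero}, which already proves the assertion under the stronger hypothesis that $S$ be manifestly $\bmu$-small for some ratio set $\bmu$ supporting it. The one subtlety---emphasized in Comment~\ref{ex:musmall}---is that a small grid-based series need not be manifestly small for the particular ratio set carrying it, so the first step must be to enlarge the ratio set.

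First I would pick a ratio set $\bmu$ and a multi-index $\bm$ with $\supp S \subseteq \GRID^{\ebmu,\bm}$; these exist because $S \in \R\lbb\MM\rbb$, and hence $S \in \T^{\ebmu}$. Since $S$ is small, Proposition~\ref{addendumsmall}(a) furnishes a finite set $\tbmu \subseteq \Msmall$ (one may take $\tbmu \supseteq \bmu$) for which $S$ is manifestly $\tbmu$-small. Because $\bmu \subseteq \tbmu$ gives $\T^{\ebmu} \subseteq \T^{\tebmu}$, we conclude $S \in \T^{\tebmu}$ with $S \fst^{\tebmu} 1$.

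Now Proposition~\ref{limzero}, applied with $\tbmu$ in the role of $\bmu$, yields $(S^j)_{j \in \N} \overset{\tebmu}{\longrightarrow} 0$, i.e.\ there is a multi-index $\widetilde{\bm}$ with $(S^j)_{j \in \N} \overset{\tebmu,\widetilde{\bm}}{\longrightarrow} 0$. By Definition~\ref{Easympt}(c) this is precisely the statement $(S^j)_{j \in \N} \to 0$ in the asymptotic topology, and the proof is complete. The only place any care is needed is the point flagged above---that one cannot invoke Proposition~\ref{limzero} directly with $\bmu$, since manifest $\bmu$-smallness can genuinely fail; everything else is bookkeeping about the inclusion $\T^{\ebmu} \subseteq \T^{\tebmu}$ and unwinding the nested definitions of the three convergences $\overset{\ebmu,\bm}{\longrightarrow}$, $\muto$, and $\to$.
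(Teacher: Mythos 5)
Your proposal is correct and follows exactly the paper's own route: enlarge $\bmu$ via Proposition~\ref{addendumsmall} to a $\tbmu$ making $S$ manifestly $\tbmu$-small, then invoke Proposition~\ref{limzero}. You have simply written out the bookkeeping more explicitly than the paper does.
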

\begin{proof}
First, $S \in \T^{\ebmu}$ for some $\bmu$.  Then
$S$ is manifestly
$\tbmu$-small for some $\tbmu \supseteq \bmu$.
Therefore $S^j \overset{\tebmu}{\longrightarrow} 0$
by Proposition~\ref{limzero},
so $S^j \to 0$.
\end{proof}

\begin{pr}\label{series}
Let $\sum_{j=0}^\infty c_j z^j$ be a power series. {\rm (}Not
assumed to have positive
radius of convergence zero.{\rm )}  If $S$ is a small
grid-based series,
then $\sum_{j=0}^\infty c_j S^j$ converges in the asymptotic topology.
\end{pr}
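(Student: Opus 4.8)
The plan is to reduce the statement to the ultrametric Cauchy criterion recorded just before it: a series $\sum_j A_j$ of grid-based series converges in the asymptotic topology if and only if $A_j \to 0$. Nothing about the numerical radius of convergence of $\sum_j c_j z^j$ will be used — that is why the radius is allowed to be $0$. So the whole task is to show that $c_j S^j \to 0$ in the sense of Definition~\ref{Easympt}.

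First I would dispose of the coefficients. Set $A_j = c_j S^j$. If $c_j = 0$ then $A_j = 0$ and $\supp A_j = \emptyset$; if $c_j \ne 0$ then $\supp A_j = \supp(S^j)$. In either case $\supp A_j \subseteq \supp(S^j)$, so the family $(\supp A_j)_{j\in\N}$ sits termwise inside the family $(\supp(S^j))_{j\in\N}$. Thus any fixed grid containing all the $\supp(S^j)$ also contains all the $\supp A_j$, and point-finiteness of $(\supp(S^j))_j$ forces point-finiteness of $(\supp A_j)_j$.

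Next I would invoke the work already done on powers of a small series. Since $S$ is small and grid-based, $S \in \T^{\ebmu}$ for some ratio set $\bmu$; by Proposition~\ref{addendumsmall}(a) there is a possibly larger ratio set $\tbmu \supseteq \bmu$ for which $S$ is manifestly $\tbmu$-small. Applying Proposition~\ref{limzero} to $S$ regarded as an element of $\T^{\tebmu}$ yields an $\bm$ with $S^j \overset{\tebmu,\bm}{\longrightarrow} 0$, i.e.\ $\supp(S^j) \subseteq \GRID^{\tebmu,\bm}$ for all $j$ and $(\supp(S^j))_j$ point-finite. Combining this with the previous paragraph, the same $\tbmu$ and $\bm$ work for the $A_j$, so $A_j \overset{\tebmu,\bm}{\longrightarrow} 0$, hence $A_j \to 0$. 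Then for every $\g \in \MM$ the index set $\SET{j \in \N}{A_j[\g] \ne 0}$ is finite and all $\supp A_j$ lie in the single grid $\GRID^{\tebmu,\bm}$, which is exactly what the definition of $\sum_j A_j$ requires; so the series converges.

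I do not expect a genuine obstacle here: all the real content — point-finiteness of the powers of a manifestly small series, and the fact that smallness can be upgraded to manifest smallness by enlarging the ratio set — is packaged into Propositions~\ref{limzero} and~\ref{addendumsmall}, and the rest is bookkeeping about supports. The one step to be careful about is that one must pass to the enlarged ratio set $\tbmu$ before quoting Proposition~\ref{limzero}, since $S$ need not be manifestly $\bmu$-small for the original $\bmu$ (cf.\ Comment~\ref{ex:musmall}).
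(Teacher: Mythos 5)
Your proof is correct and takes essentially the same route as the paper's: pass to the smallness addendum $\tbmu$ via Proposition~\ref{addendumsmall}, apply Proposition~\ref{limzero} to get $S^j \to 0$, and conclude by the nonarchimedean Cauchy criterion. The paper's own proof is just a terser statement of the same argument, leaving the appeal to Proposition~\ref{limzero} and the coefficient bookkeeping implicit.
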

\begin{proof}
Use Proposition~\ref{addendumsmall}.  We need to add the smallness
addendum of $S$ to $\bmu$ to get a set $\tbmu$
such that $\sum_{j=0}^\infty c_j S^j$ is
$\tbmu$-convergent.
\end{proof}

\begin{com}\label{ex:musmall3}
Continue Comment \ref{ex:musmall2}: If $\bmu = \{x^{-1},e^{-x}\}$
then $S = xe^{-x}$ belongs to $\T^{\ebmu}$ and is small but
the series $\sum_{j=0}^\infty S^j$
is not $\bmu$-convergent.
Increase to $\tbmu = \{x^{-1},xe^{-x},e^{-x}\}$
and then $\sum_{j=0}^\infty S^j$ is $\tbmu$-convergent.
\end{com}

\begin{pr}\label{finiteseries}
Let $S_1,\cdots,S_m$ be $\bmu$-small grid-based series and let
$p_1,\cdots$, $p_m \in \Z$.  Then the family
\begin{equation*}
	\SET{\supp \big(S_1^{j_1} S_2^{j_2} \cdots S_m^{j_m}\big)}{j_1 \ge p_1,
	\dots, j_m \ge p_m}
\end{equation*}
is point-finite.  That is, all multiple Laurent series
of the form
\begin{equation*}
	\sum_{j_1=p_1}^\infty \sum_{j_2=p_2}^\infty ... \sum_{j_m=p_m}^\infty 
	c_{j_1j_2\dots j_m} S_1^{j_1} \cdots S_m^{j_m}
\end{equation*}
are $\bmu$-convergent.
\end{pr}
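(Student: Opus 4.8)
The plan is to peel off the fixed ``negative part'' of the multi-exponent and reduce to nonnegative powers, then run the multi-index analogue of the argument already used for Proposition~\ref{limzero}. Recall (Propositions~\ref{C_mult} and~\ref{series}) that every nonzero grid-based series has a grid-based multiplicative inverse; so, understanding each $S_i$ with $p_i<0$ to be nonzero, the product $P:=S_1^{p_1}S_2^{p_2}\cdots S_m^{p_m}$ is a well-defined grid-based series, supported by some grid. For any $j_1\ge p_1,\dots,j_m\ge p_m$, putting $q_i:=j_i-p_i\ge 0$ gives $S_1^{j_1}\cdots S_m^{j_m}=P\cdot\big(S_1^{q_1}\cdots S_m^{q_m}\big)$. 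Hence it suffices to prove that the family $\SET{\supp\big(S_1^{q_1}\cdots S_m^{q_m}\big)}{\mathbf q=(q_1,\dots,q_m)\in\N^m}$ is point-finite with all members contained in a single grid, and then to check that left-multiplication by the fixed series $P$ preserves those two properties; a point-finite family of subsets of one fixed grid is precisely what makes every (multiple) series with those supports converge.

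\emph{The core estimate.} Each $S_i$ is $\bmu$-small, so every $\g\in\supp S_i$ is $\bmu^{\bk}$ with $\bk>\0$, hence with $|\bk|\ge 1$. Since $\supp(AB)\subseteq(\supp A)\cdot(\supp B)$, every monomial of $\supp(S_i^{q_i})$ is a product of $q_i$ such factors, so it is $\bmu^{\bk}$ with $\bk\ge\0$ and $|\bk|\ge q_i$ (the case $q_i=0$ contributing only $1=\bmu^{\0}$). Multiplying over $i$, every monomial of $\supp\big(S_1^{q_1}\cdots S_m^{q_m}\big)$ is $\bmu^{\bp}$ with $\bp\ge\0$ and $|\bp|\ge q_1+\cdots+q_m$; in particular all these supports lie in $\GRID^{\bmu,\0}$. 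Now fix a monomial $\g_0$. If $\g_0$ is not of the form $\bmu^{\bp}$ with $\bp\ge\0$, it lies in none of the supports. Otherwise Proposition~\ref{finitetoone} (with $\bm=\0$) gives only finitely many $\bp\ge\0$ with $\bmu^{\bp}=\g_0$; let $N$ be the largest $|\bp|$ among them. Then $\g_0\in\supp\big(S_1^{q_1}\cdots S_m^{q_m}\big)$ forces $q_1+\cdots+q_m\le N$, which holds for only finitely many $\mathbf q\in\N^m$. So the family is point-finite.

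\emph{Reinstating $P$ and concluding.} Products of two subgrids are again subgrids, so $\supp P\cdot\GRID^{\bmu,\0}$ sits inside one grid; hence all $\supp\big(P\cdot S_1^{q_1}\cdots S_m^{q_m}\big)$ lie in a common grid. For point-finiteness: a monomial $\g\in\supp\big(P\cdot S_1^{q_1}\cdots S_m^{q_m}\big)$ factors as $\g=\m\n$ with $\m\in\supp P$ and $\n\in\supp\big(S_1^{q_1}\cdots S_m^{q_m}\big)\subseteq\GRID^{\bmu,\0}$, and by Proposition~\ref{Ewellproduct} (applied to the well-ordered sets $\supp P$ and $\GRID^{\bmu,\0}$) there are only finitely many such factorizations; by the core estimate each of the finitely many candidate $\n$'s lies in only finitely many of the supports, so $\g$ lies in only finitely many $\supp\big(P\cdot S_1^{q_1}\cdots S_m^{q_m}\big)$. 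Re-indexing by $j_i=p_i+q_i$ shows the family $\SET{\supp(S_1^{j_1}\cdots S_m^{j_m})}{j_1\ge p_1,\dots,j_m\ge p_m}$ is point-finite with supports in one fixed grid, hence every multiple Laurent series $\sum c_{\mathbf j}S_1^{j_1}\cdots S_m^{j_m}$ converges in the asymptotic topology. When all $p_i\ge 0$ one already has $\supp P\subseteq\GRID^{\bmu,\0}$, so the common grid stays $\GRID^{\bmu,\0}$ and the convergence is genuinely $\bmu$-convergence; if some $p_i<0$ one passes to a slightly larger ratio set.

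\emph{Main obstacle.} Nothing here is deep; the one place that needs a little care is the final bookkeeping step---that multiplying the whole family by the \emph{fixed} series $P$ does not destroy point-finiteness---and the only tool it requires is the finiteness of factorizations from Proposition~\ref{Ewellproduct}. Everything else is the multi-index version of the size bound already exploited in the proof of Proposition~\ref{limzero}.
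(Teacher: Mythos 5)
Your proposal is correct and reaches the same core estimate as the paper (bounding the total degree $|\bk|$ of any multi-index with $\bmu^\bk\in\supp(S_1^{q_1}\cdots S_m^{q_m})$ below by $q_1+\cdots+q_m$, and taking $N$ to be the largest $|\bk|$ over the finitely many $\bk\ge\0$ with $\bmu^\bk=\g_0$), but you organize the reduction differently. The paper reduces by induction on $m$ to the case $p_1=\cdots=p_m=1$, with a terse sentence asserting that the general case ``differs by a finite number of series with fewer summations''; you instead factor out the fixed prefix $P=S_1^{p_1}\cdots S_m^{p_m}$ once and for all, reduce to nonnegative exponents, and then verify explicitly, via Proposition~\ref{Ewellproduct}, that multiplication by the fixed $P$ preserves point-finiteness. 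This trades the induction for a single explicit bookkeeping step and is, if anything, cleaner. Your closing caveat---that one may need to enlarge the ratio set when some $p_i<0$---is also a sharper observation than the paper makes: a $\bmu$-small $S$ can have $S^{-1}$ fail to be supported by any $\bmu$-grid (take $\bmu=\{x^{-1},e^{-x}\}$ and $S=x^{-2}+x^{-1}e^{-x}$, so $S^{-1}=x^2-x^3e^{-x}+x^4e^{-2x}-\cdots$ has multi-indices $(-2-j,j)$ unbounded below), so ``$\bmu$-convergent'' in the conclusion strictly requires an inversion addendum in that case; the paper's inductive reduction runs into exactly the same point when it multiplies lower-dimensional sub-series by $S_i^{j_i}$ with $j_i<0$, so this is not a defect of your argument relative to the paper's.
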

\begin{proof}
An induction on $m$ shows that we may assume
$p_1 = \dots = p_m = 1$, since the series with general $p_i$
and the series with all $p_i=1$, differ from each other
by a finite number of series
with fewer summations.  So assume $p_1 = \dots = p_m = 1$.

Let $\g_0 \in \MM$.  If $\g_0$ is not $\bmu$-small,
then $\g_0 \in \supp \big(S_1^{j_1} \cdots S_m^{j_m}\big)$
for no $j_1,\cdots,j_m$.  So assume $\g_0$ is $\bmu$-small.
There are finitely many $\bk > \0$ so that
$\bmu^\bk = \g_0$.  Let
\begin{equation*}
	N = \max\SET{|\bk|}{\bk > \0, \bmu^\bk = \g_0} .
\end{equation*}
Each monomial in each $\supp S_i$ has the
form $\bmu^\bk$ with $|\bk| \ge 1$.
So if $j_1+\dots+j_m > N$, we have
$\g_0 \not\in \supp\big(S_1^{j_1} \cdots S_m^{j_m}\big)$.
\end{proof}

\begin{pr}\label{addenduminverse}
Let $A \in \T^{\ebmu}$ be nonzero.
Then there is a {\rm (}possibly larger{\rm )}
finite set
$\tbmu \subseteq \Msmall$ and $B \in \T^{\tebmu}$ such that
$B A = 1$.  The set $\R\lbb\MM\rbb$ of all grid-based
series supported by a group $\MM$ is a field.
\end{pr}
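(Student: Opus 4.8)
The plan is to reduce the inversion of $A$ to the case of a series of the form $1+S$ with $S$ small, and then to make rigorous sense of the formal geometric series $\sum_{j\ge 0}(-1)^jS^j$ by means of the asymptotic topology developed above. First I would apply the Canonical Multiplicative Decomposition (Proposition~\ref{C_mult}) to write $A = a\,\g\,(1+S)$ with $a\in\R$ nonzero, $\g\in\MM$, and $S$ small. Since $\g=\mag A$ lies in $\supp A\subseteq\GRID^{\ebmu,\bm}$, we have $\g=\bmu^{\bk_0}$ with $\bk_0\ge\bm$, so $\g^{-1}=\bmu^{-\bk_0}\in\GRID^{\ebmu}$ is a monomial; and $S=a^{-1}\g^{-1}A-1$ is supported by the grid $\GRID^{\ebmu,\bm}$ translated by $\bmu^{-\bk_0}$, namely $\GRID^{\ebmu,\bm-\bk_0}$, so $S\in\T^{\ebmu}$ is small and grid-based over the \emph{same} ratio set $\bmu$.

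Next, by Proposition~\ref{series} applied with $c_j=(-1)^j$, the series $E:=\sum_{j=0}^\infty(-1)^jS^j$ converges in the asymptotic topology; and, as in the proof of that proposition, which adjoins the smallness addendum of $S$ to $\bmu$, there is a finite $\tbmu\subseteq\Msmall$ with $\tbmu\supseteq\bmu$ such that $E\in\T^{\tebmu}$ and $S$ is manifestly $\tbmu$-small. Put $B:=a^{-1}\g^{-1}E$; since $\g^{-1}$ is a single monomial of $\GRID^{\tebmu}$, multiplying $E$ by it merely translates the underlying grid, so $B\in\T^{\tebmu}$. It then remains to verify $BA=1$, which, by commutativity, amounts to $E\,(1+S)=1$.

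For this, let $A_i:=\sum_{j=0}^i(-1)^jS^j$, so that $A_i\to E$; applying continuity of multiplication (Proposition~\ref{Emultcontin}) with the constant sequence $1+S$ gives $A_i(1+S)\to E(1+S)$. On the other hand the sum telescopes, $A_i(1+S)=1+(-1)^iS^{i+1}$, and $S^{i+1}\to 0$ by Proposition~\ref{limzero} applied with the ratio set $\tbmu$, for which $S$ is manifestly small; hence $A_i(1+S)\to 1$. Limits in the asymptotic topology are unique — if $T_i\to T$ and $T_i\to T'$, then for each $\g$ the fixed number $(T-T')[\g]=(T-T_i)[\g]+(T_i-T')[\g]$ vanishes for all large $i$, so $T=T'$ — therefore $E(1+S)=1$, i.e., $BA=1$. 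Since $\R\lbb\MM\rbb$ is closed under $+$ and $\cdot$ (as noted earlier), contains $0$ and $1$, and inherits associativity and commutativity from the Hahn-series algebra, the existence of these inverses shows it is a field.

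The only place requiring care is the bookkeeping of ratio sets: one must observe that enlarging $\bmu$ to the smallness addendum $\tbmu$ is precisely what legitimizes the geometric series, and that $\g^{-1}$ — though possibly large — still sits inside one grid over $\tbmu$. Beyond that, the single non-formal ingredient is that the identity $(1+S)^{-1}=\sum_j(-1)^jS^j$ holds not as a manipulation of symbols but through the convergence theory and the continuity of multiplication (Propositions~\ref{series}, \ref{limzero}, \ref{Emultcontin}); everything else is routine.
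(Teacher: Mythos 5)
Your proof is correct and follows essentially the same route as the paper: write $A = a\,\g\,(1+S)$ via the Canonical Multiplicative Decomposition, set $B = a^{-1}\g^{-1}\sum_j(-1)^jS^j$, invoke Proposition~\ref{series} for the convergence of the geometric series, and take $\tbmu$ to be $\bmu$ plus the smallness addendum for $S$. The only difference is that you actually carry out the telescoping/continuity argument to verify $BA=1$ (together with a short uniqueness-of-limits remark), which the paper omits as routine.
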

\begin{proof}
Write $A = c \bmu^\bk\,(1+S)$, as in \ref{C_mult},
$\bk \in \Z^n$.  Then the inverse $B$ is:
\begin{equation*}
	B = c^{-1} \bmu^{-\bk}\,\sum_{j=0}^\infty (-1)^j S^j .
\end{equation*}
Now $c^{-1}$ is computed in $\R$.  For the series,
use Proposition~\ref{series}.  Let $\tbmu$ be $\bmu$ plus
the smallness addendum for $S$.
\end{proof}

We will call $\tbmu \setminus \bmu$
the \Def{inversion addendum} for $A$.

\begin{com}\label{ex:musmall4}
Continue Comment~\ref{ex:musmall3}: If $\bmu = \{x^{-1},e^{-x}\}$
and $A = 1 + xe^{-x}$, then $A \in \T^{\ebmu}$.
But $A$ has no inverse in $\T^{\ebmu}$.
Increase to $\tbmu = \{x^{-1},xe^{-x},e^{-x}\}$
and then $A^{-1} \in \T^{\tebmu}$.
\end{com}

The algebra $\R\lbb\MM\rbb$ is an \Def{ordered field}:
If $S, T > 0$, then $S+T > 0$ and $ST>0$.  Also: if
$T_i > 0$ and $\sum T_i$ exists, then $\sum T_i > 0$.

\begin{com}
But: if $T_i \ge 0$, $T_i \to T$, then $T \ge 0$ need not follow.
Take $T_i = x^{-i}e^x - x$ and $T = -x$.
Also:
$S,T \fgt 1$ need not imply $S+T \fgt 1$.  For
example, $S = x$, $T = -x+e^{-x}$.
\end{com}

\section*{3D\quad Transseries for $x \to +\infty$}
\addcontentsline{toc}{section}{3D\quad Transseries for $x \to +\infty$}%
For (real, grid-based) \Def{transseries}, we define a specific ordered
group $\G$ of \Def{transmonomials} to use for $\MM$.  This is done in stages.

\begin{com}
A symbol ``$x$'' appears in the notation.  When we think
of a transseries as describing behavior as $x \to +\infty$,
then $x$ is supposed to be a large parameter.  When we write
``compositions'' involving transseries, $x$ represents the
identity function.  But usually it is just a convenient symbol.
\end{com}

\begin{no}
Group $\G_0$ is isomorphic to $\R$ with addition and the usual
ordering.  To fit our applications, we write $x^b$ for the group
element corresponding to $b \in \R$.  Then
$x^a x^b = x^{a+b}$; $x^0 = 1$; $x^{-b}$ is the inverse of $x^b$;
$x^a \fst x^b$ iff $a < b$.

Log-free transseries of \Def{height zero} are those obtained from
this group as in Definition~\ref{def:gridseries}.  Write
$\T_0 = \R\lbb\G_0\rbb$.  Then the set of purely large transseries in
$\T_0$ (including $0$) is a group under addition.
\end{no}

\begin{com}
Transseries of height zero:
\begin{equation*}
	-x^3+2x^2-x, \qquad
	\sum_{j=1}^\infty \sum_{k=1}^\infty x^{-j-k\sqrt{2}} .
\end{equation*}
The first is purely large, the second is small.
\end{com}

\begin{no}
Group $\G_1$ consists of ordered pairs $(b,L)$---but written
$x^b e^L$---where $b \in \R$ and $L \in \T_0$ is purely large.  Define
the group operation:
$(x^{b_1} e^{L_1})\,(x^{b_2} e^{L_2})
= x^{b_1+b_2}\,e^{L_1+L_2}$.  Define order lexicographically:
$(x^{b_1} e^{L_1}) \fgt (x^{b_2} e^{L_2})$ iff either
$L_1 > L_2$ or \{$L_1 = L_2$ and $b_1 > b_2$\}.
Identify $\G_0$ as a subgroup of $\G_1$, where
$x^b$ is identified with $x^b e^0$.

Log-free transseries of \Def{height} $1$ are those obtained from
this group as in Definition~\ref{def:gridseries}.  Write
$\T_1 = \R\lbb\G_1\rbb$.  We may identify $\T_0$ as a subset
of $\T_1$.  Then the set of purely large transseries in
$\T_1$ (including $0$) is a group under addition.
\end{no}

\begin{com}
Transseries of height $1$:
\begin{equation*}
	e^{-x^3+2x^2-x}, \qquad
	\sum_{j=1}^\infty x^{-j} e^x , \qquad
	x^3+e^{-x^{3/4}} .
\end{equation*}
The first is small, the second is purely large, the
last is large but not purely large.
\end{com}

\begin{no}
Suppose log-free transmonomials $\G_N$ and log-free transseries
$\T_N$ of height $N$ have been defined.
Group $\G_{N+1}$ consists of ordered pairs $(b,L)$
but written $x^b e^L$,
where $b \in \R$ and $L \in \T_N$ is purely large.  Define
the group operation:
$(x^{b_1} e^{L_1})\,(x^{b_2} e^{L_2})
= x^{b_1+b_2}\,e^{L_1+L_2}$.  Define order:
$(x^{b_1} e^{L_1}) \fgt (x^{b_2} e^{L_2})$ iff either
$L_1 > L_2$ or \{$L_1 = L_2$ and $b_1 > b_2$\}.

Identify $\G_N$ as a subgroup of $\G_{N+1}$ recursively.

Log-free transseries of height $N+1$ are those obtained from
this group as in Definition~\ref{def:gridseries}.  Write
$\T_{N+1} = \R\lbb\G_{N+1}\rbb$.  We may identify $\T_N$ as a subset
of $\T_{N+1}$.
\end{no}

\begin{com}
Height 2:
$e^{-e^x}, e^{\sum_{j=1}^\infty x^{-j} e^x}$.
\end{com}

\begin{no}
The group of log-free \Def{transmonomials} is
\begin{equation*}
	\G_{\bullet} = \bigcup_{N \in \N} \G_N .
\end{equation*}
The field of log-free \Def{transseries} is
\begin{equation*}
	\T_{\bullet} = \bigcup_{N \in \N} \T_N .
\end{equation*}
In fact, $\T_{\bullet} = \R\lbb\G_{\bullet}\rbb$ because each individual transseries is grid-based.  Any grid in $\G_\bullet$ is contained
in $\G_N$ for some $N$.
\end{no}

A ratio set $\bmu$ is \Def{hereditary} if for every
transmonomial $x^be^L$ in $\bmu$, we also have $L \in \T^{\ebmu}$.
Of course, given any ratio set $\bmu \subseteq \Gsmall$,
there is a hereditary ratio set
$\tbmu \supseteq \bmu$.  (When we add a set of generating ratios
for the exponents $L$, then sets of generating ratios for their exponents,
and so on, the process ends in finitely many steps, by induction
on the heights.  We do need to know: If $T \in \T_\bullet$,
then the union
\begin{equation*}
	\bigcup_{x^be^L \in \supp T} \supp L
\end{equation*}
is a subgrid.)
Call $\tbmu \setminus \bmu$ the \Def{heredity addendum}
of $\bmu$.

\begin{re}
If $\MM$ is a group, then $\R\lbb\MM\rbb$ is a field.  In particular,
$\T_N = \R\lbb\G_N\rbb$ is a field ($N=0,1,2,\cdots$).
\end{re}

\begin{pr}\label{Elogfreepower}
Let $A$ be a nonzero log-free transseries.  If $A \fgt 1$, then
there exists a real number $c>0$ such that $A \fgt x^c$.
If $A \fst 1$, then there exists a real number $c<0$
such that $A \fst x^c$.
\end{pr}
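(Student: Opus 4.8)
The plan is to reduce the statement to a claim about the single transmonomial $\mathfrak{g} = \mag A$, and then unwind the recursive lexicographic definition of the order on $\G_\bullet$. Recall from Definition~\ref{magdef} and the notation for $\fgt$ between series that, for nonzero log-free transseries, $A \fgt x^c$ holds iff $\mag A \fgt x^c$, and $A \fst x^c$ holds iff $\mag A \fst x^c$; moreover $A \fgt 1$ iff $\mag A \fgt 1$ and $A \fst 1$ iff $\mag A \fst 1$. Thus it suffices to produce a real $c$ of the appropriate sign with $\mathfrak{g} \fgt x^c$ (resp. $\mathfrak{g} \fst x^c$).

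Since $\mathfrak{g}$ is a log-free transmonomial, write $\mathfrak{g} = x^b e^L$ with $b \in \R$ and $L$ purely large --- possibly $L = 0$, in which case $\mathfrak{g} = x^b$. Regard $x^c$ as $x^c e^0$. By the lexicographic order defining $\G_{N+1}$, we have $\mathfrak{g} \fgt x^c$ iff $L > 0$, or $L = 0$ and $b > c$; and $\mathfrak{g} \fst x^c$ iff $L < 0$, or $L = 0$ and $b < c$. In particular, if $L > 0$ then $\mathfrak{g} \fgt x^c$ for every real $c$, and if $L < 0$ then $\mathfrak{g} \fst x^c$ for every real $c$.

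Now suppose $A \fgt 1$, i.e. $\mathfrak{g} \fgt x^0 e^0$, so $L > 0$ or ($L = 0$ and $b > 0$). In the first case take $c = 1$; in the second take $c = b/2$, so that $0 < c < b$ and $x^b \fgt x^{b/2}$. Either way $c > 0$ and $A \fgt x^c$. The case $A \fst 1$ is symmetric: then $L < 0$ or ($L = 0$ and $b < 0$); take $c = -1$ in the first case and $c = b/2 < 0$ in the second, so $A \fst x^c$. There is no real obstacle here; the one point to keep in mind is that ``purely large'' does not imply ``positive'', since a purely large $L$ such as $-x$ is negative and then $x^b e^L$ is actually $\fst 1$, so one must branch on the sign of $L$ in the ordered field $\T_{\bullet}$ rather than merely on whether $L$ is nonzero or large.
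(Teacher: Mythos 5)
Your proof is correct and follows essentially the same route as the paper's: reduce to the magnitude $\mag A = x^b e^L$, branch on whether $L > 0$ (take $c = 1$) or $L = 0$ with $b > 0$ (take $c = b/2$), with the $A \fst 1$ case symmetric. You are somewhat more explicit than the paper about why the lexicographic order forces $L \geq 0$ when $A \fgt 1$ and about the caveat that purely large does not mean positive, but the underlying argument is identical.
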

\begin{proof}
Let $\mag A = x^b e^L \fgt 1$.
If $L=0$, then $b>0$, so take $c=b/2$.  If $L > 0$,
$A \fgt x^1$, since $\fgt$ is defined lexicographically.
The other case is similar.
\end{proof}

\begin{pr}[Height Wins]\label{Eheightwins}
Let $L>0$ be purely large of height $N$ and not $N-1$, let
$b \in \R$, and let
$T\ne 0$ be of height $N$.  Then $x^b e^L \fgt T$
and $x^b e^{-L} \fst T$.
\end{pr}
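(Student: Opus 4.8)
The plan is to unwind the lexicographic order on $\G_{N+1}$, reduce both assertions to a single comparison of exponents, and prove that comparison by induction on $N$.

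Since $L$ is purely large and lies in $\T_N\setminus\T_{N-1}$, the transmonomials $x^be^{L}$ and $x^be^{-L}$ belong to $\G_{N+1}$ (negation preserves ``purely large''). As $\fgt$ and $\fst$ between nonzero transseries are decided by magnitudes and $\mag(x^be^{\pm L})=x^be^{\pm L}$ has leading coefficient $1$, it suffices to compare these two monomials with $\mag T=x^{b_1}e^{L_1}$, where by construction of $\G_N$ the exponent $L_1$ lies in $\T_{N-1}$. Running the definition of $\fgt$ on $\G_{N+1}$ — and noting that the secondary ($b$-)comparison never occurs, because $\pm L\in\T_N\setminus\T_{N-1}$ while $L_1\in\T_{N-1}$, so $\pm L\neq L_1$ — the two target statements $x^be^{L}\fgt T$ and $x^be^{-L}\fst T$ become precisely $L>L_1$ and $L>-L_1$. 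Both $L_1$ and $-L_1$ lie in $\T_{N-1}$, so the proposition follows from the claim, to be proved by induction on $N$: \emph{if $P>0$ is purely large of height exactly $N$ and $Q\in\T_{N-1}$ (possibly $0$), then $P>Q$.}

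The base case $N=0$ is trivial: then $Q=0$ and $P>0$ is the hypothesis. For the inductive step, the crucial point — the only place ``purely large'' is used — is that $\mag P$ itself has height exactly $N$. Write $\mag P=x^ce^{M}$ with $M\in\T_{N-1}$ purely large; if $M$ had height $\le N-2$, pick $\g=x^{c'}e^{M'}\in\supp P$ of height exactly $N$ (possible since $P\notin\T_{N-1}$), so that $M'$ is purely large of height exactly $N-1$ and $M'>0$ (because $\g\fgt 1$); then $M'>M$ by the inductive hypothesis, whence $\g=x^{c'}e^{M'}\fgt x^ce^{M}=\mag P$, contradicting maximality of $\mag P$. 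Granting this, $M>0$ is purely large of height exactly $N-1$. If $Q=0$ we are done; otherwise $\mag Q=x^{b''}e^{L''}$ with $L''\in\T_{N-2}$, so the inductive hypothesis gives $M>L''$, hence $\mag P=x^ce^{M}\fgt x^{b''}e^{L''}=\mag Q$. Therefore $\mag(P-Q)=\mag P$ and $\dom(P-Q)=\dom P>0$, i.e. $P>Q$.

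The main obstacle I expect is not any single computation but keeping the heights straight through the nested recursion: at each use of the inductive hypothesis one must verify the data is of the stated height (that $M'$ really has height exactly $N-1$, that $M$ and $L''$ have dropped to height $\le N-2$, etc.) and that negation does not disturb ``purely large''. The genuine mathematical content is the single observation highlighted above — the magnitude of a purely large transseries of full height again has full height — and this is exactly what forces an exponential built from height-$N$ data to dominate every transseries of height $N$.
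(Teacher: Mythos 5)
Your proof is correct and follows the same overall strategy as the paper's: unwind the lexicographic order to reduce both assertions to a comparison of exponents, then argue by induction on the height. The difference is one of completeness rather than of route. The paper's proof is extremely terse — it says ``$L_1 \in \T_{N-1}$, and therefore by the induction hypothesis $\dom(L-L_1) = \dom(L) > 0$'' — and silently uses the fact that $\mag L$ itself has exact height $N$, which is what permits the inductive comparison of the exponent of $\mag L$ against $L_1$. Your write-up isolates precisely this point, states it as a clean auxiliary claim (a purely large transseries of exact height $N$ dominates every transseries of height $\le N-1$), and proves the ``magnitude has full height'' lemma by a contradiction inside the induction. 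You also treat the $x^b e^{-L} \fst T$ half explicitly by passing to $-L_1$, whereas the paper leaves it as an unstated ``similar.'' Two cosmetic points worth being aware of: the base case $N=0$ relies on the convention $\T_{-1}=\{0\}$, which the paper never formally defines (though everything you need is harmless either way); and when you assert ``$M'>0$ because $\g\fgt 1$,'' the full reason is that $\g \fgt 1$ forces $M'>0$ \emph{or} ($M'=0$ and $c'>0$), and $M'\ne 0$ since $M'$ has exact height $N-1\ge 0$. You clearly know this — it is the same bookkeeping you flag at the end as the real hazard of the argument — but it is worth writing out since the whole proof pivots on keeping exact heights positive.
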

\begin{proof}
By induction on the height.
Let $\mag T = x^{b_1} e^{L_1}$.  So $L_1 \in \T_{N-1}$,
and therefore by the induction hypothesis $\dom(L-L_1) = \dom(L) > 0$.
So $L > L_1$ and $x^b e^L \fgt x^{b_1} e^{L_1}$.
\end{proof}

If $\n \in \G_N\setminus \G_{N-1}$ (we say $\n$ has \Def{exact height} $N$),
then either (i)~$\n \fgt 1$ and $\n \fgt \m$ for all $\m \in \G_{N-1}$,
or (ii)~$\n \fst 1$ and $\n \fst \m$ for all $\m \in \G_{N-1}$.
[We say $\G_{N-1}$ is \Def{convex} in $\G_N$.]

\begin{com}
$\n = e^{-e^x}$ has exact height $2$, and
$T = \sum_{k=1}^\infty \sum_{j=1}^\infty x^{-j} e^{-kx}$ has height $1$, so
of course $\n \fst T$.  Even more: $T/\n$ is purely large.
\end{com}

\subsection*{Derivative}

\begin{de}
\Def{Derivative} (notations ${}'$, $\partial$) is defined recursively.
First, $(x^a)' = a x^{a-1}$.  (If we are keeping track
of generating ratios, we may need the addendum
of ratio $x^{-1}$.)
If $\partial$ has been defined for $\G_N$,
then define it termwise for $\T_N$:
\begin{equation*}
	\left(\sum a_{\g} \g\right)' = \sum a_{\g} \g' .
\end{equation*}
(See the next proposition
for the proof that this makes sense.)
Then, if $\partial$ has been defined for $\T_N$,
define it on $\G_{N+1}$ by
\begin{equation*}
	\left(x^b e^L\right)' =
	b x^{b-1} e^L + x^{b} L' e^L 
	= \big(bx^{-1}+L'\big) x^b e^L .
\end{equation*}
\end{de}

For the \Def{derivative addendum} $\tbmu$:
begin with $\bmu$, add the heredity addendum of $\bmu$,
and add $x^{-1}$.  So (by induction) if
$T \in \T^{\ebmu}$, then $T' \in \T^{\tebmu}$.  Repeating
the derivative addendum adds nothing new, so in fact
all derivatives $T^{(j)}$ belong to $\T^{\tebmu}$.

\begin{re}
This derivative satisfies all the usual algebraic properties
of the derivative.  There are just lots of tedious things to
check.  $(AB)' = A' B + A B'$, $(S^k)' = k S' S^{k-1}$, etc.
\end{re}

\begin{pr}\label{Ederivexist}
Let $\bmu$ be given.  Let $\tbmu$ be as described.
{\rm (i)} If $T_i \muto T$ then 
$T_i' \overset{\tebmu}{\longrightarrow} T'$.
{\rm (ii)} If $\sum T_i$ is $\bmu$-convergent, then
$\sum T'_i$ is $\tbmu$-convergent and
$\big(\sum T_i\big)' = \sum T'_i$.
{\rm (iii)} If $\AA \subseteq \GRID^{\ebmu,\bm}$, then
$\sum_{\g \in \AA} a_\g \g'$ is $\tbmu$-convergent.
\end{pr}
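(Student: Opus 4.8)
The plan is to isolate one combinatorial lemma from which all three parts follow by bookkeeping. \emph{The Lemma:} for every monomial $\g \in \GRID^{\ebmu,\bm}$ the support $\supp \g'$ is contained in a single grid $\GRID^{\tebmu,\tilde{\bm}}$ with $\tilde{\bm}$ depending only on $\bmu$ and $\bm$, and the family $(\supp \g')_{\g \in \GRID^{\ebmu,\bm}}$ is point-finite.

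I would prove the Lemma using the logarithmic derivative. Pick $\bk \ge \bm$ with $\g = \bmu^\bk$; since $\partial$ is additive on products, $\g' = \g\cdot\sum_{i=1}^n k_i\,(\mu_i'/\mu_i)$, and writing $\mu_i = x^{b_i}e^{L_i}$ gives $\mu_i'/\mu_i = b_i x^{-1} + L_i'$. Because $\tbmu$ is hereditary it contains each $L_i$, whence $L_i' \in \T^{\tebmu}$ — this is the step that must be woven into an induction on exponential height, with base case $\G_0$ (where $x^b\mapsto bx^{b-1}$ is transparent) and where part~(iii) of the present proposition is exactly what gives meaning to the termwise derivative at lower heights. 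Thus each $\mu_i'/\mu_i$ lies in some $\GRID^{\tebmu,\bm^{(i)}}$; pooling the finitely many of them, $\BB := \bigcup_i \supp(\mu_i'/\mu_i)$ is well ordered and contained in one grid $\GRID^{\tebmu,\bm^{\ast}}$, and the support of every $\R$-linear combination of the $\mu_i'/\mu_i$ lies in $\BB$. Hence $\supp\g' \subseteq \{\bmu^\bk\}\cdot\BB \subseteq \GRID^{\ebmu,\bm}\cdot\GRID^{\tebmu,\bm^{\ast}}$, which is a single grid $\GRID^{\tebmu,\tilde{\bm}}$ by the properties of subgrids listed earlier (a product of subgrids, and a supergrid of a subgrid, is again a subgrid). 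For point-finiteness: if $\n \in \supp\g'$ then $\n = \g\,\p$ for some $\p\in\BB$, and since $\GRID^{\ebmu,\bm}$ and $\BB$ are both well ordered, the finiteness clause of Proposition~\ref{Ewellproduct} bounds the number of such factorizations, so the set $\{\g : \n\in\supp\g'\}$ is finite.

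Granting the Lemma, part~(iii) is immediate: scaling each $\g'$ by $a_\g$ only deletes monomials, so $(\supp(a_\g\g'))_{\g\in\AA}$ is point-finite and contained in $\GRID^{\tebmu,\tilde{\bm}}$, which is the definition of $\tbmu$-convergence of $\sum_{\g\in\AA}a_\g\g'$. For~(i), first note $\supp T \subseteq \GRID^{\ebmu,\bm}$ (any $\g\in\supp T$ has $T_i[\g]=T[\g]\ne0$ for all but finitely many $i$), so $R_i := T_i-T$ is supported there; by $\R$-linearity of the termwise $\partial$ on a common grid, $T_i'-T' = R_i'$, and the Lemma puts every $\supp R_i'$ inside the fixed $\GRID^{\tebmu,\tilde{\bm}}$. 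Fixing $\n$, the coefficient $R_i'[\n] = \sum_{\g\in G_\n} R_i[\g]\,\g'[\n]$ runs over the finite set $G_\n := \{\g : \n\in\supp\g'\}$, so $\{i : \n\in\supp R_i'\} \subseteq \bigcup_{\g\in G_\n}\{i : \g\in\supp R_i\}$, a finite union of finite sets (finite by point-finiteness of $(\supp(T_i-T))$); hence $(\supp(T_i'-T'))_i$ is point-finite and $T_i' \overset{\tebmu}{\longrightarrow} T'$. Part~(ii) is the same computation applied to $\sum T_i$ itself ($\{i : \n\in\supp T_i'\}\subseteq\bigcup_{\g\in G_\n}I_\g$ with each $I_\g=\{i:T_i[\g]\ne0\}$ finite), and the identity $(\sum T_i)' = \sum T_i'$ drops out by comparing coefficients at each $\n$: with $S=\sum T_i$ one has $S[\g]=\sum_{i\in I_\g}T_i[\g]$, so $S'[\n] = \sum_{\g\in G_\n}\sum_{i\in I_\g}T_i[\g]\,\g'[\n]$ — all sums finite — and interchanging the two summations gives $\sum_i T_i'[\n] = (\sum_i T_i')[\n]$.

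The main obstacle is the Lemma, and inside it the two genuinely substantive points: that $\supp\g'$ stays inside \emph{one} grid uniformly in $\g$ (which relies on the finitely many logarithmic derivatives $\mu_i'/\mu_i$ pooling into the single subgrid $\BB$), and the mild circularity that ``$T\in\T^{\tebmu}\Rightarrow T'\in\T^{\tebmu}$'' — needed to locate $L_i'$ — is itself part of what is being established, so the honest write-up is a single induction on exponential height rather than three separate arguments. Everything after the Lemma is pure point-finiteness bookkeeping with no analytic content.
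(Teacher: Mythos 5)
Your proof is correct and takes essentially the same route as the paper's: both rest on the logarithmic-derivative identity $\g'/\g = \sum_i k_i\,\mu_i'/\mu_i$, the induction on exponential height to get each $L_i' \in \T^{\tebmu}$, and the subgrid and well-ordering machinery to contain $\supp\g'$ inside a single grid. The paper's version is terser---it writes $T' = x^{-1}T_0 + L_1'T_1 + \cdots + L_n'T_n$, effectively establishing (iii) while leaving the explicit point-finiteness check (your appeal to Proposition~\ref{Ewellproduct}, which the paper only later packages as the map $\lsupp$) and the bookkeeping for parts (i) and (ii) to the reader---so your Lemma-plus-bookkeeping write-up is a faithfully fleshed-out version of the same argument.
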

\begin{proof}
(iii) is stated equivalently: the family $(\supp \g')$ is
point-finite.  Or: as $\g$ ranges over $\GRID^{\ebmu,\bm}$,
we have $\g' \overset{\tebmu}{\longrightarrow} 0$.

Proof by induction on the height.

Say $\mu_1 = x^{-b_1}e^{-L_1}, \cdots, \mu_n = x^{-b_n}e^{-L_n}$,
and $\bk = (k_1,\cdots,k_n)$.
Then
\begin{align*}
	\big(\bmu^\bk\big)'
	&=  \left(x^{-k_1 b_1 - \cdots - k_n b_n}
	e^{-k_1 L_1 - \cdots - k_n L_n}\right)'
	\\ &=  (-k_1 b_1 - \cdots - k_n b_n) x^{-1}\bmu^\bk
	+ (-k_1 L'_1 - \cdots - k_n L'_n) \bmu^\bk .
\end{align*}
So if $T = \sum_{\bk \ge \bm} a_{\bk} \bmu^\bk$, then
summing the above transmonomial result, we get
\begin{equation*}
	T' = x^{-1} T_0 + L'_1 T_1 + \dots + L'_n T_n,
\end{equation*}
where $T_0,\cdots, T_n$ are transseries with the same
support as $T$, and therefore they exist in $\T^{\ebmu,\bm}$.
Derivatives $L'_1, \cdots, L'_n$ exist by induction hypothesis.
So $T'$ exists.
\end{proof}

The preceding proof suggest the following.  Think of
$\lsupp$ as ``the support of the logarithmic derivative''
for monomials.

\begin{de}\label{Elsupp}
For (log-free)
monomials, define
\begin{equation*}
	\lsupp(x^b e^L) = \{x^{-1}\} \cup \supp L' .
\end{equation*}
For a set $\AA \subseteq \G_{\bullet}$, define
$\lsupp \AA = \bigcup_{\g \in E} \lsupp \g$.
For $T \in \T_{\bullet}$, define
$\lsupp T = \lsupp\supp T$.  For a set $\A \subseteq \T_{\bullet}$,
define $\lsupp \A = \bigcup_{T \in \A} \lsupp T$.
\end{de}

\begin{pr} Properties of $\lsupp$.
\begin{enumerate}
\item[{\rm (a)}] If $\bmu = \{\mu_1,\cdots,\mu_n\} \subseteq \Gsmall_{\bullet}$
and $\bk \in \Z^n$, then $\lsupp \bmu^\bk \subseteq \lsupp \bmu$.
So $\lsupp \T^{\ebmu} \subseteq \lsupp \bmu$.
\item[{\rm (b)}] For any finite $\bmu \subseteq \Gsmall_{\bullet}$, $\lsupp(\bmu)$
is a subgrid.
\item[{\rm (c)}] If $T \in \T^{\ebmu}$, then
$\supp(T') \subseteq \lsupp(\bmu)\cdot \supp(T)$.
\item[{\rm (d)}] If $\tbmu$ is the smallness addendum
for some $S \in \T^{\ebmu}$, then
$\lsupp \widetilde \bmu \subseteq \lsupp \bmu$.
\item[{\rm (e)}] If $\g \in \G_N$, $N \ge 1$, then
$\lsupp \g \subseteq \G_{N-1}$.
\item[{\rm (f)}] If $\g \in \G_0$, $\g \ne 1$, then
$\lsupp \g = \{x^{-1}\}$.
\item[{\rm (g)}] Each $\T_N$ is closed under $\partial$.
\end{enumerate}
\end{pr}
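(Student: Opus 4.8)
The plan is to prove (a) first, as the basic ``closure'' statement, then to read off (b), (c), (d) from it together with Proposition~\ref{Ederivexist}, and finally to treat the height-related items (g), (e), (f), of which only (g) needs real care. For (a), write each ratio as $\mu_i = x^{-b_i}e^{-L_i}$ (the signs are harmless for support bookkeeping since $\mu_i \fst 1$), so that $\bmu^\bk = x^{-\sum_i k_i b_i}\,e^{-\sum_i k_i L_i}$ and hence
\[ \lsupp\bigl(\bmu^\bk\bigr)=\{x^{-1}\}\cup\supp\bigl(\textstyle\sum_i k_i L_i'\bigr)\subseteq\{x^{-1}\}\cup\bigcup_i\supp L_i'=\bigcup_i\lsupp\mu_i=\lsupp\bmu , \]
using only that the support of a finite linear combination sits inside the union of the supports. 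Since every $\g\in\supp T$ for $T\in\T^{\ebmu}$ is of the form $\bmu^\bk$, taking the union over $\supp T$ yields $\lsupp T\subseteq\lsupp\bmu$, i.e.\ $\lsupp\T^{\ebmu}\subseteq\lsupp\bmu$.

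Then (d) is immediate: the smallness addendum of $S$ consists of finitely many monomials, each of the form $\bmu^\bk$ (Proposition~\ref{addendumsmall}), so applying (a) index by index gives $\lsupp\tbmu\subseteq\lsupp\bmu$. For (c), the monomial-by-monomial formula from the proof of Proposition~\ref{Ederivexist}, namely $(\bmu^\bk)'=\bigl(-\sum_i k_i b_i\bigr)x^{-1}\bmu^\bk+\sum_i(-k_i L_i')\bmu^\bk$, shows directly that $\supp\bigl((\bmu^\bk)'\bigr)\subseteq\lsupp(\bmu)\cdot\{\bmu^\bk\}$; summing termwise then gives $\supp(T')\subseteq\lsupp(\bmu)\cdot\supp(T)$ for $T\in\T^{\ebmu}$. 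For (b), each $L_i$ is a log-free, grid-based (purely large) transseries, so $L_i'$ is again grid-based by Proposition~\ref{Ederivexist} and the surrounding remarks on the derivative addendum, whence $\supp L_i'$ is a subgrid; thus $\lsupp\bmu=\{x^{-1}\}\cup\bigcup_i\supp L_i'$ is a finite union of subgrids, and finite unions of subgrids are subgrids, so $\lsupp\bmu$ is a subgrid.

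It remains to follow the derivative through the exponential heights. I would handle (g) first: given $T\in\T_N$, pick a ratio set $\bmu\subseteq\Gsmall_N$ with $T\in\T^{\ebmu}$; the heredity addendum of $\bmu$ adjoins only transmonomials of strictly smaller exponential height, hence still members of $\G_N$, and the derivative addendum adjoins in addition $x^{-1}\in\G_0\subseteq\G_N$, so the derivative addendum $\tbmu$ still lies in $\Gsmall_N$, whence $T'\in\T^{\tebmu}\subseteq\R\lbb\G_N\rbb=\T_N$ by Proposition~\ref{Ederivexist}. Granting (g), item (e) is immediate: for $\g=x^be^L\in\G_N$ with $N\ge1$ we have $L\in\T_{N-1}$ purely large, so $L'\in\T_{N-1}$ by (g), whence $\lsupp\g=\{x^{-1}\}\cup\supp L'\subseteq\G_{N-1}$. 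And (f) is the degenerate case $\g=x^b\in\G_0$, $\g\ne1$: here $L=0$, so $\supp L'=\supp 0=\emptyset$ and $\lsupp\g=\{x^{-1}\}$.

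The one genuine obstacle is the height bookkeeping in (g): one must check that replacing $\bmu$ by a hereditary ratio set and then adjoining $x^{-1}$ never raises the exponential height, so that the derivative of a height-$N$ series lands in $\T_N$ and not merely in $\T_\bullet$. This is the single place where one looks inside the heredity-addendum construction rather than just quoting an earlier proposition; everything else is unwinding the definition of $\lsupp$ and citing Proposition~\ref{Ederivexist} for convergence and for the location of derivatives.
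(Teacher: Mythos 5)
Your argument for (a)–(d) matches the paper's proof essentially line by line: the explicit computation of $\lsupp(\bmu^\bk)$ for (a), the union-of-subgrids argument for (b), the monomial-by-monomial derivative formula from Proposition~\ref{Ederivexist} for (c), and the observation that the smallness addendum consists of monomials $\bmu^\bk$ for (d). Where you diverge from the paper is in the treatment of (e), (f), (g). The paper takes (e) and (f) as immediate from the recursive definition of $\partial$ (which hands you $L'\in\T_{N-1}$ directly), and then derives (g) from (c) and (e): if $T\in\T^{\ebmu}$ with $\bmu\subseteq\Gsmall_N$, then $\supp(T')\subseteq\lsupp(\bmu)\cdot\supp(T)\subseteq\G_{N-1}\cdot\G_N\subseteq\G_N$, while grid-basedness of $T'$ is supplied by Proposition~\ref{Ederivexist}. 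You instead prove (g) first, by inspecting the heredity and derivative addenda and checking that they never raise the exponential height, and then read off (e) as a corollary. Both routes are valid and both ultimately rest on the same recursive height bookkeeping; the paper's route has the advantage of reusing the $\lsupp$ machinery just built up in parts (a)–(c) of this very proposition, while your route is slightly more self-reliant in that it makes the addendum-height check explicit (content the paper postpones to Proposition~\ref{addendumheight}, stated later and without proof). Your instinct that the height bookkeeping in (g) is ``the one genuine obstacle'' is sound; the paper glosses over it, and the two items (e) and (g) are in fact tangled up by a single induction on height that neither you nor the paper spells out in full, though both arguments can be straightened into a correct induction without difficulty.
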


\begin{proof}
(a) Say $\mu_1 = x^{-b_1} e^{-L_1},\cdots,\mu_n = x^{-b_n} e^{-L_n}$.
Then
\begin{align*}
	\bmu^\bk &=  x^{-k_1 b_1 -\dots - k_n b_n}
	e^{-k_1 L_1 -\cdots -k_n L_n} ,\text{ so}
	\\
	\lsupp \bmu^\bk &=  \{x^{-1}\} \cup \supp(-k_1 L'_1 -\cdots -k_n L'_n)
	\\ &\subseteq
	\{x^{-1}\} \cup \supp(L'_1) \cup \cdots \cup \supp(L'_n) = \lsupp \bmu.
\end{align*}

(b) Each $\supp L'_i \in \W$ and $\{x^{-1}\} \in \W$, so their (finite)
union also belongs to $\W$.

(c) Proposition~\ref{Ederivexist}.

(d) Use the proof of Proposition~\ref{addendumsmall} together with (a).

(e) and (f) are clear.

(g) Use (c) and (e).
\end{proof}

Similar to (d):
If $\tbmu$ is the inversion addendum
for some $A \in \T^{\ebmu}$, then
$\lsupp \widetilde \bmu \subseteq \lsupp \bmu$.

\begin{re}
Note $\partial$ maps $\G_N$ into $\T_N$, so $\T_N$ is
closed under $\partial$.
\end{re}

Here are a few technical results on derivatives.
They lead to Proposition~\ref{constderiv},
where we will prove that
$T'=0$ only if $T$ is ``constant''
in the sense used here, that $T \in \R$.

\begin{pr}\label{nox^-1}
There is no $T \in \T_{\bullet}$ with $T' = x^{-1}$.
\end{pr}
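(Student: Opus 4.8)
The plan is to argue by contradiction: suppose $T \in \T_\bullet$ satisfies $T' = x^{-1}$. I would first reduce to the log-free case and then track the magnitude of $T$ through differentiation to derive a contradiction from the antiderivative structure of $x^{-1}$. Since $x^{-1}$ is log-free of height $0$, and since differentiation does not raise height (each $\T_N$ is closed under $\partial$ by the preceding proposition), it is natural to expect $T$ itself is log-free of some height $N$; more carefully, I would work inside a fixed hereditary ratio set $\bmu$ so that $T \in \T^{\ebmu}$ and $x^{-1} \in \lsupp \bmu$, and induct on the height $N$ of $T$.

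The key computation is the formula for $T'$ obtained in the proof of Proposition~\ref{Ederivexist}: writing $\mu_i = x^{-b_i} e^{-L_i}$ and $T = \sum_{\bk \ge \bm} a_{\bk}\bmu^\bk$, we have
\begin{equation*}
	\big(\bmu^\bk\big)' = \bigl(-\textstyle\sum_i k_i b_i\bigr)x^{-1}\bmu^\bk + \bigl(-\textstyle\sum_i k_i L'_i\bigr)\bmu^\bk .
\end{equation*}
The upshot is that every monomial in $\supp(T')$ is of the form $\nu\,\bmu^\bk$ with $\nu \in \lsupp\bmu$ and $\bmu^\bk \in \supp T$. Now suppose $\mag T = \g_0 = \bmu^{\bk_0} \ne 1$. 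Examining the dominant term: the coefficient of $x^{-1}\g_0$ coming from differentiating $a_{\bk_0}\g_0$ is $a_{\bk_0}\cdot(-\sum_i k_{0,i}b_i) = a_{\bk_0}\cdot c$ where $x^{-c}$ is the "$x$-exponent'' of $\g_0$; and the other contributions to $x^{-1}\g_0$ come either from $x^{-1}\cdot(\text{lower monomials of }T)$ — impossible, those give strictly smaller monomials — or from $\nu\cdot\bmu^\bk$ with $\nu \in \supp L'_i$, $\bmu^\bk \in \supp T$. I would argue that for $\mag T$ these latter cross-terms cannot conspire to cancel the leading contribution, by comparing magnitudes: $\nu \prec x^{-1}$ is false in general, so one must be more careful here. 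The clean route is instead: if $\g_0 \ne 1$ then $\g_0' = (\text{real})\cdot x^{-1}\g_0 + (\text{smaller})$ when $\g_0$ is a pure power $x^c$ of positive or negative degree, giving $\mag(T') = x^{c-1} \ne x^{-1}$ unless $c = 0$; and if $\g_0$ genuinely involves exponentials, Height Wins (Proposition~\ref{Eheightwins}) forces $\mag(\g_0') = \g_0 \cdot \mag(L')$ which again cannot equal $x^{-1}$ since $\g_0 \not\asymp 1$ and hence $\mag(T') \ne x^{-1}$. Either way $\mag(T') \ne x^{-1}$, contradiction; so $\mag T = 1$, i.e.\ $T = c + (\text{small})$ for a constant $c$.

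It then remains to rule out a nonzero small part: writing $T = c + S$ with $S$ small and nonzero, $T' = S'$, and I must show $\mag(S') \ne x^{-1}$. Since $S$ is small, $\mag S \prec 1$, and by Proposition~\ref{Elogfreepower} (Height Wins at height $0$) there is $c_1 < 0$ with $\mag S \prec x^{c_1} \prec x^{-1}$; differentiating only multiplies a monomial by something in $\lsupp\bmu$, and — here is the crucial point — differentiation cannot increase magnitude past $x^{-1}$ when starting below it: more precisely, from the displayed formula $\mag(S') \preccurlyeq x^{-1}\cdot \mag S \prec x^{-1}$ once we check the $L'_i$ terms are no larger, which follows because $\mag(L'_i) \preccurlyeq x^{-1}$ by induction. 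Hence $\mag(S') \prec x^{-1}$, so $S' \ne x^{-1}$. I expect the main obstacle to be exactly this magnitude-monotonicity bookkeeping: showing cleanly that $\mag(\g')$ is never equal to $x^{-1}$ for any single monomial $\g \ne x^{c}$ with $c \ne 1$, and then that these individual facts assemble (via the point-finiteness of $(\supp \g')$ from Proposition~\ref{Ederivexist}) into a statement about $\mag(T')$ without unexpected cancellation at the top. The induction on height is what makes the $L'_i$ comparisons go through.
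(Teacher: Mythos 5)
Your proposal gets the right ingredients — induction on height, Height Wins (Proposition~\ref{Eheightwins}) — but it contains a genuine gap in the analysis of the small part, and it makes the assembly step harder than it needs to be.

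The gap: you claim $\mag(S') \fsteq x^{-1}\cdot\mag S$, justified by ``$\mag(L'_i) \fsteq x^{-1}$ by induction.'' Both claims are false. Take $S = xe^{-x}$: then $S' = (x^{-1}-1)xe^{-x}$, so $\mag(S') = xe^{-x}$, whereas $x^{-1}\cdot\mag S = e^{-x} \fst xe^{-x}$; your inequality goes the wrong way. And a ratio $\mu_i = e^{-x}$ has $L_i = x$, $L'_i = 1 \fgt x^{-1}$; a ratio $\mu_i = e^{-e^x}$ has $L_i = e^x$, $L'_i = e^x \fgt x^{-1}$. There is no induction that delivers $\mag(L'_i) \fsteq x^{-1}$, because it simply isn't true. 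The fix is precisely what Height Wins gives you and what you gestured at in the non-power case: for a monomial $\g = x^b e^L$ with $L \ne 0$ of exact height $N-1$, one has $\g' = (bx^{-1}+L')\g$, and although $bx^{-1}+L'$ may be huge, it still has height $\le N-1 < N$; so \emph{every} term of $(bx^{-1}+L')\g$ is $\fgt x^{-1}$ if $L>0$ and $\fst x^{-1}$ if $L<0$, regardless of how big $L'$ is. In particular $x^{-1}$ never appears in $\supp\g'$.

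The second point is about where you put the effort. You work with $\mag T'$ and therefore must rule out cancellation at the top — which you explicitly flag as the ``main obstacle.'' But there is no need for that. Differentiation of a transseries is termwise, so $\supp T' \subseteq \bigcup_{\g \in \supp T}\supp\g'$. Once you know, for every individual monomial $\g \in \G_\bullet$, that $x^{-1} \not\in \supp\g'$ (which the height-0 case $\g = x^b \Rightarrow \g' = bx^{b-1}$ and the Height Wins argument above give you, by induction on height), the union immediately shows $x^{-1} \not\in \supp T'$, hence $T' \ne x^{-1}$, with no magnitude bookkeeping, no cancellation worries, and no case split between $\mag T = 1$ and $\mag T \ne 1$. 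This is exactly what the paper does, and it is shorter than your route.
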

\begin{proof}
In fact, we show:  If $\g \in \G_{\bullet}$, then
$x^{-1} \not\in \supp \g'$.  This suffices since
\begin{equation*}
	\supp T' \subseteq \bigcup_{\g \in \supp T} \supp \g' .
\end{equation*}
Proof by induction on the height.  If $\g = x^b$ has height $0$,
then $\g' = b x^{b-1}$ and $x^{-1} \not\in \supp \g'$.
If $\g = x^b e^L$ has exact height $N$, so
$L$ is purely large of exact height $N-1$, then
$\g' = (bx^{-1}+L') x^be^L$.  Now by the induction hypothesis,
$bx^{-1}+L' \ne 0$, so (by Proposition~\ref{Eheightwins})
every term of $\g'$ is far larger than $x^{-1}$ if $L>0$ and
far smaller than $x^{-1}$ if $L<0$.  So $x^{-1} \not\in \supp\g'$.
\end{proof}

\begin{pr}\label{derivprops}
{\rm (a)} Let $\m \ne 1$ be a log-free monomial with exact
height $n$.  Then $\m'$ also has exact height $n$.
{\rm (b)} If $\m \fgt \n$, and $\m \ne 1$, then
$\m' \fgt \n'$.
{\rm (c)}~If $\mag T \ne 1$, then $T' \cto (\mag T)'$ and
$T' \ato (\dom T)'$.
{\rm (d)}~If $\mag T \ne 1$ and $T \fgt S$, then $T' \fgt S'$.
\end{pr}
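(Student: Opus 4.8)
The plan is to establish (a)--(d) by a single induction on height, carrying along the auxiliary fact: \emph{if $L$ is purely large and nonzero, then $\mag L' \fgt x^{-1}$.} Two observations are used throughout. First, the recursive formula $\bigl(x^b e^L\bigr)' = \bigl(bx^{-1}+L'\bigr)\,x^b e^L$. Second, this formula involves no cancellation: the monomials $\g\cdot x^b e^L$, for $\g\in\supp(bx^{-1}+L')$, are pairwise distinct, so $\supp\bigl((x^be^L)'\bigr)$ is in bijection with $\supp(bx^{-1}+L')$; in particular $(x^b e^L)'\ne0$ exactly when $bx^{-1}+L'\ne0$, and then $\mag\bigl((x^be^L)'\bigr)=\mag(bx^{-1}+L')\cdot x^b e^L$. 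The auxiliary fact I would prove by induction on the height of $L$: the base case $L$ of height $0$ is explicit ($\mag L=x^{b_1}$ with $b_1>0$, so $\mag L'=x^{b_1-1}\fgt x^{-1}$); in the inductive step Proposition~\ref{nox^-1} rules out a cancellation against $x^{-1}$ inside $L'$, while Height Wins (Proposition~\ref{Eheightwins}) shows that the leading exponential of $L'$ already dominates every power of $x$, so $\mag L'\fgt x^{-1}$.

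For (a), write $\m=x^b e^L$ of exact height $n$. If $n=0$ then $\m=x^b$ with $b\ne0$ and $\m'=bx^{b-1}$ has exact height $0$. If $n\ge1$, then $L$ is purely large with exact height $n-1$, i.e.\ $L\in\T_{n-1}\setminus\T_{n-2}$; by the no-cancellation observation $\m'\ne0$ (via Proposition~\ref{nox^-1} if $b\ne0$, and via the auxiliary fact, which forces $L'\ne0$, if $b=0$). Each monomial of $\m'$ has the form $\g\m$ with $\g=x^{b'}e^{L''}$ of height $\le n-1$, hence $\g\m=x^{b+b'}e^{L+L''}$; since $L\notin\T_{n-2}$ but $L''\in\T_{n-2}$, the exponent $L+L''$ lies in $\T_{n-1}\setminus\T_{n-2}$, so $\g\m$ has exact height $n$. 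Therefore $\m'$ has exact height $n$.

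For (b), let $\m=x^b e^L\fgt\n=x^c e^{L_1}$ with $\m\ne1$; if $\n=1$ then $\n'=0$ while $\m'\ne0$ by (a), so assume $\n\ne1$, and compare $\m,\n$ lexicographically. If $L=L_1$: when $L=0$ we have $\m=x^b$, $\n=x^c$ with $b>c$ and $b,c\ne0$, so $\mag(\m')=x^{b-1}\fgt x^{c-1}=\mag(\n')$; when $L\ne0$ the auxiliary fact gives $\mag L'\fgt x^{-1}$, hence $\mag(bx^{-1}+L')=\mag L'=\mag(cx^{-1}+L')$, so $\mag(\m')/\mag(\n')=x^{b-c}\fgt1$. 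If $L>L_1$: then $\m/\n=x^{b-c}e^{L-L_1}$ is a large monomial whose exponent $L-L_1$ is purely large and positive, and the point is that the logarithmic-derivative corrections $bx^{-1}+L'$ and $cx^{-1}+L_1'$ have magnitudes whose exponent-parts are negligible against $L-L_1$ in the sense controlled by Height Wins; this yields $\mag(bx^{-1}+L')\cdot\m\fgt\mag(cx^{-1}+L_1')\cdot\n$, i.e.\ $\mag(\m')\fgt\mag(\n')$. This last case is the one needing the most care.

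Parts (c) and (d) follow quickly. For (c), given $T\ne0$ with $\mag T=\m_0\ne1$, write $T=c_0\m_0+R$ where every monomial of $R$ is strictly $\fst\m_0$; by Proposition~\ref{Ederivexist} both $\m_0'$ and $R'$ exist and $T'=c_0\m_0'+R'$, and by (b) every monomial in $\supp R'$ is strictly $\fst\mag(\m_0')$, so $\mag(T')=\mag(\m_0')=\mag\bigl((\mag T)'\bigr)$ with leading coefficient supplied by $c_0\m_0'$ alone; hence $\dom(T')=\dom\bigl((\dom T)'\bigr)$, i.e.\ $T'\cto(\mag T)'$ and $T'\ato(\dom T)'$. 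For (d), suppose $\mag T\ne1$ and $T\fgt S$ with $S\ne0$: if $\mag S\ne1$, apply (c) to $T$ and to $S$ and then (b) to $\mag T\fgt\mag S$; if $\mag S=1$, write $S=c+S_0$ with $S_0$ small, so $S'=S_0'$, and either $S_0=0$ (then $S'=0$ while $T'\ne0$ by (c)) or $S_0\ne0$ (apply (c) to $T$ and to $S_0$, and (b) to $\mag T\fgt\mag S_0$). The step I expect to be the main obstacle is the case $L>L_1$ of (b): making precise, via Height Wins, that the logarithmic-derivative correction factors can never be small enough to overcome the exponential gap between $\m$ and $\n$.
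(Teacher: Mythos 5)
Your overall plan -- a simultaneous induction on height, an auxiliary fact $\mag L' \fgt x^{-1}$ for nonzero purely large $L$, and deriving (c),(d) from (b) via the no-cancellation observation -- is essentially the paper's structure, and parts (a), (c), (d) are handled correctly. The difficulty you flag at the end is real, but your sketch of how to resolve it is not correct, and this is the genuine gap.

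In the case $L > L_1$ (the paper's ``$A>B$''), you claim that the correction factors $bx^{-1}+L'$ and $cx^{-1}+L_1'$ have ``magnitudes whose exponent-parts are negligible against $L-L_1$ in the sense controlled by Height Wins.'' That statement is false as it stands. Take $\m = e^{e^x+x}$ and $\n = e^{e^x}$: here $L-L_1 = x$ has height~$0$, while $\mag L' = \mag L_1' = e^{x}$, whose exponent-part $x$ is \emph{comparable} to $L-L_1$, not negligible -- and Height Wins simply does not apply, since $x$ and $x$ have the same height. What actually saves the argument is not negligibility of the individual correction factors but \emph{cancellation}: when $L-L_1$ has height $k$ strictly less than the maximum, the parts of $L$ and $L_1$ of height $>k$ must coincide, so one can write $L = A_0+A_1$, $L_1 = B_0+A_1$ with $A_0,B_0$ of height $k$ and $A_1$ the shared high-height tail. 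Then $\mag(bx^{-1}+L')$ and $\mag(cx^{-1}+L_1')$ both equal $\mag(A_1')$ and the ratio of correction factors collapses to a height-$\le k$ monomial, after which Height Wins applied to $e^{L-L_1}$ versus that ratio finishes the job. That decomposition is the content of the paper's ``Case 2,'' and it is the idea your proposal is missing; the paper's ``Case 1'' (where $L-L_1$ already has maximal height $n-1$) is the case your heuristic actually covers.

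A secondary concern: as stated, the proof of your auxiliary fact in the inductive step needs to know that $\mag L' = \mag\bigl((\mag L)'\bigr)$ (no cancellation at the top), which is part (c) for lower heights, which in turn invokes (b) for lower heights, which invokes the auxiliary fact. This is resolvable by folding the auxiliary fact into the single height induction alongside (a)--(d), but you should say so explicitly, otherwise the argument reads as circular.
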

\begin{proof} (a) For height $0$, $\m = x^b, b\ne0$ so
$\m' = bx^{b-1} \ne 0$ also has height $0$.
Let $\m = x^be^L \ne 1$ have exact height
$n$, so that $L\ne 0$ has exact height $n-1$.  Of course $L'$
has height at most $n-1$, and $(bx^{-1}+L')$ is not zero
by Proposition~\ref{nox^-1},
so $\m' = (bx^{-1}+L')x^be^L$ again has exact
height $n$.

If (b) holds for all $\m, \n$ of a given height $n$, then
(c) and (d) follow for $S, T$ of height $n$.
So it remains to prove (b).

First suppose $\m,\n$ have different heights.  Say
$\m \in \G_m, \n \in \G_n\setminus\G_m, n>m$.
If $\n \fgt 1$, then $\n' \fgt 1$ (because its height $n>0$),
$\n' \in \G_n\setminus\G_m$, $\m \fst \n$
by Proposition~\ref{Eheightwins}, $\m' \fst \n'$. 
If $\n \fst 1$, then $\n' \fst 1$,
$\n' \in \G_n\setminus\G_m$, $\m \fgt \n$, $\m' \fgt \n'$.
So (b) holds in both cases.

So suppose $\m,\n$ have the same height $n$.
Write $\m = x^a e^A, \n = x^b e^B$, where $a,b$ real and
$A,B$ purely large.  Assume $\m \fgt \n$, so either
$A>B$ or $A=B, a>b$.  We take the case $A>B$ (the other one
is similar to Case 2, below).  Say $A-B$ has
exact height $k$.  There will be two cases: $k = n-1$ and $k < n-1$.

\textit{Case 1.\/}  $k=n-1$.  Then $A-B$ has exact height $n-1$ and
\begin{equation*}
	\frac{bx^{-1}+B'}{a x^{-1} +A'}
\end{equation*}
has height $n-1$ (and its denominator is not zero
by Proposition~\ref{nox^-1}).
Therefore by Proposition~\ref{Eheightwins},
\begin{equation*}
	x^{a-b}e^{A-B}\fgt\frac{bx^{-1}+B'}{a x^{-1} +A'},
\end{equation*}
and thus
$(a x^{-1} +A')x^a e^A \fgt (bx^{-1}+B')x^b e^B$.
That is, $\m' \fgt \n'$.

\textit{Case 2.\/} $k<n-1$.  Write
$A = A_0+A_1, B = B_0+A_1$ where purely large
$A_0, B_0$ have height $k$
(and purely large $A_1 \ne 0$ has exact height $n-1 > k$).
Now $A_1'$ has height
$n-1 > k$ and is large, so 
$A_1' \fgt ax^{-1}+A_0'$ and $A_1' \fgt bx^{-1}+B_0'$.  Since
$x^{a-b}e^{A_0-B_0} \fgt 1$, we have
$x^a e^{A_0} \fgt x^b e^{B_0}$ and therefore
\begin{align*}
	\m' &= 
	(ax^{-1}+A_0'+A_1')x^a e^{A_0+A_1}
	\ato
	A_1' x^a e^{A_0+A_1}
	\\ & \qquad\fgt
	A_1' x^b e^{B_0+A_1}
	\ato
	(bx^{-1}+B_0'+A_1') x^b e^{B_0+A_1}
	= \n' .
\end{align*}
(See \cite[Prop.~4.1]{DMM} for this proof.)
\end{proof}

\begin{pr}\label{EWKB}
Let $T \in \T_{\bullet}$.
{\rm (i)} If $T \fst 1$, then $T' \fst 1$.
{\rm (ii)}~If $T \fgt 1$ and $T>0$, then $T'>0$.
{\rm (iii)}~If $T \fgt 1$ and $T<0$, then $T'<0$.
{\rm (iv)}~If $T \fgt 1$, then $T^2 \fgt T'$.
{\rm (v)}~If $T \fgt 1$, then $(T')^2 \fgt T''$.
{\rm (vi)}~If $T \fgt 1$ then $xT' \fgt 1$.
\end{pr}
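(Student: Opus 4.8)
The plan is to reduce each of the six assertions to a statement about a single log-free monomial and then induct on exact height. For the reduction note that the hypotheses $T\fgt 1$ or $T\fst 1$ force $\mag T\ne 1$, so Proposition~\ref{derivprops}(c) applies: $\mag(T') = \mag\big((\mag T)'\big)$, and, writing $\dom T = a_0\,\mag T$ with $a_0\in\R$, the leading coefficient of $T'$ equals $a_0$ times that of $(\mag T)'$. Hence it is enough to understand $\m'$ for a monomial $\m = x^b e^L\ne 1$, where by the definition of $\partial$ one has $\m' = \m^\dagger\,\m$ with $\m^\dagger := b x^{-1}+L'$ (the logarithmic derivative of $\m$).

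So the whole proposition rests on the following \emph{Monomial Lemma}, proved by induction on the exact height of $\m\ne 1$: if $\m\fgt 1$ then $\m'>0$ and $x^{-1}\fst\mag(\m')\fst\m^2$, and moreover $\mag(\m')\fgt 1$ unless $\m=x^b$; if $\m\fst 1$ then $\mag(\m')\fst x^{-1}$. The base case $\m = x^b$ ($b\ne 0$) is the computation $\m' = b x^{b-1}$. For the inductive step, write $\m = x^b e^L$ of exact height $N\ge 1$; when $\m\fgt 1$ the exponent $L>0$ is purely large of exact height $N-1$. Writing $L$ with dominant term $a_{\g_0}\g_0$, Proposition~\ref{derivprops}(b) shows $\g_0'$ strictly dominates every other $\g'$ occurring in $L' = \sum a_\g\g'$, so $\dom(L') = a_{\g_0}\dom(\g_0')$; the induction hypothesis applied to $\g_0$ (which satisfies $\g_0\fgt 1$ and has smaller exact height) yields $\g_0'>0$ and $\mag(\g_0')\fgt x^{-1}$, hence $L'>0$ and $\mag(L') = \mag(\g_0')\fgt x^{-1}$. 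Thus $\m^\dagger = bx^{-1}+L'$ has the dominance of $L'$, so $\m^\dagger>0$ and $\mag(\m') = \mag(L')\,\m\fgt x^{-1}$; and since $\mag(L')$ has exact height $\le N-1$, ``Height Wins'' (Proposition~\ref{Eheightwins}, applied to $L$) gives $\m\fgt\mag(L')^{\pm 1}$, hence both $\mag(\m') = \mag(L')\,\m\fst\m^2$ and $\mag(\m')\fgt 1$. The case $\m\fst 1$ is the same argument run on $-L>0$, invoking Proposition~\ref{Eheightwins} once more to get $\mag(\m') = \mag(L')\,\m\fst x^{-1}$; the subcase $N-1=0$ (so $L\in\T_0$) is covered either by Proposition~\ref{Eheightwins} directly or by the lexicographic definition of $\fgt$ on $\G_1$.

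Granting the lemma, parts (i)--(iv) and (vi) follow immediately from the reduction: (i) is $\mag(T') = \mag(\m')\fst x^{-1}\fst 1$; (ii) and (iii) read off the sign of the leading coefficient of $T'$; (iv) is $\mag(T^2) = \m^2\fgt\mag(\m') = \mag(T')$; and (vi) is $\mag(xT') = x\cdot\mag(\m')\fgt x\cdot x^{-1} = 1$. Part (v) I would do last, using parts (i) and (iv): if $T'\fgt 1$, applying (iv) to $T'$ gives $(T')^2\fgt(T')' = T''$; otherwise $\mag(T') = \mag(\m')\fste 1$, which by the lemma forces $\m = x^b$ with $0<b\le 1$, so $\mag(T') = x^{b-1}$. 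For $b<1$, Proposition~\ref{derivprops}(c) gives $\mag((T')^2) = x^{2b-2}\fgt x^{b-2} = \mag(T'')$; for $b=1$ we have $\mag(T') = 1$, so $T' = c+R$ with $c\ne 0$ and $R$ small, and then $T'' = R'\fst 1$ by part (i) while $\mag((T')^2) = 1$.

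The step I expect to be the main obstacle is the Monomial Lemma: making the induction close requires careful tracking of exact heights so that ``Height Wins'' can legitimately be applied to $L$ and to $-L$, and the base-of-the-exponential case $L\in\T_0$ must be checked by hand. A second, smaller subtlety is in part (v): the inequality $(T')^2\fgt T''$ would fail if $\mag(T')$ could equal $x^{-1}$, so one really needs the \emph{strict} bound $\mag(\m')\fgt x^{-1}$ furnished by the lemma in order to rule that out.
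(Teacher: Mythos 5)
Your proof is correct, but it takes a noticeably different route from the paper's. The paper dispatches (i), (iv), (v) and (vi) with very short chains that lean on Proposition~\ref{derivprops}(d) (differentiation preserves $\fgt$ away from magnitude $1$) together with Proposition~\ref{Elogfreepower} ($T\fgt 1$ forces $T\fgt x^c$ for some $c>0$); for instance (iv) is the one-liner $T\fgt 1\Rightarrow 1/T\fst x\Rightarrow (1/T)'\fst x'=1\Rightarrow T'/T^2\fst 1$, and (v) simply feeds $T'\fgt x^{-1}$ back through the same chain with no case split on whether $T'\fgt 1$ (the chain only ever needs $T\fgt 1/x$, not $T\fgt 1$). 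Only (ii)--(iii) gets an induction on height in the paper, and even that one is lighter than yours: it tracks just the sign of $L'$, not its magnitude. Your Monomial Lemma essentially absorbs that induction and upgrades it to a full description of $\mag(\m')$, from which all six parts drop out; this is systematic and yields strictly more information (your sharp bounds $x^{-1}\fst\mag(\m')\fst\m^2$ imply (i), (iv), (vi) simultaneously), but it costs you the careful exact-height bookkeeping for ``Height Wins'' and the extra $b\le 1$ case analysis in (v), both of which the paper's argument sidesteps entirely. One small simplification you could make: your case split in (v) is unnecessary once you notice, as the paper does, that the (iv)-argument applied to $T'$ requires only $T'\fgt x^{-1}$, which your lemma already delivers.
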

\begin{proof}
(i) $T \fst 1 \quad\Longrightarrow\quad T \fst x
\quad\Longrightarrow\quad T' \fst 1$.

(ii)(iii) Assume $T \fgt 1$.  Let $\dom T = a x^b e^L$, so
$T$ has the same sign as $a$. Then $T' \sim a (bx^{-1}+L')x^b e^L$.
The proof is by induction on the height of $T$.  If $T$ has height $0$,
so that $L=0$ and $b>0$, then $T'\sim abx^{b-1}e^L$ has the same
sign as $a$.  Assume $T$ has height $N > 0$, so $L > 0$ and $L$
has height $N-1$, so (since $L$ is large)
the induction hypothesis tells us that $L' > 0$.
Also, $L \fgt x^c$ for some $c>0$ so $L' \fgt x^{c-1} \fgt x^{-1}$,
so $T' \sim abL'x^be^L$ has the same sign as $a$.

(iv)  $\displaystyle
T \fgt 1 \;\;\Longrightarrow\;\; T \fgt \frac{1}{x}
\;\;\Longrightarrow\;\; \frac{1}{T} \fst x
\;\;\Longrightarrow\;\; \left(\frac{1}{T}\right)' \fst 1
\;\;\Longrightarrow\;\; \frac{T'}{T^2} \fst 1$.

(v) $T \fgt x^c$ for some $c>0$, so $T' \fgt x^{c-1} \fgt x^{-1}$,
then proceed as in (iv).

(vi) $T' \fgt 1/x$ as in the proof of (v).
\end{proof}

\begin{com}
After we define real powers (Definition~\ref{realpowers}),
we will be able to formulate
a variant of (iv):
If $T\fgte 1$, then $|T|^{1+\epsilon} \fgt T'$
for all real $\epsilon>0$;
if $T \fst 1$, then $|T|^{1-\epsilon} \fgt T'$
for all real $\epsilon>0$
\cite[Prop.~4.1]{DMM}.
And if $T > x^a$ for all real $a$, then
$T^{1-\epsilon} \fst T' \fst T^{1+\epsilon}$
for all real $\epsilon>0$
\cite[Cor.~4.4]{DMM}.
The proofs are essentially as given above for (iv).
\end{com}

\begin{pr}
{\rm (a)} If $L \ne 0$ is large and $b \in \R$, then
$\dom\big((x^be^L)'\big) = x^be^L\dom(L')$.
{\rm (b)}~If $\g \in \G_{\bullet}$, $\g \ne 1$, then $\g' \ne 0$.
\end{pr}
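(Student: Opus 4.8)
The plan is to reduce both parts to the derivative formula $(x^be^L)'=(bx^{-1}+L')\,x^be^L$ from the definition of $\partial$ and to the technical facts about derivatives already proved. Part~(b) is essentially a corollary of Proposition~\ref{derivprops}(a): if $\g\in\G_{\bullet}$ with $\g\ne1$ has exact height $n$, then $\g'$ has exact height $n$ as well, and in particular $\g'\ne0$. So it remains to prove~(a).

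For~(a), recall that in a log-free transmonomial $x^be^L$ the exponent $L$ is purely large, so $L\ne0$ forces $\mag L\fgt1$, hence $L\fgt1$. First I would apply Proposition~\ref{Elogfreepower} to get a real $c>0$ with $L\fgt x^c$. Since $\mag L\ne1$, Proposition~\ref{derivprops}(d) then yields $L'\fgt(x^c)'=cx^{c-1}$, so $\mag L'\fgt x^{c-1}$; as $c-1>-1$ this gives $L'\fgt x^{-1}$, and in particular $L'\ne0$, so that $\dom L'$ is defined. The crucial point is that $L'$ strictly dominates $x^{-1}$, hence also dominates the term $bx^{-1}$, so that $\dom(bx^{-1}+L')=\dom L'$. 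Since multiplication of a transseries by a fixed monomial $\m$ is an order-preserving translation of its support, we have $\dom(A\m)=(\dom A)\,\m$ for any $A$; taking $A=bx^{-1}+L'$ and $\m=x^be^L$ gives
\begin{equation*}
	\dom\big((x^be^L)'\big)=\dom(bx^{-1}+L')\cdot x^be^L=\dom(L')\cdot x^be^L=x^be^L\,\dom(L').
\end{equation*}

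The main obstacle, such as it is, lies entirely in~(a): one must exclude the possibility that $\mag L'$ is comparable to, or smaller than, $x^{-1}$, since otherwise the term $bx^{-1}$ could alter the dominant term of the derivative. This is exactly what the combination of Proposition~\ref{Elogfreepower} and Proposition~\ref{derivprops}(d) rules out; the remainder is routine bookkeeping with dominances, and part~(b) is a one-line consequence of Proposition~\ref{derivprops}(a).
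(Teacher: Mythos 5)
Your proof is correct and follows essentially the same route as the paper: for (a) you establish $L' \fgt x^{-1}$ by finding $c>0$ with $L \fgt x^c$ (Proposition~\ref{Elogfreepower}) and then passing to derivatives (Proposition~\ref{derivprops}(d)), exactly as the paper does, so $bx^{-1}$ cannot disturb the dominant term of $bx^{-1}+L'$. For (b) the paper suggests ``induction on the height and (a)'' while you cite Proposition~\ref{derivprops}(a) directly; these are substantively the same argument (that proposition is itself proved by precisely such an induction, using Proposition~\ref{nox^-1} to see $bx^{-1}+L'\neq 0$), so your one-line invocation is a clean and valid shortcut.
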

\begin{proof}
(a) Since $L \fgt 1$, there is $c>0$ with $L \fgt x^c$,
so $L' \fgt x^{c-1} \fgt x^{-1}$.  So
$(x^be^L)' = x^be^L(b x^{-1} +L') \cto x^be^L L'$.
For (b), use induction on the height and (a).
\end{proof}

\begin{pr}\label{constderiv}
Let $T \in \T_{\bullet}$.  If $T'=0$, then $T$ is a constant.
\end{pr}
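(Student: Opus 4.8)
The plan is to reduce to showing that a nonzero transseries \emph{with no constant term} has nonzero derivative, and then extract that from the leading-term analysis already carried out for monomials. First I would apply the Canonical Additive Decomposition (Proposition~\ref{C_add}) to write $T = L + c + S$ with $L$ purely large, $c \in \R$, and $S$ small. Since $\partial$ is $\R$-linear and annihilates constants, $T' = L' + S' = U'$, where $U := L + S = T - c$. If $U = 0$ then $T = c \in \R$ and we are finished, so it suffices to prove that $U \ne 0$ implies $U' \ne 0$.

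The crucial remark is that $1 \notin \supp U$: every monomial occurring in $L$ is $\fgt 1$ and every monomial occurring in $S$ is $\fst 1$, so none of them is $1$, and in particular $\mag U \ne 1$. Hence Proposition~\ref{derivprops}(c) applies to $U$ and yields $U' \ato (\dom U)'$, i.e.\ $\dom U' = \dom\big((\dom U)'\big)$. Writing $\dom U = a\,\mag U$ with $a \in \R \setminus \{0\}$, $\R$-linearity gives $(\dom U)' = a\,(\mag U)'$; and since $\mag U \ne 1$, part~(b) of the Proposition just above this one (``$\g \ne 1$ implies $\g' \ne 0$'') shows $(\mag U)' \ne 0$, so $(\dom U)' \ne 0$. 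Therefore $\dom U' \ne 0$, so $U' \ne 0$, contradicting $U' = 0$. This forces $U = 0$, i.e.\ $T = c$, a constant.

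I do not anticipate a real obstacle: the substance of the statement is exactly the non-collapsing of leading terms under differentiation, which is precisely what Propositions~\ref{nox^-1} and \ref{derivprops}, together with the unnumbered Proposition on $\g' \ne 0$, were set up to deliver. The only point needing a moment's attention is the reduction step---ensuring that after subtracting the constant term we are in the regime $\mag U \ne 1$ where Proposition~\ref{derivprops}(c) is available---and that follows at once from the disjointness of the supports of the purely large and small parts of $T$. An equivalent route avoids Proposition~\ref{C_add} altogether by casing on $\mag T$: if $\mag T \ne 1$, apply Proposition~\ref{derivprops}(c) to $T$ directly; if $\mag T = 1$, then $T = T[1] + S$ with $S$ small, and either $S = 0$, so $T$ is constant, or $\mag S \fst 1$ and the same argument applied to $S$ gives $T' = S' \ne 0$.
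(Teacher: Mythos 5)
Your proof is correct and essentially the paper's own: both invoke the Canonical Additive Decomposition, Proposition~\ref{derivprops}(c), and the fact that $\g \ne 1$ implies $\g' \ne 0$. The only difference is cosmetic — the paper splits into the two cases $L \ne 0$ and $L = 0, S \ne 0$, whereas you fold them into the single observation that $U = L + S$ has $\mag U \ne 1$ whenever $U \ne 0$; your closing remark (casing on $\mag T$ directly) is again the same argument rearranged.
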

\begin{proof}
Assume $T'=0$.
Write $T = L + c + S$ as in \ref{C_add}.  If $L \ne 0$ then 
$T' \cto (\mag L)' \ne 0$
so $T' \ne 0$.
If $L=0$ and $S\ne 0$, then $T' \cto (\mag S)' \ne 0$ so $T' \ne 0$.
Therefore $T=c$.
\end{proof}

The set $\T_N$ is a \Def{differential field} with constants $\R$.
This means it follows the rules you already know for computations
involving derivatives.

\begin{pr}[Addendum Height]\label{addendumheight}
Let $\bmu \subseteq \Gsmall_N$, and let $T \in \T^{\ebmu}$.
{\rm (i)}~If $\tbmu$ is the smallness addendum
for $T$, then $\tbmu \subseteq \Gsmall_N$.
{\rm (ii)}~If $\tbmu$ is the inversion addendum
for $T$, then $\tbmu \subseteq \Gsmall_N$.
{\rm (iii)}~If $\tbmu$ is the heredity addendum
for $\bmu$, then $\tbmu \subseteq \Gsmall_N$.
{\rm (iv)}~If $\tbmu$ is the derivitive addendum
for $T$, then $\tbmu \subseteq \Gsmall_N$.
\end{pr}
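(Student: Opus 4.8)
The plan is to reduce parts (ii) and (iv) to parts (i) and (iii), prove (i) directly from the explicit construction in Proposition~\ref{addendumsmall}, and prove (iii) by induction on $N$. Throughout, the only structural fact doing any work is that each $\G_{N-1}$ sits inside $\G_N$ as an ordered subgroup, so that smallness (and being a product of generators) is inherited; in particular $\Gsmall_{N-1}\subseteq\Gsmall_N$ and $x^{-1}\in\Gsmall_0\subseteq\Gsmall_N$.

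First I would settle (i). Recall from the proof of Proposition~\ref{addendumsmall} that, for $T\in\T^{\ebmu,\bm}$, the smallness addendum is the finite set $\SET{\bmu^\bk}{\bk\in\Min\BE}$, where $\BE=\SET{\bk\in\J_\bm}{\bmu^\bk\fst1}$. Each such $\bmu^\bk=\mu_1^{k_1}\cdots\mu_n^{k_n}$ is a product of elements of $\bmu$ with integer exponents; since $\bmu\subseteq\Gsmall_N\subseteq\G_N$ and $\G_N$ is a group, $\bmu^\bk\in\G_N$, and by the defining condition of $\BE$ it is small, so $\bmu^\bk\in\Gsmall_N$. That gives (i). For (ii), write $A=c\,\bmu^\bk\,(1+S)$ as in the Canonical Multiplicative Decomposition~\ref{C_mult}, with $S$ small; since $\supp S$ is contained in $\bmu^{-\bk}\cdot\supp A\subseteq\GRID^{\ebmu,\bm-\bk}$, we get $S\in\T^{\ebmu}$. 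By Proposition~\ref{addenduminverse} the inversion addendum of $A$ is exactly the smallness addendum of this $S$, so (ii) follows from (i) applied to $S$.

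Next, (iii) by induction on $N$. For $N=0$, every element of $\bmu\subseteq\Gsmall_0$ has the form $x^b$ with $b<0$, whose exponent is $0$, so $\bmu$ is already hereditary and its heredity addendum is empty. Let $N\ge1$. For each transmonomial $x^be^L\in\bmu$ the exponent satisfies $L\in\T_{N-1}=\R\lbb\G_{N-1}\rbb$, so by the definition of grid-based series over $\G_{N-1}$, $L$ is supported by a grid whose ratio set may be chosen inside $\Gsmall_{N-1}$. Taking the union of these ratio sets over the finitely many elements of $\bmu$ yields a finite $\bnu\subseteq\Gsmall_{N-1}\subseteq\Gsmall_N$ with each such $L$ lying in $\T^{\ebmu\cup\bnu}$. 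It remains to make $\bnu$ itself hereditary; applying the induction hypothesis to $\bnu\subseteq\Gsmall_{N-1}$, its heredity addendum lies in $\Gsmall_{N-1}\subseteq\Gsmall_N$. The union of $\bmu$, $\bnu$, and that addendum is hereditary, contains $\bmu$, and all of its pieces lie in $\Gsmall_N$; so the heredity addendum of $\bmu$ may be taken inside $\Gsmall_N$. Finally (iv) is immediate: the derivative addendum of $T$ consists of $\bmu$ together with the heredity addendum of $\bmu$ and the single monomial $x^{-1}$, and we have just seen all three lie in $\Gsmall_N$.

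The one step that is not pure bookkeeping is the point in (iii) where a generating ratio set for an exponent $L\in\T_{N-1}$ must be chosen \emph{inside} $\G_{N-1}$ rather than merely inside $\G_\bullet$; this is precisely the content of the definition $\T_{N-1}=\R\lbb\G_{N-1}\rbb$, together with the earlier remark that the union $\bigcup_{x^be^L\in\supp T}\supp L$ is a subgrid (here applied to the finitely many exponents coming from $\bmu$). Everything else is the observation that the heights of the exponents strictly decrease at each stage, so the recursion terminates in at most $N$ steps, with the newly added ratios drawn successively from $\Gsmall_{N-1},\Gsmall_{N-2},\dots$, all contained in $\Gsmall_N$.
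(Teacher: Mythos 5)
The paper states Proposition~\ref{addendumheight} without proof (it falls under ``some details are not checked here, especially the tedious ones''), so there is no paper argument to compare against; your proof is a correct filling of that gap. Your reductions of (ii) to (i) (via the canonical multiplicative decomposition and the observation that $\supp S\subseteq\bmu^{-\bk}\cdot\supp A$ keeps $S$ in $\T^{\ebmu}$) and of (iv) to (iii) plus $x^{-1}\in\Gsmall_0$ are exactly right, (i) follows correctly from the explicit $\tbmu=\bmu\cup\SET{\bmu^\bk}{\bk\in\Min\BE}$ in the proof of Proposition~\ref{addendumsmall} since each $\bmu^\bk$ lies in the group $\G_N$ and is small, and the induction on $N$ in (iii) implements precisely the recursion the paper sketches in the parenthetical remark where the heredity addendum is introduced, with the key point---that the generating ratios for an exponent $L\in\T_{N-1}=\R\lbb\G_{N-1}\rbb$ can be taken inside $\Gsmall_{N-1}$---correctly isolated.
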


\subsection*{Compositions}
The field of transseries has an operation of ``composition.''
The result $T \circ S$ is, however, defined in
general only for some $S$.  We will
start with the easy cases.

\begin{de}\label{realpowers}
We define $S^b$, where $S \in \T_{\bullet}$ is positive,
and $b \in \R$.  First, write $S = cx^ae^L(1+U)$ as
in \ref{C_mult}, with $c>0$.  Then define
$S^b = c^b x^{ab} e^{bL}(1+U)^b$.  Constant $c^b$, with $c>0$,
is computed in $\R$.  Next, $x^{ab}$ is a transmonomial,
but (if we are keeping track of generating ratios)
may require addendum of a ratio.  Also,
$(1+U)^b$ is a convergent binomial series, again we may require
the smallness addendum for $U$.  Finally, since $L$ is purely
large, so is $bL$, and thus $e^{bL}$ is a transmonomial, but
may require addendum of a ratio.
\end{de}

\begin{re}
Note $S^b$ is not of greater height than $S$: If
$S \in \T_N$, then $S^b \in \T_N$.  If $b \ne 0$,
then because $(S^b)^{1/b} = S$, in fact the exact
height of $S$ is the same as $S^b$.
\end{re}

\begin{com}
Monotonicity: If $b>0$ and $S_1 < S_2$, then
$S_1^b < S_2^b$.  If $b<0$ and $S_1 < S_2$, then
$S_1^b > S_2^b$.
\end{com}

\begin{de}
We define $e^S$, where $S \in \T_{\bullet}$.
Write $S = L + c + U$ as in \ref{C_add}, with $L$ purely large,
$c$ a constant, and $U$ small.  Then $e^S = e^L e^c e^U$.  Constant
$e^c$ is computed in $\R$.  [Note that $e^S>0$ since the
leading coefficient is $e^c$.]
Next, $e^U$ is a power series (with point-finite convergence); we may need
the smallness addendum for $U$.  And of course $e^L$ is a transmonomial,
but might not already be a generating ratio, so perhaps $e^L$ or $e^{-L}$ is
required as addendum.
\end{de}

\begin{re}
Of course, if $S=L$ is purely large, then this definition
of $e^S$ agrees with the formal notation $e^L$ used before.
Height increases by at most one:
If $S \in \T_N$, then $e^S \in \T_{N+1}$.
\end{re}

\begin{com}
Monotonicity:  If $S_1 < S_2$ then
$e^{S_1} < e^{S_2}$.
\end{com}

\begin{de}\label{expcomp}
Let $S, T \in \T_{\bullet}$ with $S$ positive and large (but not necessarily
purely large).  We want to define the \Def{composition} $T \circ S$.
This is done by induction on the height of $T$.
When $T = x^b e^L$ is a transmonomial, define
$T \circ S = S^b \, e^{L \circ S}$.
Both $S^b$ and $e^{L \circ S}$
may require addenda.  And $L \circ S$ exists by
the induction hypothesis.  In general, when
$T = \sum c_\g \g$, define $T \circ S = \sum c_\g (\g \circ S)$.
The next proposition is required.
\end{de}

\begin{re}
If $T \fgt 1$, then $T \circ S \fgt 1$.  If $T \fst 1$,
then $T \circ S \fst 1$.
Because of our use of the
symbol $x$, it will not be unexpected if we sometimes write
$T(S)$ for $T \circ S$.
Alternate term: ``large and positive'' = ``infinitely increasing''.
\end{re}

\begin{pr}\label{Ecompexist}
Let $\GRID^{\ebmu,\bm}$ be a log-free grid and let $S \in \T_{\bullet}$ be a large, positive, log-free transseries.
Then there exist
$\tbmu$ and $\widetilde{\bm}$ so that
$\g \circ S \in \T^{\tebmu,\tilde{\bm}}$
for all $\g \in \GRID^{\ebmu,\bm}$,
and the family
$\big(\supp(\g \circ S)\big)$ is point-finite.
\end{pr}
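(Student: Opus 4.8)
The plan is to argue by induction on the integer $N$ with $\GRID^{\ebmu,\bm}\subseteq\G_N$; since the base case $N=0$ is just the inductive step with all exponential data equal to $0$, I describe only the step. First I would replace $\bmu$ by a hereditary ratio set, which by Proposition~\ref{addendumheight} still lies in $\G_N$, and I would moreover arrange---by taking the generators adjoined for an exponent $L\in\T_{N-1}$ to lie in $\G_{N-1}$---that the exponent of every ratio is supported by the ratios of strictly smaller height. Write $\mu_i=x^{-b_i}e^{-L_i}$, so $L_i$ is purely large and $\supp L_i$ sits in a grid whose ratio set is in $\G_{N-1}$. The remark that makes the proof go is that composition is $\R$-linear and multiplicative on log-free monomials, whence
\begin{equation*}
	\bmu^\bk\circ S \;=\; \prod_{i=1}^n(\mu_i\circ S)^{k_i},
	\qquad
	\mu_i\circ S \;=\; S^{-b_i}\,e^{-(L_i\circ S)} .
\end{equation*}
Here $S^{-b_i}$ is provided by Definition~\ref{realpowers}. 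By the inductive hypothesis applied to the grid carrying $\supp L_i$, the family $(\g\circ S)_{\g\in\supp L_i}$ lies in one grid and is point-finite, so $L_i\circ S=\sum_{\g}(L_i)[\g]\,(\g\circ S)$ is a well-defined grid-based series; hence each $V_i:=\mu_i\circ S$ is a genuine grid-based series, and since $\mu_i\fst 1$ it is small.

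Next I would feed each $V_i$ through the canonical multiplicative decomposition (Proposition~\ref{C_mult}): $V_i=w_i\,\tau_i\,(1+U_i)$ with $w_i\in\R$, $w_i>0$, with $\tau_i=\mag V_i\fst 1$, and with $U_i$ small---all of this depending only on $\mu_i$ and $S$. Then
\begin{equation*}
	\bmu^\bk\circ S \;=\; \Bigl(\prod_i w_i^{k_i}\Bigr)\Bigl(\prod_i\tau_i^{k_i}\Bigr)\Bigl(\prod_i(1+U_i)^{k_i}\Bigr) .
\end{equation*}
The first factor is a positive real. As $\bk$ ranges over $\J_\bm$, the monomial $\prod_i\tau_i^{k_i}$ ranges over the grid $\GRID^{\{\tau_1,\dots,\tau_n\},\bm}$. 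Each $(1+U_i)^{k_i}$ is computed as a power series in the small series $U_i$ (convergent by Propositions~\ref{series} and~\ref{finiteseries}), whose support lies, regardless of the integer $k_i$, in the submonoid generated by $\supp U_i$; so for every $\bk$ the third factor is supported by the fixed submonoid $\WW_0$ generated by $\bigcup_i\supp U_i$, which by Proposition~\ref{addendumsmall} (adjoin the smallness addenda of the $U_i$) is itself a subgrid, say $\WW_0\subseteq\GRID^{\{\nu_1,\dots,\nu_p\},\0}$. Therefore each $\bmu^\bk\circ S$ lies in $\GRID^{\{\tau_1,\dots,\tau_n\},\bm}\cdot\WW_0$, a product of subgrids and hence a subgrid $\GRID^{\tbmu,\tilde\bm}$; taking $\tbmu$ to also absorb all addenda accumulated along the way (for the $S^{-b_i}$, the smallness addenda of the $U_i$, the heredity addendum, and the grid furnished by the inductive hypothesis for the $L_i\circ S$), we get $\g\circ S\in\T^{\tbmu,\tilde\bm}$ for every $\g\in\GRID^{\ebmu,\bm}$.

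For point-finiteness, fix a monomial $\g_0$ and suppose $\g_0\in\supp(\bmu^\bk\circ S)$. By the factorization above, $\g_0=\bigl(\prod_i\tau_i^{k_i}\bigr)\bigl(\prod_j\nu_j^{l_j}\bigr)$ for some integers $l_j\ge 0$, i.e.\ $\g_0=\boldsymbol{\sigma}^{\mathbf{q}}$ for the ratio set $\boldsymbol{\sigma}=(\tau_1,\dots,\tau_n,\nu_1,\dots,\nu_p)$ and the multi-index $\mathbf{q}=(k_1,\dots,k_n,l_1,\dots,l_p)$ lying in the fixed set $\J_{(\bm,\0)}$. By Proposition~\ref{finitetoone} only finitely many such multi-indices can equal $\g_0$, so only finitely many $\bk$ contribute any monomial to $\g_0$; that is precisely point-finiteness of the family $\bigl(\supp(\bmu^\bk\circ S)\bigr)$.

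I expect the main obstacle to be the bookkeeping that nails everything down inside one fixed grid rather than a drifting family: checking that $\WW_0$---the monoid generated by the supports of the small parts $U_i$---really is a subgrid (it is, by Proposition~\ref{addendumsmall}, and this is exactly where grid-basedness earns its keep), and that the assorted addenda---heredity, smallness, real-power, and those inherited from the inductive hypothesis for $L_i\circ S$---fuse into a single finite $\tbmu$ with one coherent offset. A second delicate point, logically prior to the rest, is the claim that $\supp L_i$ can be arranged to lie in a grid with ratio set in $\G_{N-1}$, so that the inductive hypothesis really applies; this is the part of the hereditary normalization to be done with care, and the naive analogue would fail (the statement corresponding to Proposition~\ref{addendumsmall} is false for purely large series). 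The multiplicativity identity $\bmu^\bk\circ S=\prod_i(\mu_i\circ S)^{k_i}$ is what reduces the whole question to controlling the $n$ series $\mu_i\circ S$; without it one would be staring at an unmanageable infinite sum over the grid.
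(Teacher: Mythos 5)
Your proof is correct, and it reaches the same destination as the paper's, but by a longer road.  The paper's proof first normalizes $\bmu$ to be hereditary in a ``triangular'' way (so that $L_i\in\T^{\{\mu_1,\dots,\mu_{i-1}\}}$) and then iterates through the $\mu_i$ \emph{in that order}, adjoining addenda as it goes; once all $\mu_i\circ S$ are arranged to be manifestly $\tbmu$-small, it invokes Proposition~\ref{finiteseries} directly on the multiplicativity identity $\bmu^\bk\circ S=\prod_i(\mu_i\circ S)^{k_i}$ to obtain both the common grid and point-finiteness in a single step.  You, by contrast, run a genuine induction on the height $N$ (which forces you to verify the extra technical point that the exponent supports $\supp L_i$ can be carried by ratios of strictly smaller height---a point you correctly flag as delicate but do not fully nail down, though it does hold), and then you push each $V_i=\mu_i\circ S$ through the canonical multiplicative decomposition $V_i=w_i\tau_i(1+U_i)$, control the three factors separately, and finish with Proposition~\ref{finitetoone}.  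This second half is in effect a re-derivation of the special case of Proposition~\ref{finiteseries} that the paper uses off the shelf; it is correct but costs you a page of bookkeeping (checking that the monoid generated by $\bigcup_i\supp U_i$ is a subgrid, fusing all the addenda, tracking the auxiliary multi-index $(\bk,\boldsymbol{l})$).  The chief payoff of your route is that it produces the target grid $\GRID^{\tbmu,\tilde\bm}$ completely explicitly, whereas the paper leaves that grid implicit inside the statement of Proposition~\ref{finiteseries}; the chief cost is length and the need for the height-induction setup, which the paper sidesteps by inducting on the position in the ordered ratio list instead.
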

\begin{proof}
First, add the heredity addendum of $\bmu$.
Now for the ratios $\mu_1,\cdots,\mu_{m}$,
write $\mu_i = x^{-b_i} e^{-L_i}$, $1 \le i \le m$.
Arrange the list so that for all~$i$,
$L_i \in \T^{\{\mu_1,\cdots,\mu_{i-1}\}}$.
Then take the $\mu_i$ in order.
Each $S^{-b_i}$ may require an addendum.  
Each $L_i \circ S$ may require an addendum.
So all $\mu_i \circ S$
exist.  They are small.  Add smallness addenda for
these.  So finally we get $\tbmu$.

Now for each $\mu_i \in \bmu$, we have $\mu_i \circ S$
is $\tbmu$-small. So by Proposition~\ref{finiteseries}
we have $(\g \circ S)_{\g \in \GRID^{\ebmu,\bm}}
\overset{\tebmu}{\longrightarrow} 0$.
\end{proof}

\begin{com}
Note $\tbmu$ depends on $S$, not just on a ratio set
generating $S$.

For composition $T \circ S$, we need $S$ to be large.
Example: Let $T = \sum_{j=0}^\infty x^{-j}$, 
$S = x^{-1}$.  Then $S$ is small, not large.  And
$T \circ S = \sum_{j=0}^\infty x^j$ is not
a valid transseries.
\end{com}

\begin{re}
If $S \in \T_{N_1}$ and $T \in \T_{N_2}$,
then $T \circ S \in \T_{N_1+N_2}$.
\end{re}

The following is proved using the recursive definition.
Note that $\G_n \cup \Gsmall_N$ is a \Def{convex subset}
of $\G_\bullet$.

\begin{pr}\label{compo_nN}
Let $n,N \in \N$ with $n \le N$.  Assume
$T,B \in \T_\bullet$, $\supp T \subset \G_n \cup \Gsmall_N$,
$\supp B \subseteq \G_N$, $B \fst x$.
Then $\supp\big(T\circ(x+B)\big) \subset \G_n\cup\Gsmall_N$.
\end{pr}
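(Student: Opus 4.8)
The plan is to reduce to the case of a single transmonomial and then induct on exact height, following the recursive definition of composition. Since $B\fst x$ we have $\dom(x+B)=x$, so $S:=x+B$ is large, positive and log-free, and $T\circ S$ is defined by Definition~\ref{expcomp} and Proposition~\ref{Ecompexist}. As $T\circ S=\sum_{\g\in\supp T}T[\g]\,(\g\circ S)$ with the family $(\g\circ S)$ point-finite, we get $\supp(T\circ S)\subseteq\bigcup_{\g\in\supp T}\supp(\g\circ S)$, so it suffices to treat each transmonomial $\g\in\G_n\cup\Gsmall_N$. If $\g\in\Gsmall_N\setminus\G_n$ then $\g$, hence $\g\circ S$, is small, so it is enough to bound the height of $\g\circ S$ by $N$; if $\g\in\G_n$ it is enough to show $\supp(\g\circ S)\subseteq\G_{\operatorname{ht}\g}\cup\Gsmall_N$. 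Both cases reduce to the \emph{Main Claim}: for every transmonomial $\g$ of exact height $h\le N$, $\supp(\g\circ S)\subseteq\G_h\cup\Gsmall_N$.

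First I would record two elementary facts. (i)~For $j\le N$, if $\p\in\G_j$ and $\m\in\Gsmall_N$ then $\p\m\in\G_j\cup\Gsmall_N$; the proof is a height bookkeeping on the exponent of $\p\m$ using ``Height Wins'' (Proposition~\ref{Eheightwins}). Consequently $\G_h\cup\Gsmall_N$ is closed under products, so $\{U:\supp U\subseteq\G_h\cup\Gsmall_N\}$ is an $\R$-subalgebra closed under point-finite sums. (ii)~Since $B\fst x$, the series $B/x$ is small with $\supp(B/x)\subseteq\Gsmall_N$, because each monomial of $B$ is $\fst x$ and lies in $\G_N$.

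Now the Main Claim, by induction on $h$. For $h=0$, $\g=x^b$ and $\g\circ S=S^b=x^b(1+B/x)^b$; expanding the binomial series (convergent by Proposition~\ref{series}), every monomial of $(1+B/x)^b-1$ lies in $\Gsmall_N$ by (ii), so $\supp(S^b)\subseteq\{x^b\}\cup x^b\Gsmall_N\subseteq\G_0\cup\Gsmall_N$ by (i). For $h\ge1$, write $\g=x^b e^L$ with $L$ purely large of exact height $h-1$, so $\g\circ S=S^b\,e^{L\circ S}$. Every monomial of $L$ has exact height $\le h-1<h$, so the inductive hypothesis gives $\supp(\m\circ S)\subseteq\G_{h-1}\cup\Gsmall_N$ for $\m\in\supp L$, hence $\supp(L\circ S)\subseteq\G_{h-1}\cup\Gsmall_N$. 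Write $L\circ S=M+d+V$ as in Proposition~\ref{C_add}: then $\supp V\subseteq\Gsmall_N$ (small monomials of $L\circ S$), while $\supp M$ consists of the large monomials of $L\circ S$ and so lies in $\G_{h-1}$; hence $M\in\T_{h-1}$ and $e^M\in\G_h$. Since $e^V$ has support in $\{1\}\cup\Gsmall_N$ and $e^d\in\R$, fact (i) gives $\supp(e^{L\circ S})\subseteq\{e^M\}\cup e^M\Gsmall_N\subseteq\G_h\cup\Gsmall_N$. Finally $S^b$ has support in $\G_0\cup\Gsmall_N$, so the product $S^b e^{L\circ S}$ has support in $\G_h\cup\Gsmall_N$, completing the induction. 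Unwinding the reductions yields $\supp(T\circ S)\subseteq\G_n\cup\Gsmall_N$.

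The one genuinely delicate point is the inductive step: composing with $S$ does \emph{not} in general preserve ``height $\le h-1$'' of $L$, since $B$ may carry small monomials of height as large as $N$ and these get exponentiated inside $L\circ S$. What rescues the argument is that only the \emph{purely large part} $M$ of $L\circ S$ is fed into a fresh exponential $e^M$, and $M$ does retain height $\le h-1$ (its monomials are the large monomials of $L\circ S$, all in $\G_{h-1}$). The remaining ingredients are routine bookkeeping: the product-closure fact (i), the identification $M\in\T_{h-1}$ from $\supp M\subseteq\G_{h-1}$, and convergence of the auxiliary series (Propositions~\ref{series}, \ref{finiteseries}, \ref{Ecompexist}).
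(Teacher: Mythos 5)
Your overall strategy is sound and matches the paper's terse hint ("proved using the recursive definition", plus the observation that $\G_n\cup\Gsmall_N$ is convex): reduce to transmonomials, induct on exact height, and control the recursion $\g\circ S=S^b\,e^{L\circ S}$ by peeling off the purely large part $M$ of $L\circ S$. Your auxiliary fact (i) is correct, and the clean way to see it is exactly the convexity the paper flags: if $\p\m$ were not small and $\m$ had exact height $k>j$, then by Height~Wins $\m\fst\p^{-1}$, so $\p\m\fst 1$, a contradiction; hence $\m\in\G_j$ and $\p\m\in\G_j$.

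However, the step you label "routine bookkeeping"---"$M\in\T_{h-1}$ from $\supp M\subseteq\G_{h-1}$"---is not automatic and is, in fact, the one place where something genuinely needs to be proved. From $\supp M\subseteq\G_{h-1}$ you only know $M$ is a grid-based series over $\G_\bullet$ whose support happens to land in $\G_{h-1}$; you do \emph{not} immediately know $M\in\T_{h-1}=\R\lbb\G_{h-1}\rbb$, because the witnessing grid $\GRID^{\tebmu,\tilde\bm}\supseteq\supp M$ may have a ratio set $\tbmu$ with elements outside $\G_{h-1}$. (And you really do need $M\in\T_{h-1}$: by the paper's inductive definition, $e^M\in\G_h$ precisely when $M$ is purely large \emph{and in $\T_{h-1}$}; a priori $M\in\T_K$ only for some $K\ge h-1$, which would put $e^M$ in $\G_{K+1}$.) What rescues this is a lemma of the same flavor as Proposition~\ref{addendumsmall}(b): if $\AA$ is a subgrid of $\G_\bullet$ and $\AA\subseteq\G_K$, then $\AA$ is a subgrid with respect to some ratio set contained in $\Gsmall_K$. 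This is provable---consider $\BE=\{\bk\in\J_{\tilde\bm}:\tbmu^\bk\in\G_K\}$; by Dickson's Lemma (Proposition~\ref{magfin}) both $\Min\BE$ and $\Min\bigl((\Lambda\cap\N^m)\setminus\{\0\}\bigr)$ are finite, where $\Lambda=\{\bk:\tbmu^\bk\in\G_K\}$, and a descent on $|\bk|$ shows the monoid $\Lambda\cap\N^m$ is generated by the latter; the images of these finitely many multi-indices under $\bk\mapsto\tbmu^\bk$ land in $\Gsmall_K$ (using convexity again to see they are small) and furnish the required ratio set. So the lemma is true, and once it is in place your induction closes. But you should state and prove it rather than wave at it, since it is precisely the nontrivial content behind the paper's pointer to convexity of $\G_n\cup\Gsmall_N$.
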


\subsection*{Continuity of Composition}

\begin{pr}\label{Ecompcontin}
Let $S$ be large and positive.  Let $(T_i)$ be a family
of transseries with $T_i \to T$.  Then
$T_i \circ S \to T \circ S$.
\end{pr}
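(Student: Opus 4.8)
The plan is to reduce the statement to the case $T=0$ using that composition with a fixed $S$ is $\R$-linear, and then to run a ``double point-finiteness'' argument built on Proposition~\ref{Ecompexist}. First I would unpack $T_i\to T$: there are a ratio set $\bmu$ and a multi-index $\bm$ with $\supp T_i\subseteq\GRID^{\ebmu,\bm}$ for all $i$ and with $\big(\supp(T_i-T)\big)$ point-finite. Then also $\supp T\subseteq\GRID^{\ebmu,\bm}$ --- otherwise a monomial of $\supp T$ lying outside the grid would lie in $\supp(T_i-T)$ for \emph{every} $i$, contradicting point-finiteness --- hence $\supp(T_i-T)\subseteq\GRID^{\ebmu,\bm}$ as well. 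Writing $T_i=\sum_\g c_{i,\g}\g$ and $T=\sum_\g c_\g\g$ on this grid and using the defining formula $T\circ S=\sum_\g c_\g(\g\circ S)$, a comparison of coefficients (each relevant sum being finite by Proposition~\ref{Ecompexist}) shows that $U\mapsto U\circ S$ is $\R$-linear on $\T^{\ebmu,\bm}$; in particular $T_i\circ S-T\circ S=(T_i-T)\circ S$. So it suffices to prove: if $U_i\in\T^{\ebmu,\bm}$ and $(\supp U_i)$ is point-finite, then $U_i\circ S\to 0$.

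For that I would invoke Proposition~\ref{Ecompexist} (in the log-free case, and its with-logs analogue in general): there are $\tbmu,\widetilde\bm$, depending only on $\bmu,\bm,S$, with $\g\circ S\in\T^{\tebmu,\widetilde\bm}$ for every $\g\in\GRID^{\ebmu,\bm}$ and with $\big(\supp(\g\circ S)\big)_{\g\in\GRID^{\ebmu,\bm}}$ point-finite. Since $\supp(U_i\circ S)\subseteq\bigcup_{\g\in\supp U_i}\supp(\g\circ S)\subseteq\GRID^{\tebmu,\widetilde\bm}$, all the $U_i\circ S$ are supported by one fixed grid. Now fix a monomial $\mathfrak h$ in the target. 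The set $\SET{\g\in\GRID^{\ebmu,\bm}}{\mathfrak h\in\supp(\g\circ S)}$ is finite, say $\{\g_1,\dots,\g_r\}$; and if $\mathfrak h\in\supp(U_i\circ S)$ then $\mathfrak h\in\supp(\g\circ S)$ for some $\g\in\supp U_i$, forcing $\g=\g_k$ for some $k\le r$, i.e.\ $\g_k\in\supp U_i$. Point-finiteness of $(\supp U_i)$ makes each set $\SET{i}{\g_k\in\supp U_i}$ finite, so $\mathfrak h\in\supp(U_i\circ S)$ for only finitely many $i$. Hence $U_i\circ S\to 0$, and therefore $T_i\circ S\to T\circ S$.

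I expect the genuine work to be bookkeeping rather than ideas: verifying the additivity of $U\mapsto U\circ S$ (where one leans on Proposition~\ref{Emultcontin} and the fact that point-finite sums may be regrouped), and, if one wants the statement for non-log-free $S$ and $T$, pushing Proposition~\ref{Ecompexist} through the substitution $x\mapsto\log_m x$. The heart of the argument --- the two nested finiteness facts --- is entirely routine.
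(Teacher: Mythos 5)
Your proposal is correct and follows essentially the same route as the paper's proof: invoke Proposition~\ref{Ecompexist} to land all $\g\circ S$ in one grid $\GRID^{\tebmu,\widetilde\bm}$ with $(\supp(\g\circ S))$ point-finite, then run the two-step finiteness argument (finitely many $\g$ hitting a target monomial, finitely many $i$ with $\g\in\supp(T_i-T)$). The extra bookkeeping you supply---checking $\supp T\subseteq\GRID^{\ebmu,\bm}$ and spelling out linearity of $U\mapsto U\circ S$---is left tacit in the paper but does no harm.
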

\begin{proof}
Say $T_i \overset{\ebmu,\bm}{\longrightarrow} T$.
Let $\tbmu$ and $\widetilde{\bm}$ be
as in Proposition~\ref{Ecompexist} so that
$\g\circ S \in \T^{\tebmu,\tilde{\bm}}$ for all
$\g \in \GRID^{\ebmu,\bm}$, and
$\g\circ S \overset{\tebmu}{\longrightarrow} 0$.
Let $\m \in \G_{\bullet}$.  There are finitely many
$\g \in \GRID^{\ebmu,\bm}$ such that $\m \in \supp (\g \circ S)$.
For each such $\g$ there are finitely many $i$ such that
$\g \in \supp (T_i-T)$.  So if $i$ is outside this finite union
of finite sets, we have $\m \not\in \supp((T_i-T) \circ S)$.
\end{proof}

\begin{com}
Continuity in the other composand might not hold.  For
$j \in \N$, let $S_j = x^{-j}e^{x}$.
Then $(S_j) \to 0$.  But the family
$(\exp(S_j))$ is not supported by any grid,
so $\exp(S_j)$ cannot converge to anything.
\end{com}

Tedious calculation should show that the usual derivative formulas hold:
$(S^b)' = bS^{b-1} S'$, $(e^T)' = e^T T'$,
$(T\circ S)' = (T' \circ S)\cdot S'$, and so on.

\section*{3E\quad With Logarithms}
\addcontentsline{toc}{section}{3E\quad With Logarithms}%
Transseries with logs are obtained by formally
composing the log-free transseries with $\log$ on the right.

\begin{no}
If $m \in \N$, we write $\log_m$ to represent the $m$-fold
composition of the natural logarithm with itself;  $\log_0$ will
have no effect; sometimes we may write $\log_{m} = \exp_{-m}$,
especially when $m<0$.
\end{no}

\begin{de}
Let $M \in \N$.  A transseries with \Def{depth} $M$ is a formal
expression $Q = T \circ \log_M$, where $T \in \T_{\bullet}$.
\end{de}

We identify the set of transseries of depth $M$ as a subset of the
set of transseries
of depth $M+1$ by identifying $T \circ \log_M$
with $(T\circ\exp)\circ \log_{M+1}$.  Composition on
the right with $\exp$ is defined in Definition~\ref{expcomp}.
Using this idea, we define operations on transseries
from the operations in $\T_{\bullet}$.

\begin{de}
Let $Q_j = T_j \circ \log_M$,
where $T_j \in \T_{\bullet}$.  Define
$Q_1 + Q_2 = (T_1 + T_2)\circ \log_M$;
$Q_1 Q_2 = (T_1 T_2)\circ \log_M$;
$Q_1 > Q_2$ iff $T_1 > T_2$;
$Q_1 \fgt Q_2$ iff $T_1 \fgt T_2$;
$Q_j \to Q_0$ iff $T_j \to T_0$;
$\sum Q_j = (\sum T_j) \circ \log_M$;
$Q_1^b = (T_1^b)\circ \log_M$;
$\exp(Q_1) = (\exp(T_1))\circ \log_M$;
and so on.
\end{de}

\begin{de} \Def{Transseries.}  Always assumed grid-based.
{\allowdisplaybreaks
\begin{align*}
	\G_{NM} &=  \SET{\g \circ \log_M}{\g \in \G_N} ,
	\\
	\T_{NM} &=  \SET{T \circ \log_M}{T \in \T_N} = \R\lbb\G_{NM}\rbb,
	\\
	\G_{\bullet M} &=  \bigcup_{N \in \N} \G_{NM}
	= \SET{\g \circ \log_M}{\g \in \G_{\bullet}} ,
	\qquad \G_{N\bullet} = \bigcup_{M \in \N} \G_{NM} ,
	\\
	\T_{\bullet M} &=  \bigcup_{N \in \N} \T_{NM}
	= \SET{T \circ \log_M}{T \in \T_{\bullet}} =\R\lbb\G_{\bullet M}\rbb,
	\quad \T_{N\bullet} = \R\lbb\G_{N\bullet}\rbb ,
	\\
	\G_{\bullet\bullet} &=  \bigcup_{M \in \N} \G_{\bullet M} 
	= \bigcup_{N,M \in \N}\G_{NM},
	\\
	 \R\lbbb x \rbbb &=  \bigcup_{M \in \N} \T_{\bullet M} = 
	 \bigcup_{N,M \in \N}\T_{NM} =
	\R\lbb\G_{\bullet\bullet}\rbb = \T_{\bullet\bullet}.
\end{align*}
}%
When $M<0$ we also write $\T_{\bullet M}$.  So for example
$\T_{\bullet,-1} = \SET{T \circ \exp}{T \in \T_{\bullet}}$.
\end{de}

If $T = \sum c_\g \g$ we may write $T \circ \log_M$ as a series
\begin{equation*}
	\left(\sum c_\g \g\right) \circ \log_M = \sum c_\g (\g \circ \log_M) .
\end{equation*}
Simplifications along these lines may be carried out:
$\exp(\log x) = x$;
$e^{b \log x} = x^b$; etc.
As usual we sometimes think of $x$ as a variable and
sometimes as the identity function.
On monomials we can write
\begin{equation*}
	(x^b e^L)\circ \log = (\log x)^b e^{L\circ\log} ,
\end{equation*}
and continue recursively in the exponent

We say
$Q \in \R\lbbb x \rbbb$ has \Def{exact depth} $M$ iff $Q = T \circ \log_M$,
$T \in \T_{\bullet}$
and $T$ cannot be written in the form $T = T_1 \circ \exp$
for $T_1 \in \T_{\bullet}$.  This will also make sense for
negative $M$.

\subsection*{Terminology}

The group $\G = \G_{\bullet\bullet}$ is the group of \Def{transmonomials}.
The ordered field $\T = \T_{\bullet\bullet}=\R\lbbb x \rbbb$
is the field of
\Def{transseries}.  Van der Hoeven \cite{hoeven} calls $\T$
\Def{the transline}.

\begin{com}
Although $x^x$ is not an ``official'' transmonomial,
if we consider it to be
an abbreviation for $e^{x\log x}$, then it may be considered
to be a transmonomial according to our identifications:
\begin{equation*}
	x^x = e^{x\log x} = \left(e^{e^x x}\right)\circ \log .
\end{equation*}
So $x^x$ has height 2 and depth 1; that is, $x^x \in \G_{2,1}$.
\end{com}

\begin{com}
Just as we require finite exponential height, we also require
finite logarithmic depth.  So the following is \emph{not}
a grid-based transseries:
\begin{equation*}
	x + \log x + \log\log x + \log\log\log x + \cdots .
\end{equation*}
But see for example \cite{schmeling} or
\cite{edgarc} for a variant that allows this.
\end{com}

\begin{com}\label{e^L}
If $\g \in \G_{\bullet\bullet}$, then $\g = e^L$ for some purely large
$L \in \T_{\bullet\bullet}$.  Because of logarithms, there is no need
for an extra $x^b$ factor.
\end{com}

\begin{de}\label{de:log}
\Def{Logarithm}.  If $T \in \T_{\bullet}$, $T>0$, write
$T = a x^b e^L (1+S)$ as in \ref{C_mult}.  Define
$\log T = \log a + b\log x + L + \log(1+S)$.
Now $\log a$, $a>0$, is computed in $\R$.
And $\log(1+S)$ is computed as a Taylor series.
The term $b \log x$
gives this depth $1$; if $b=0$ then we remain log-free.

For general $Q \in \R\lbbb x \rbbb$: if $Q = T\circ \log_M$, then
$\log(Q) = \log(T) \circ \log_M$, which could have depth $M+1$.

Alternatively (from Comment~\ref{e^L}): for $Q \in \T_{\bullet\bullet}$, write
$Q = a e^L(1+S)$ and then $\log Q = L + \log a + \log(1+S)$.
\end{de}

\begin{com}
If $T \not\cto 1$, then $\log T \fgt 1$.
\end{com}

\begin{de}
\Def{Composition}.  Let $Q_1, Q_2 \in \R\lbbb x \rbbb$
with $Q_2$ large and positive.  Define $Q_1 \circ Q_2$
as follows:  Write $Q_1 = T_1 \circ \log_{M_1}$ and
$Q_2 = T_2 \circ \log_{M_2}$, with $T_1, T_2 \in \T_{\bullet}$.
Applying \ref{de:log} $M_1$ times, we can write
$\log_{M_1}(T_2) = S \circ \log_{M_1}$ with $S \in \T_{\bullet}$.
Then define:
\begin{equation*}
	Q_1 \circ Q_2 = T_1 \circ \log_{M_1} \circ
	T_2 \circ \log_{M_2} =
	T_1 \circ S \circ \log_{M_1+M_2} ,
\end{equation*}
and compute $T_1 \circ S$ as in \ref{expcomp}.
\end{de}

The set of large positive transseries from $\R\lbbb x \rbbb$ is closed
under composition.  In fact, it is a group \cite[p.~111]{hoeven}.

\begin{de}
Differentiation is done as expected from the usual rules.
\begin{equation*}
	\big(T \circ \log\big)' = (T' \circ \log) \cdot x^{-1}
	= \big( T' e^{-x}\big) \circ\log .
\end{equation*}
So $\partial$ maps $\T_{\bullet M}$ into itself.
\end{de}

\begin{com}
\dots but perhaps $\partial$ does not map $\T_{NM}$ into itself.
Example: $Q = (\log x)^2 = (x^2) \circ \log \in \T_{0,1}$,
and $Q' = 2(\log x)/x = (2xe^{-x})\circ \log \in \T_{1,1}$
but $Q' \not\in \T_{0,1}$.  See
Remark~\ref{derivlogm}.
\end{com}


We now have an antiderivative for $x^{-1}$.
\begin{equation*}
	\big(\log x\big)' = \big(x \circ \log\big)'
	= \big(1 \cdot e^{-x}\big)\circ \log
	= (x^{-1})\circ\exp\circ\log = x^{-1} .
\end{equation*}
We will see below (Proposition~\ref{Eintegral}) that,
in fact, every transseries has
an antiderivative.

Here are some simple properties of the derivative.

\begin{pr}\label{derivproperties}
Let $S,T \in \R\lbbb x \rbbb$, $\n,\m \in \G_{\bullet\bullet}$.
{\rm (a)}~If $\m \fgt \n$, and $\m \ne 1$, then
$\m' \fgt \n'$.
{\rm (b)}~If $\mag T \ne 1$, then $T' \cto (\mag T)'$ and
$T' \ato (\dom T)'$.
{\rm (c)}~If $\mag T \ne 1$ and $T \fgt S$, then $T' \fgt S'$.
{\rm (d)}~If $T \fst 1$, then $T' \fst 1$.
{\rm (e)}~If $T \fgt 1$ and $T>0$, then $T'>0$.
{\rm (f)}~If $T \fgt 1$ and $T<0$, then $T'<0$.
{\rm (g)}~If $T \fgt 1$, then $T^2 \fgt T'$.
{\rm (h)}~If $T \fgt \log x$, then $(T')^2 \fgt T''$.
{\rm (i)}~If $T \fgt \log x$, then $xT' \fgt 1$.
{\rm (j)}~If $T'=0$, then $T$ is a constant.
\end{pr}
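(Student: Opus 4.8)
The plan is to deduce every item from its log-free analogue (Propositions~\ref{derivprops}, \ref{EWKB}, \ref{constderiv}) by stripping off the outermost $\log_M$. Write each transseries occurring in a given item as $T=T_0\circ\log_M$ with $T_0\in\T_\bullet$, and, using the identification $T_0\circ\log_M=(T_0\circ\exp)\circ\log_{M+1}$, arrange that the finitely many transseries in that item share a common $M$. Composition on the right with $\log_M$ is a bijection of $\T_\bullet$ onto $\T_{\bullet M}$ that (by the definitions in Subsection~3E) preserves and reflects $>$, $\fgt$, $\cto$, $\ato$, largeness and smallness, and it commutes with $\mag$ and $\dom$. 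The chain rule from the definition of $\partial$ gives
\begin{equation*}
	(T_0\circ\log_M)' = (T_0'\circ\log_M)\cdot\lambda_M,\qquad
	\lambda_M := (\log_M x)' = \prod_{i=0}^{M-1}(\log_i x)^{-1},
\end{equation*}
where $\lambda_M$ is a single positive transmonomial with $\lambda_0=1$; multiplication by the fixed monomial $\lambda_M$ preserves $>$, $\fgt$, $\cto$, $\ato$ and satisfies $U\lambda_M\fste U$.

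First I would dispatch (a)--(g) and (j), all of which are formal transfers along this dictionary. For (a): with $\m=\m_0\circ\log_M$ and $\n=\n_0\circ\log_M$, the hypotheses give $\m_0\ne1$ and $\m_0\fgt\n_0$, so $\m_0'\fgt\n_0'$ by Proposition~\ref{derivprops}; applying $\circ\log_M$ and then multiplying by $\lambda_M$ preserves $\fgt$. The identical template gives (b), (c) from Proposition~\ref{derivprops} and (d)--(g) from Proposition~\ref{EWKB}, where for (g) one absorbs the surplus $\lambda_M$ via $U\lambda_M\fste U$. For (j): $T'=0$ forces $(T_0'\circ\log_M)\lambda_M=0$, hence $T_0'=0$ since $\R\lbbb x \rbbb$ is a field and $\lambda_M\ne0$, hence $T_0$ is a constant by Proposition~\ref{constderiv}, hence so is $T$.

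Next I would derive (h) and (i), where the only real novelty lies. If $T\fgt\log x$ then $\mag T\fgt\mag(\log x)=\log x$ and $\mag T\ne1$, so (a) gives $(\mag T)'\fgt(\log x)'=x^{-1}$ and (b) gives $T'\cto(\mag T)'$, whence $T'\fgt x^{-1}$. Now (i) follows on multiplying by the monomial $x$, and (h) follows from $T'\fgt x^{-1}$ by the same computation that takes (iv) to (v) in Proposition~\ref{EWKB} (namely $1/T'\fst x$, so $-T''/(T')^2=(1/T')'\fst1$, i.e.\ $(T')^2\fgt T''$; the step $U\fst x\Rightarrow U'\fst1$ is itself immediate from (a), (b), (d) applied to the canonical additive decomposition (Proposition~\ref{C_add}) of $U$).

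The bulk of the work is bookkeeping---checking that $\mag$, $\dom$ and the relations $>$, $\fgt$, $\cto$, $\ato$ behave correctly under $\circ\log_M$ and under multiplication by $\lambda_M$, and that the common-depth reduction is harmless. The one genuine difference from the log-free setting is the passage from the hypothesis ``$T\fgt1$'' to ``$T\fgt\log x$'' in (h) and (i): in $\T_\bullet$ one had $T\fgt1\Rightarrow T\fgt x^c\Rightarrow T'\fgt x^{c-1}\fgt x^{-1}$ for some $c>0$, but with logarithms $(\log_M x)'=\lambda_M\fst x^{-1}$ for $M\ge2$, so $T\fgt1$ alone no longer forces $T'\fgt x^{-1}$ (take $T=\log\log x$, with $T'=1/(x\log x)$ and $xT'\fst1$); the sharp hypothesis is precisely $T\fgt\log x$, since $(\log x)'=x^{-1}$.
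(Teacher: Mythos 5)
Your proof is correct and follows essentially the same route as the paper's: reduce to the log-free Propositions \ref{derivprops}, \ref{EWKB}, \ref{constderiv} via $T=T_0\circ\log_M$ and the chain rule, noting that the factor $\lambda_M=(\log_M x)'$ is a monomial with $\lambda_M\fste1$ so it preserves (or harmlessly shrinks in the case of $\fst$) the relevant relations, and observe that the hypothesis must be strengthened from $T\fgt1$ to $T\fgt\log x$ in (h) and (i) because $(\log_M x)'\fst x^{-1}$ once $M\ge2$. The only cosmetic difference is that you transfer (d) and (g) directly from Proposition \ref{EWKB}(i),(iv), whereas the paper re-runs the one-line arguments ($T\fst1\Rightarrow T\fst x\Rightarrow T'\fst1$, etc.) in the logged setting; both are sound, and you have usefully spelled out the lemma $U\fst x\Rightarrow U'\fst1$ via the additive decomposition, which the paper leaves implicit in its "proceed as in (g)."
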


\begin{proof}
(a)(b)(c) Starting with Proposition~\ref{derivprops}(b)(c)(d), compose
with $\log$ repeatedly.

(d) $T \fst 1 \quad\Longrightarrow\quad T \fst x
\quad\Longrightarrow\quad T' \fst 1$.

(e)(f) Starting with Proposition~\ref{EWKB}(ii)(iii),  compose
with $\log$ repeatedly.

(g) $\displaystyle
T \fgt 1 \;\;\Longrightarrow\;\; T \fgt \frac{1}{x}
\;\;\Longrightarrow\;\; \frac{1}{T} \fst x
\;\;\Longrightarrow\;\; \left(\frac{1}{T}\right)' \fst 1
\;\;\Longrightarrow\;\; \frac{T'}{T^2} \fst 1$.

(h) since $T \fgt \log x$, we have $T' \fgt x^{-1}$,
then proceed as in (g).

(i) $T \fgt \log x \quad\Longrightarrow\quad T' \fgt x^{-1}$.

(j) Starting with Proposition~\ref{constderiv},  compose
with $\log$ repeatedly.
\end{proof}

\begin{com}
Note $T=\log\log x$ is a counterexample to:
If $T \fgt 1$ then $xT' \fgt 1$. And to:
If $T \fgt 1$, then $(T')^2 \fgt T''$.
\end{com}

\begin{re}\label{derivlogm}
Of course $\G_{NM}$ is a group, so $\T_{NM}$ is a field.
The derivative of $\log_M$ is
\begin{equation*}
	\big(\log_M x\big)' = \left(\prod_{m=0}^{M-1} \log_m x\right)^{-1} .
\end{equation*}
If $N\ge M$ then this belongs to $\G_{NM}$, so in that case
$\T_{NM}$ is a differential field (with constants $\R$).
\end{re}

\begin{pr}
If $T \in \T_{N,M}$ has exact height $N \ge 1$ {\rm (}that is,
$T \not\in \T_{N-1,\bullet}${\rm )} and $T' \fgt 1$,
then $T' \in \T_{N,M}$ and also has exact height $N$.
\end{pr}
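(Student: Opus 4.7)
The plan is to work at depth $M$ using the unique log-free representative of $T$, and first to show that the hypothesis $T' \fgt 1$ forces $M \le N$. Write $T = \tilde T \circ \log_M$ with $\tilde T \in \T_N$; since $T$ has exact height $N$, so does $\tilde T$ (otherwise $T \in \T_{N-1,M} \subseteq \T_{N-1,\bullet}$). By Proposition~\ref{derivproperties}(b), $T' \cto (\mag T)'$, and the assumption $T' \fgt 1$ forces $\mag T \fgt x$: the alternatives ($\mag T = x^a$ with $a \le 1$, or $\mag T = x^b e^L$ with $L < 0$ purely large) all yield $(\mag T)' \fst 1$. On the other hand, iterated Height Wins (Proposition~\ref{Eheightwins}) gives $\tilde T \fst \exp_{N+1}(x)$, so $T = \tilde T \circ \log_M \fst \exp_{N+1}(\log_M x) = \log_{M-N-1}(x)$ whenever $M \ge N+1$, and in particular $T \fst x$. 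This is incompatible with $T \fgt x$, so $M \le N$.

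With $M \le N$ in hand, I compute the log-free representative of $T'$ at depth $M$. By the chain rule,
\begin{equation*}
T' = (\tilde T' \circ \log_M) \cdot (\log_M x)' = \tilde T'(\log_M x) \cdot \frac{1}{x \log x \cdots \log_{M-1} x}.
\end{equation*}
Setting $y = \log_M x$ rewrites the right factor as the log-free monomial $\phi(y) := \prod_{k=1}^M \exp_k(y)^{-1} = e^{-\sum_{k=0}^{M-1} \exp_k(y)}$, which has exact height $M$. Hence the log-free representative of $T'$ at depth $M$ is $f := \tilde T' \cdot \phi$, and since $\tilde T' \in \T_N$, $\phi \in \T_M$, and $M \le N$, we conclude $f \in \T_N$, so $T' \in \T_{N,M}$.

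For the exact-height claim, decompose $\tilde T = A + B$ with $A \in \T_{N-1}$ and $\supp B \subseteq \G_N \setminus \G_{N-1}$; since $\tilde T$ has exact height $N$, $B \ne 0$. Each $\g = x^b e^L \in \supp B$ has $L$ of exact height $N-1$, so $\supp \g' \subseteq \g \cdot \G_{N-1} \subseteq \G_N \setminus \G_{N-1}$, and Proposition~\ref{derivprops}(a) yields $\mag B' = \mag (\mag B)' \ne 0$. Thus $\tilde T' = A' + B'$ carries a nonzero exact-height-$N$ piece $B'$, disjoint in support from $A' \in \T_{N-1}$. Multiplying by $\phi = e^P$ with $P$ purely large of exact height $M-1$ preserves the exact-height-$N$ part of $B'$ when $M < N$, because then $L + P$ retains exact height $N-1$; the edge case $M = N$ is the main obstacle, where $L$ and $P$ share the same exact height and one must leverage $T' \fgt 1$ to exclude total top-order cancellation. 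Once $f \notin \T_{N-1}$ is established, $T' = f \circ \log_M \notin \T_{N-1,\bullet}$, and $T'$ has exact height $N$.
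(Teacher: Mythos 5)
Your proposal follows the paper's proof (same chain‑rule formula, same reduction to the log‑free representative at depth $M$), with a different route to $M \le N$ (the paper reads $\tilde T'(\log_M x) \fgt x$ directly off the formula, whereas you go through $\mag T \fgt x$ and Height Wins) and a more detailed argument for the exact‑height assertion via the decomposition $\tilde T = A + B$, which the paper merely asserts.

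However, you explicitly leave a gap at $M = N$, and this case genuinely occurs under the hypotheses: take $T = e^{(\log x)^2}$, so $N = M = 1$ and $T' = 2x^{-1}(\log x)\,e^{(\log x)^2} \fgt 1$. Here $\phi = e^{-P}$ with $P$ of exact height $N-1$, the same exact height as $L$ for each $\g = x^be^L \in \supp B'$, so the top of $L - P$ can cancel and your argument leaves open whether the height of $B'\phi$ drops---which is precisely where the remaining hypotheses must enter. (The paper's own write‑up also glosses over this: its claim that $x\log x\cdots\log_{M-1}x$ has height $\le N-1$ at depth $M$ tacitly presumes $M\le N-1$.) There is a second, unflagged gap at the very end: from $f \in \T_N \setminus \T_{N-1}$ alone you cannot conclude $T' = f \circ \log_M \notin \T_{N-1,\bullet}$, since composing a log‑free transseries of exact height $N$ with $\log_M$ can lower the exact height (for instance $e^{2y} \circ \log = x^2$); to close this you need to invoke a hypothesis you do not use at that point, such as $T \notin \T_{N-1,\bullet}$ or $T' \fgt 1$.
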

\begin{proof}
There is $T_1 \in \T_N$ with $T(x) = T_1(\log_M x)$.
Then
\begin{equation*}
	T'(x) = \frac{T_1'(\log_M x)}{x\log x \cdots \log_{M-1} x} ,
\end{equation*}
so $T_1'(\log_M x) \fgt x$ and $T_1(x) \fgt \exp_M x$.
So $N \ge M$ and $x\log x \cdots \log_{M-1} x \in \T_{N-1,M}$
so $T' \in \T_{N,M}$.  Since $T_1'$ has exact height $N$ and
$x\log x \cdots \log_{M-1} x$ has height $\le N-1$, it follows
that $T'$ has exact height $N$.
\end{proof}

\subsection*{Valuation}
The map ``$\mag$'' from $\R\lbb\G\rbb \setminus \{0\}$ to $\G$ is a
(nonarchimedean) \Def{valuation}.  This means:
\begin{enumerate}
\item[(i)] $\mag(ST) = \mag(S) \mag(T)$;
\item[(ii)] $\mag(S+T) \fste \max\{\mag(S), \mag(T)\}$
with equality if $\mag(S) \ne \mag(T)$.
\end{enumerate}
The ordered group $\G$ is the \Def{valuation group}.

\begin{com}
The valuation group is
written multiplicatively here, but in many parts
of mathematics it is more common
to write it additively, and with the order reversed.
In the transseries case, $\G = \G_{\bullet\bullet}$,
we could follow this ``additive'' convention by saying:
the \Def{valuation group} is the set of purely large transseries,
with operation $+$ and order $<$.  
The valuation $\nu$ is then related to the magnitude
by: $\mag T = e^{-L} \Longleftrightarrow \nu(T) = L$.  We could then
still call $\G$ the
\Def{monomial group}.  But for a general ordered abelian
monomial group $\G$ (without $\log$ and $\exp$)
the valuation group would have to consist
of ``formal logarithms'' of the monomials;
introducing them may seem artificial.
\end{com}

The map ``$\mag$'' is an \Def{ordered valuation}.
This means that it also satisfies:
\begin{enumerate}
\item[(iii)] if $\mag(T) \fgt 1$ then $|T| > 1$. [The absolute value
$|T|$ is defined as usual.]
\end{enumerate}
The map ``$\mag$'' is a \Def{differential valuation}.
This means that it also satisfies:
\begin{enumerate}
\item[(iv)] if $\mag(T) \ne 1, \mag(S) \ne 1$, then
$\mag(T) \fste \mag(S)$ if and only if $\mag(T') \fste \mag(S')$;
\item[(v)] if $\mag(T) \fst \mag(S) \ne 1$,
then $\mag(T') \fst \mag(S')$.
\end{enumerate}
For more on valuations, see \cite{kuhlmann} and \cite{rosenlichttrans}.

\section{Example Computations}\label{excomp}
I will show here some computations.  They can be
done by hand with patience, but modern computer algebra systems
will handle them easily.  Read these or---better yet---try
doing some computations of your own.  I think that
your own experience with it will convince you better
than anything else that this system is truly elementary,
but very powerful.

\subsection*{A Polynomial Equation}
\begin{problm}\label{fifth}
Solve the fifth-degree polynomial equation
\begin{equation*}
	P(Y) := Y^5+e^x Y^2-x Y - 9 = 0
\end{equation*}
for $Y$.
\end{problm}

We can think of
this problem in various ways.  If $x$ is a real number,
then we want to solve for a real number $Y$.
(When $x=0$, the Galois group is $S_5$, so we will not be
solving this by radicals!)  Or: think of $x$ and $e^x$
as functions, then the solution $Y$ is to be a function
as well.  Or: think of $x$ and $e^x$ as transseries,
then the solution $Y$ is to be a transseries.
From some points of view, this last one is the easiest
of the three.  That is what we will do now.

In fact, many polynomial equations with
transseries coefficients have transseries
solutions.  Of course for solutions in
$\R\lbbb x \rbbb$ there are certain restrictions,
since some polynomials (such as $Y^2+1$) might have no zeros
because they are always positive.  But if
$P(Y) \in \R\lbbb x \rbbb [Y]$ and there are
transseries $A, B$ with $P(A) > 0, P(B) < 0$,
there there is a transseries $T$ between
$A$ and $B$ with
$P(T) = 0$.  Van der Hoeven  \cite[Chap.~9]{hoeven} has
this is even for \emph{differential} polynomials.

The transseries coefficients of our equation
$Y^5+e^x Y^2-x Y - 9 = 0$
belong to the set $\T_1 = \T_{1,0}$ of height $1$ depth $0$ transseries.
Our solutions will also be in $\T_1$.
Note that $Y=0$ is not a solution.  So any solution $Y$ has
a dominance $\dom Y = a x^b e^L$, where $a \ne 0$ and $b$
are real, and $L \in \T_0$ is purely large.  So the dominances
for the terms are:
{\allowdisplaybreaks
\begin{align*}
	\dom (Y^5) &=  a^5 x^{5b} e^{5L} ,\\
	\dom (e^x Y^2) &=  a^2 x^{2b} e^{2L+x} ,\\
	\dom (-x Y) &=  -a x^{b+1}e^L ,\\
	\dom (-9) &=  -9 .
\end{align*}
}%
Now we can compare these four terms.
If $L>x/3$, then $Y^5$ is far larger than
any other term, so $P(Y) \cto Y^5$ and therefore $P(Y) \ne 0$.
If $-x/2 < L < x/3$, then
$P(Y) \cto e^x Y^2$.
If $L < -x/2$, then $P(T) \cto -9$
(all other terms are ${} \fst 1$).
So the only possibilities for $L$ are $x/3$ and $-x/2$.
[If you know the ``Newton polygon'' method, you may recognize
what we just did.]

We consider first $L=x/3$.  If $b>0$ then $P(Y) \cto Y^5$;
if $b<0$ then $P(Y) \cto e^x Y^2$; so $b=0$.  Then
we must have $\dom(Y^5) + \dom(e^{-x} Y^2)= 0$, since otherwise
the sum $P(Y) \cto Y^5 + e^{-x} Y^2$.  So
$a^5+a^2 = 0$.  Since $a \ne 0$ and $a \in \R$,
we have $a=-1$.
[To consider also complex zeros of $P$, we would try
to use complex-valued transseries, and then the
other two cube roots of $-1$ would also need
to be considered here.]
Thus $Y = -e^{x/3}(1+S)$, where $S \fst 1$.  Then
\begin{align*}
	& P\big({-e^{x/3}} (1+S)\big) 
	\\ & =
	\big(-e^{x/3}(1+S)\big)^5 +
	e^x\big(-e^{x/3}(1+S)\big)^2 - x \big(-e^{x/3}(1+S)\big)
	- 9
	\\ & =
	-3e^{5x/3} S -9e^{5x/3}S^2 -10e^{5x/3}S^3
	\\ &\qquad -5e^{5x/3}S^4
	-e^{5x/3}S^5
	 +
	xe^{x/3}S+xe^{x/3}-9.
\end{align*}
Since $S$ is small, among the terms involving $S$ the dominant
one is $-3e^{5x/3}S$.  Solve $P\big({-e^{x/3}}(1+S)\big)=0$
for that term, and write the equation as $S = \Phi(S)$,
where
\begin{equation*}
	\Phi(S) := 
	-3S^2 -\frac{10}{3}S^3 -\frac{5}{3}S^4
	-\frac{1}{3}S^5+\frac{1}{3}xe^{-4x/3}S+\frac{1}{3}xe^{-4x/3}
	-3e^{-5x/3}.
\end{equation*}
Start with any $S_0$
and iterate $S_1 = \Phi(S_0)$, $S_2 = \Phi(S_1)$, etc.  For example,
{\allowdisplaybreaks
\begin{align*}
	S_0 &= 0
	\\
	S_1 &=  \frac{1}{3}xe^{-4x/3} - 3 e^{-5x/3}
	\\
	S_2 &=  \frac{1}{3}xe^{-4x/3} - 3 e^{-5x/3} -\frac{2}{9}x^2e^{-8x/3}
	+ 5 x e^{-3x} - 27 e^{-10x/3}
	- \frac{10}{81}x^3e^{-4x}
	\dots
	\\
	S_3 &=  \frac{1}{3}xe^{-4x/3} - 3 e^{-5x/3} -\frac{2}{9}x^2e^{-8x/3}
	+ 5 x e^{-3x} - 27 e^{-10x/3}
	+ \frac{20}{81}x^3e^{-4x}
	\dots
\end{align*}
}%
Each step produces more terms that subsequently remain unchanged.
Thus we get a solution for $P(Y)=0$ in the form
{\allowdisplaybreaks
\begin{align*}
	Y =& -e^{x/3}-\frac{1}{3}xe^{-x}+3e^{-4x/3}
	+\frac{2}{9}x^2e^{-7x/3}
	-5xe^{-8x/3}
	+27e^{-3x}
	\\&- \frac{20}{81}x^3e^{-11x/3}
	+9x^2e^{-4x}
	-105xe^{-13x/3}
	+396e^{-14x/3}
	+\frac{1}{3}x^4e^{-5x}
	\\&- \frac{455}{27}x^3e^{-16x/3}
	+308x^2e^{-17x/3}
	-2430xe^{-6x}
	-\frac{364}{729}x^5e^{-19x/3}
	\\&+ 7020e^{-19x/3}
	+\frac{2618}{81}x^4e^{-20x/3}
	-810x^3e^{-7x}
	+\o(e^{-7x}) .
\end{align*}
}%
The ``little o'' on the end represents, as usual, a remainder
that is ${}\fst e^{-7x}$.

Now consider the other possibility, $L=-x/2$.  Using the
same reasoning as before, we get $b=0$ and $a^2-9=0$,
so there are two possibilities $a = \pm 3$.  With the same
steps as before, we end up with two more solutions,
{\allowdisplaybreaks
\begin{align*}
	Y = {}& \pm3e^{-x/2}+\frac{1}{2}xe^{-x}
	\pm\frac{1}{24}x^2e^{-3x/2}
	\mp\frac{1}{3456}x^4e^{-5x/2}
	-\frac{81}{2}e^{-3x}
	\\& \pm\frac{1}{248832}x^6e^{-7x/2}
	\mp\frac{135}{4}xe^{-7x/2}
	-\frac{27}{2}x^2e^{-4x}
	\mp\frac{5}{71663616}x^8e^{-9x/2}
	\\& \mp\frac{105}{32}x^3e^{-9x/2}
	-\frac{1}{2}x^4e^{-5x}
	\pm\frac{7}{5159780352}x^{10}e^{-11x/2}
	\mp\frac{21}{512}x^5e^{-11x/2}
	\\& \pm\frac{19683}{8}e^{-11x/2}
	+3645xe^{-6x}
	\mp\frac{7}{247669456896}x^{12}e^{-13x/2}
	\\& \pm\frac{11}{36864}x^7e^{-13x/2}
	\pm\frac{168399}{64}x^2e^{-13x/2}
	+1215x^3e^{-7x}
	+\o(e^{-7x}) .
\end{align*}
}%
It turns out that these three transseries solutions converge
for large enough $x$.
As a check, take $x=10$ in $P$.  Maple says
the zeros are
\begin{align*}
	-28&.0317713673296286443879064009, 		
	\\-0&.0199881159048462608264265543923,
	\\0&.0204421151948799622524221088662,
	\\14&.0156586840197974714809554232+24.27610347738050183477184 i,
	\\14&.0156586840197974714809554232-24.27610347738050183477184 i .
\end{align*}
Plugging $x=10$ in the three series shown above (up to order
$e^{-7x}$), I get
\begin{align*}
	-28&.0317713673296286443879064\ 149,
	\\-0&.0199881159048462608264265\ 439647 ,
	\\0&.020442115194879962252422\ 0981049 .
\end{align*}

\subsection*{A Derivative and a Borel Summation}
Consider the Euler series
\begin{equation*}
	S = \sum_{k=0}^\infty \frac{k! e^x}{x^{k+1}}.
\end{equation*}
Differentiate term-by-term to get a telescoping sum
leaving only $S' = e^x/x$.  This means any summation method
that commutes with summation of series and differentiation
should yield the exponential integral function for $S$.
In fact, this is a case that can be done by classical Borel summation
(Example~\ref{eulerborel}).

\subsection*{A Compositional Inverse}
\begin{problm}\label{lambertseries}
Compute the compositional inverse of $x e^x$
\end{problm}
This inverse
is known as the Lambert $W$ function.  There is
a standard construction for all compositional inverses,
but we will proceed here directly.
First we need to know the dominant term.
This is done by ``reducing to height zero'' as follows.
If $x = W e^W$, then $\log x = W + \log W$,
so $W = \log x - \log W$ with $\log W \fst W$, and
thus $W \ato \log x$.

So assume our inverse is $\log x + Q$, with $Q \fst \log x$.  Then
$x = \big(\log x + Q\big) e^{\log x + Q}$ so
$x = (\log x + Q) x e^Q$ so
$e^{-Q} = \log x + Q$.  Now we should solve for one $Q$ in
terms of the other(s), and
use this to iterate.  If we take $Q = e^{-Q}-\log x$ and
iterate $\Phi(Q) = e^{-Q}-\log x$, it
doesn't work:  starting with $Q_0 = 0$, we get
$Q_1 = 1-\log x$, then $Q_2 = x/e - \log x$,
which is not converging.

So we will solve for the other one:
$Q = -\log(\log x + Q)$.  Write $\Phi(Q) = -\log(\log x + Q)$
and iterate.
Since we assume $Q \fst \log x$, the term
$Q/\log x$ is small.  So write
\begin{align*}
	\Phi(Q) &= 
	-\log\big(\log x + Q\big)
	= -\log\big((\log x) (1 + Q/\log x)\big)
	\\
	&=  -\log \log x + \sum_{k=1}^\infty \frac{(-1)^k}{k}
	\left(\frac{Q}{\log x}\right)^k .
\end{align*}
We will start with ratios $\mu_1 = 1/\log\log x$,
$\mu_2 = 1/\log x$.  So
\begin{equation*}
	\Phi(Q) = -\mu_1^{-1} + \sum_{k=1}^\infty \frac{(-1)^k}{k}
	(Q\mu_2)^k .
\end{equation*}
Start with $Q_0 = 0$.  Then $Q_1$ begins $-\mu_1^{-1}$,
so for the series in $\Phi(Q_1)$ we need $\mu_1^{-1} \mu_2 \fst 1$.
Of course $\mu_1^{-1} \mu_2 = \log\log x/\log x$ actually is small,
but not $\{\mu_1,\mu_2\}$-small.  So we add another ratio,
$\mu_3 = \log\log x/\log x = \mu_1^{-1}\mu_2$.  Now
computing with $\mu_2$ and $\mu_3$, iteration of
\begin{equation*}
	\Phi(Q) = -\mu_2^{-1}\mu_3 +
	\sum_{k=1}^\infty \frac{(-1)^k}{k}(Q\mu_2)^k
\end{equation*}
is just a matter of routine:
\begin{align*}
	Q_0 &=  0
	\\
	Q_1 &=  -\mu_2^{-1}\mu_3
	\\
	Q_2 &=  -\mu_2^{-1}\mu_3 + \mu_3 + \frac{1}{2}\mu_3^2 + \frac{1}{3}\mu_3^3
	+ \frac{1}{4}\mu_3^4 + \frac{1}{5}\mu_3^5 +\dots
	\\
	Q_3 &=  -\mu_2^{-1}\mu_3 + (1-\mu_2)\mu_3 +
	\left(\frac{1}{2}-\frac{3}{2}\mu_2+\frac{1}{2}\mu_2^2\right)\mu_3^2+\dots
\end{align*}
When we continue this, we get more and more terms which remain the
same from one step to the next.
I did this with Maple, keeping
terms with total degree at most $6$ in $\mu_2,\mu_3$.  When
it stops changing, I know I have the first terms of the answer.
Substituting in the values for $\mu_2$ and $\mu_3$,
writing $\l_1 = \log x$ and $\l_2 = \log\log x$, we get:
\begin{align*}
W(x) = {}& \l_1 -\l_2+\frac{\l_2}{\l_1}
+\frac{1}{2} \frac{\l_2^2}{\l_1^2}-\frac{\l_2}{\l_1^2}
+\frac{1}{3} \frac{\l_2^3}{\l_1^3}-\frac{3}{2} \frac{\l_2^2}{\l_1^3}
+\frac{\l_2}{\l_1^3}+\frac{1}{4} \frac{\l_2^4}{\l_1^4}
-\frac{11}{6} \frac{\l_2^3}{\l_1^4}+\frac{3 \l_2^2}{\l_1^4}
\\& -\frac{\l_2}{\l_1^4}+\frac{1}{5} \frac{\l_2^5}{\l_1^5}
-\frac{25}{12} \frac{\l_2^4}{\l_1^5}
+\frac{35}{6} \frac{\l_2^3}{\l_1^5}-\frac{5 \l_2^2}{\l_1^5}
+\frac{\l_2}{\l_1^5}+\frac{1}{6} \frac{\l_2^6}{\l_1^6}
-\frac{137}{60} \frac{\l_2^5}{\l_1^6}+\frac{75}{8} \frac{\l_2^4}{\l_1^6}
\\& -\frac{85}{6} \frac{\l_2^3}{\l_1^6}+\frac{15}{2} \frac{\l_2^2}{\l_1^6}
-\frac{\l_2}{\l_1^6}+\cdots .
\end{align*}

\section*{Contractive Mappings}
There is a general principle that explains why the sort of iterations
that we have seen will work.  It is a sort of ``fixed-point''
theorem for an appropriate type of ``contraction'' mappings.
Here is an explanation.

First consider a domination relation for sets
of multi-indices.

\begin{de}\label{def:dominate}
Let $\BE, \BF$ be subsets of $\Z^n$.  We say
$\BE$ \Def{dominates} $\BF$ iff for every $\bk \in \BF$, there is
$\bp \in \BE$ with $\bp < \bk$.
\end{de}

This may seem backward.  But correspondingly in the realm of transmonomials,
we will say larger monomials dominate smaller ones.

It's transitive: If $\BE_1$ dominates $\BE_2$ and $\BE_2$ dominates $\BE_3$,
then $\BE_1$ dominates $\BE_3$.  Every $\BE$ dominates $\emptyset$.

Recall (Proposition~\ref{magfin}) that $\Min \BE$
denotes the set of minimal elements of $\BE$.
And $\Min \BE$ is finite
if $\BE \subseteq \J_\bm$ for some $\bm$.

\begin{pr}\label{multistart}
Let $\BE, \BF$ be subsets of $\J_\bm$.  Then
$\BE$ dominates $\BF$ if and only if $\Min \BE$
dominates $\Min \BF$.
\end{pr}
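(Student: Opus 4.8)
The plan is to prove both implications directly from Proposition~\ref{magfin}, which guarantees that every element of a subset of $\J_\bm$ lies at or above some minimal element of that subset. Note that the finiteness of $\Min\BE$ and $\Min\BF$ is not needed here; all that is used is this ``domination-by-minimal-elements'' property together with transitivity of the componentwise order on $\Z^n$.

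For the forward direction, I would assume $\BE$ dominates $\BF$ and take $\bk\in\Min\BF\subseteq\BF$. Domination gives $\bp\in\BE$ with $\bp<\bk$, and Proposition~\ref{magfin} gives $\bp_0\in\Min\BE$ with $\bp_0\le\bp$. Then $\bp_0\le\bp<\bk$, and this forces $\bp_0<\bk$: if we had $\bp_0=\bk$ then $\bk\le\bp\le\bk$, hence $\bp=\bk$, contradicting $\bp<\bk$. Since $\bk\in\Min\BF$ was arbitrary, $\Min\BE$ dominates $\Min\BF$.

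For the reverse direction, I would assume $\Min\BE$ dominates $\Min\BF$ and take an arbitrary $\bk\in\BF$. Proposition~\ref{magfin} gives $\bk_0\in\Min\BF$ with $\bk_0\le\bk$, and domination of $\Min\BF$ by $\Min\BE$ gives $\bp_0\in\Min\BE\subseteq\BE$ with $\bp_0<\bk_0$. Then $\bp_0<\bk_0\le\bk$, and the same little bookkeeping shows $\bp_0<\bk$. Hence $\BE$ dominates $\BF$.

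The only point requiring any care---and thus the ``main obstacle,'' such as it is---is the repeated step of deducing a strict inequality from a chain consisting of one strict and one non-strict inequality on $\Z^n$, where $\le$ is the componentwise partial order rather than a total order; I would state that small lemma once and invoke it in both directions rather than re-argue it each time.
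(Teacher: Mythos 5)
Your proof is correct and follows exactly the same strategy as the paper's: both directions combine Proposition~\ref{magfin} (every element of a subset of $\J_\bm$ sits above some minimal element) with the given domination, in precisely the same order. The only difference is cosmetic---you spell out the small ``strict plus non-strict gives strict'' step that the paper leaves implicit.
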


\begin{proof}
Assume $\BE$ dominates $\BF$.  Let $\bk \in \Min \BF$.
Then $\bk \in \BF$, so there is $\bk_1 \in \BE$
with $\bk_1 < \bk$.  Then there is $\bk_0 \in \Min \BE$
with $\bk_0 \le \bk_1$.  So $\bk_0 < \bk$.

Conversely, assume $\Min \BE$ dominates $\Min \BF$.
Let $\bk \in \BF$.  Then there is $\bk_1 \in \Min \BF$
with $\bk_1 \le \bk$.  So there is $\bk_0 \in \Min \BE$
with $\bk_0 < \bk_1$.  Thus $\bk_0 \in \BE$ and
$\bk_0 < \bk$.
\end{proof}

\begin{pr}\label{multiproofs}
Let $\BE, \BF \subseteq \J_\bm$.
If $\BE$ dominates $\BF$, then $\Min \BE$ and $\Min \BF$ are disjoint.
\end{pr}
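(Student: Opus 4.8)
The plan is to argue directly from the two definitions involved, most cleanly by contradiction. Suppose $\BE$ dominates $\BF$ but $\Min\BE \cap \Min\BF \ne \emptyset$, and pick $\bk \in \Min\BE \cap \Min\BF$.

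First I would unwind membership in $\Min\BF$: by the definition of the set of minimal elements, $\bk \in \Min\BF$ entails $\bk \in \BF$. Then I would invoke the domination hypothesis (Definition~\ref{def:dominate}) for this particular $\bk \in \BF$: there exists $\bp \in \BE$ with $\bp < \bk$. But $\bk \in \Min\BE$ says exactly that no element $\bp \in \BE$ satisfies $\bp < \bk$. That is the contradiction, so $\Min\BE$ and $\Min\BF$ are disjoint. Equivalently, stated without contradiction: if $\bk \in \Min\BF$ then $\bk \in \BF$, so some $\bp \in \BE$ has $\bp < \bk$, which shows $\bk$ is not minimal in $\BE$, i.e. $\bk \notin \Min\BE$.

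I do not expect any obstacle: the statement is immediate once both definitions are made explicit, and it is really just the observation that a minimal element of $\BE$ cannot be strictly above any element of $\BE$. Note the hypothesis $\BE,\BF \subseteq \J_\bm$ is not actually needed for this assertion — it serves only to guarantee (via Proposition~\ref{magfin}) that $\Min\BE$ and $\Min\BF$ are finite, which matters elsewhere but not here.
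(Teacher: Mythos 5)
Your argument is correct and is essentially the paper's own proof: take $\bk \in \Min\BF$, use domination to produce $\bp \in \BE$ with $\bp < \bk$, and conclude $\bk \notin \Min\BE$. The side remark that the hypothesis $\BE,\BF\subseteq\J_\bm$ is not needed here (only for finiteness of $\Min\BE$, $\Min\BF$ elsewhere) is also accurate.
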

\begin{proof}
Assume $\BE$ dominates $\BF$.  If $\bk \in \Min \BF$, then $\bk \in \BF$,
so there is $\bk_1 \in \BE$ with $\bk_1 < \bk$.  So
even if $\bk \in \BE$, it is not minimal.
\end{proof}

\begin{pr}\label{Edominateprop}
Let $\BE_j \subseteq \J_\bm$, $j \in \N$, be an infinite sequence such that
$\BE_j$ dominates $\BE_{j+1}$ for all $j$.  Then the sequence
$(\BE_j)$ is point-finite; $\BE_j \to \emptyset$.
\end{pr}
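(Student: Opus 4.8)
The plan is to pass from each $\BE_j$ to its (finite) set of minimal elements $\Min\BE_j$ and exploit disjointness. First I would note that the domination relation is transitive (as remarked just before Proposition~\ref{multistart}), so $\BE_i$ dominates $\BE_j$ whenever $i<j$. Since each $\BE_j\subseteq\J_\bm$, Proposition~\ref{magfin} tells us $\Min\BE_j$ is finite, and Proposition~\ref{multiproofs} tells us $\Min\BE_i$ and $\Min\BE_j$ are disjoint whenever $i<j$. Hence $(\Min\BE_j)_{j\in\N}$ is a pairwise disjoint family of finite subsets of $\J_\bm$. This is the crux; note that point-finiteness of the $\BE_j$ themselves is not immediate, since distinct $\BE_j$ may overlap heavily.

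Next I would deduce point-finiteness of $(\BE_j)$. Fix $\bk\in\Z^n$. If $\bk\not\ge\bm$, then $\bk$ lies in no $\BE_j$, so assume $\bk\ge\bm$. For each index $j$ with $\bk\in\BE_j$, Proposition~\ref{magfin} provides some $\bk_0^{(j)}\in\Min\BE_j$ with $\bm\le\bk_0^{(j)}\le\bk$. These $\bk_0^{(j)}$ all lie in the set $\SET{\bp\in\Z^n}{\bm\le\bp\le\bk}$, which is finite because each coordinate ranges over a finite integer interval. Moreover the $\bk_0^{(j)}$ are pairwise distinct, since the sets $\Min\BE_j$ are pairwise disjoint. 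A finite set cannot contain infinitely many distinct elements, so there are only finitely many $j$ with $\bk\in\BE_j$; that is exactly point-finiteness of $(\BE_j)$.

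Finally, since $\BE_j\subseteq\J_\bm$ for all $j$ by hypothesis and $(\BE_j)$ is point-finite, the definition of convergence gives $\BE_j\mto\emptyset$, hence $\BE_j\to\emptyset$. The only step requiring any care is the bookkeeping in the second paragraph—lifting disjointness of the minimal sets back to point-finiteness of the $\BE_j$ via finiteness of the box below $\bk$—and even that is routine; the real work is entirely packaged in Propositions~\ref{magfin} and~\ref{multiproofs}.
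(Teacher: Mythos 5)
Your proof is correct and follows essentially the same route as the paper: fix a point, pass to the finite box of multi-indices below it, and observe that the pairwise-disjoint sets $\Min\BE_j$ (disjointness coming from Proposition~\ref{multiproofs} together with transitivity) can meet that finite box only finitely often. The only cosmetic difference is that you track an explicit witness $\bk_0^{(j)}\in\Min\BE_j$ in the box, while the paper phrases the same count in terms of the intersections $\BF\cap\Min\BE_j$ being disjoint and hence almost all empty.
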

\begin{proof}
Let $\bp \in \J_\bm$.  Then $\BF = \SET{\bk \in \J_\bm}{\bk \le \bp}$ is
finite.  But the sets $\BF \cap \Min \BE_j$ are disjoint
(by Proposition~\ref{multiproofs}), so all but finitely many
of them are empty.  For every $j$ with $\bp \in \BE_j$,
the set $\BF \cap \Min \BE_j$
is nonempty.  Therefore, $\bp \in \BE_j$ for only finitely
many $j$.
\end{proof}

\begin{pr}\label{pfinitedominate}
Let $\BE_i \subseteq \J_\bm$ be a point-finite family.
Assume $\BE_i$ dominates $\BF_i$ for all $i$.  Then
$(\BF_i)$ is also point-finite.
\end{pr}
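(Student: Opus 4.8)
The plan is to fix a single multi-index $\bk \in \Z^n$ and show it belongs to $\BF_i$ for only finitely many $i$; this is exactly what point-finiteness of $(\BF_i)$ requires. The whole point is that whenever $\bk \in \BF_i$, domination hands us a witness in $\BE_i$, and the containment $\BE_i \subseteq \J_\bm$ forces that witness to lie in one fixed finite box depending only on $\bm$ and $\bk$, not on $i$. Then point-finiteness of $(\BE_i)$ takes over.

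Concretely, I would introduce the box $F = \SET{\bp \in \Z^n}{\bm \le \bp \le \bk}$. In each of the $n$ coordinates only finitely many integers lie between $\bm[i]$ and $\bk[i]$ (and none at all if $\bm[i] > \bk[i]$), so $F$ is finite, possibly empty. Now suppose $\bk \in \BF_i$. Since $\BE_i$ dominates $\BF_i$, there is $\bp \in \BE_i$ with $\bp < \bk$; and $\bp \in \BE_i \subseteq \J_\bm$ gives $\bp \ge \bm$. Hence $\bm \le \bp \le \bk$, i.e. $\bp \in F$. This yields the inclusion
\[
	\SET{i \in I}{\bk \in \BF_i} \;\subseteq\; \bigcup_{\bp \in F} \SET{i \in I}{\bp \in \BE_i} .
\]

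To finish, I would invoke point-finiteness of $(\BE_i)$: each set $\SET{i \in I}{\bp \in \BE_i}$ on the right-hand side is finite, and a union indexed by the finite set $F$ of finitely many finite sets is finite. Hence $\SET{i \in I}{\bk \in \BF_i}$ is finite, and since $\bk$ was arbitrary, $(\BF_i)$ is point-finite. There is essentially no obstacle here; the only subtleties worth flagging are that ``dominates'' uses the strict order $\bp < \bk$ and that the sets $\BF_i$ are not themselves assumed to lie in any fixed $\J_{\bm'}$ — but neither point matters, since the argument only needs the witnesses $\bp$ to lie in $\J_\bm$, and the case $\bm \not\le \bk$ is absorbed automatically (the box $F$ is then empty).
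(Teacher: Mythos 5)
Your proof is correct and is essentially the paper's argument: domination places a witness $\bp \in \BE_i$ inside the fixed finite box between $\bm$ and $\bk$, and then point-finiteness of $(\BE_i)$ over that finite box forces $\bk \in \BF_i$ for only finitely many $i$. The paper routes the last step through $\Min \BE_j$ (a harmless but unnecessary refinement), whereas your direct union $\bigcup_{\bp \in F}\SET{i}{\bp \in \BE_i}$ is cleaner; you also explicitly dispose of the case $\bm \not\le \bk$, which the paper glosses over by starting with ``Let $\bp \in \J_\bm$.''
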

\begin{proof}
Let $\bp \in \J_\bm$.  Then $\BF = \SET{\bk \in \J_\bm}{\bk < \bp}$ is
finite.  But the collection of sets $\BF \cap \Min \BE_j$
is point-finite, so again all but finitely many of them are nonempty.
For every $j$ with $\bp \in \BF_j$, the set
$\BF \cap \Min \BE_j$ is nonempty.  Therefore, $\bp \in \BF_j$ for only finitely
many $j$.
\end{proof}

Next consider the corresponding notion for a grid-based
field $\T^{\ebmu}$ of transseries.

\begin{de}
For $\m,\n \in \GRID^{\ebmu}$, we write $\m \fst^{\ebmu} \n$
and we say $\n$ \Def{$\bmu$-dominates} $\m$ iff
$\m/\n \fst^{\ebmu} 1$ (that is, $\m/\n = \bmu^\bk$ for
some $\bk > \0$).
\end{de}

The following are easy. (They follow from Propositions
\ref{wellpartially}--\ref{magfin} using
Proposition~\ref{finitetoone}).
The grid $\GRID^{\ebmu,\bm}$ is well-partially-ordered for 
(the converse of) $\fgt^{\ebmu}$.

\begin{pr}
If $\AA \subseteq \GRID^{\ebmu,\bm}$, $\AA \ne \emptyset$, then there is a
$\bmu$-maximal element: $\m \in \AA$ and $\g \fgt^{\ebmu} \m$
for no $\g \in \AA$.
\end{pr}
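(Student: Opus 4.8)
The plan is to transfer the problem to multi-indices, where Propositions~\ref{wellpartially} and~\ref{magfin} apply, and then transport a maximal element forward along the map $\bk \mapsto \bmu^\bk$ (which is finite-to-one by Proposition~\ref{finitetoone}, so moving between monomials and multi-indices is harmless). First I would set
\begin{equation*}
	\BE = \SET{\bk \in \J_\bm}{\bmu^\bk \in \AA} ,
\end{equation*}
which is nonempty since $\AA \ne \emptyset$ and $\AA \subseteq \GRID^{\ebmu,\bm}$. By Proposition~\ref{magfin} the set $\Min \BE$ of minimal elements of $\BE$ is finite and nonempty, and every $\bk \in \BE$ satisfies $\bk \ge \bk_*$ for some $\bk_* \in \Min \BE$.

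Next I would record that $\fgt^{\ebmu}$ is a strict partial order on $\GRID^{\ebmu}$: transitivity is immediate from adding exponent vectors, and irreflexivity holds because $\bp > \0$ forces $\bmu^\bp \fst 1$ (as already noted for ratio sets), hence $\bmu^\bp \ne 1$. Consequently the finite nonempty set $F = \SET{\bmu^\bk}{\bk \in \Min \BE} \subseteq \AA$ has a $\fgt^{\ebmu}$-maximal element; fix one, say $\m = \bmu^{\bk_0}$ with $\bk_0 \in \Min \BE$. Note $\m \in \AA$.

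The claim is then that this $\m$ is $\bmu$-maximal in all of $\AA$, and I would prove it by contradiction: suppose $\g \in \AA$ with $\g \fgt^{\ebmu} \m$. Since $\g \in \GRID^{\ebmu,\bm}$, write $\g = \bmu^{\bk_1}$ with $\bk_1 \in \J_\bm$, so $\bk_1 \in \BE$, and pick $\bk_* \in \Min \BE$ with $\bk_* \le \bk_1$. Then $\g/\bmu^{\bk_*} = \bmu^{\bk_1 - \bk_*}$ with $\bk_1 - \bk_* \ge \0$, so either $\g = \bmu^{\bk_*}$ or $\bmu^{\bk_*} \fgt^{\ebmu} \g$; together with $\g \fgt^{\ebmu} \m$ this yields $\bmu^{\bk_*} \fgt^{\ebmu} \m$ (by transitivity in the second case), with $\bmu^{\bk_*} \in F$—contradicting the $\fgt^{\ebmu}$-maximality of $\m$ in $F$.

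I expect the only genuine subtlety to be this last step, together with the reason a single minimal multi-index does not suffice: because $\bk \mapsto \bmu^\bk$ need not be injective, a $\bmu$-larger monomial $\g$ may be represented by a multi-index incomparable to $\bk_0$, which is why I pass to the whole finite set $\Min \BE$ and choose $\m$ maximal within its (finite) image. As a quicker alternative one can instead invoke the already-established fact that $\GRID^{\ebmu,\bm}$ is well ordered by the converse of $\fgt$, so $\AA$ has a genuine $\fgt$-greatest element $\m$; since $\g \fgt^{\ebmu} \m$ implies $\m/\g \fst 1$ and hence $\g \fgt \m$, such an $\m$ is automatically $\bmu$-maximal.
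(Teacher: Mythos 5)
Your proposal is correct. The paper leaves this proposition unproved, saying only that it (together with its neighbours) follows from Propositions~\ref{wellpartially}--\ref{magfin} using Proposition~\ref{finitetoone}; your main argument, transferring $\AA$ to the multi-index set $\BE$, extracting the finite nonempty $\Min\BE$ via Proposition~\ref{magfin}, and then choosing a $\fgt^{\ebmu}$-maximal element of its image, fills in exactly the kind of detail the paper is pointing toward, and your handling of the possible non-injectivity of $\bk\mapsto\bmu^\bk$ (by working with all of $\Min\BE$ rather than a single minimal multi-index) is the one genuine subtlety. Your ``quicker alternative'' is in fact the cleanest route and is essentially the observation the paper already makes when proving that grids are well ordered: take the $\fgt$-greatest element $\m$ of $\AA$ (which exists because $\GRID^{\ebmu,\bm}$ is well ordered by the converse of $\fgt$), and note that $\g\fgt^{\ebmu}\m$ forces $\g\fgt\m$, so $\fgt$-greatest automatically implies $\bmu$-maximal; this sidesteps the multi-index bookkeeping entirely at the cost of relying on the earlier grid proposition rather than reproving it from scratch.
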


\begin{pr}\label{mu-sequence}
Let $\AA \subseteq \GRID^{\ebmu,\bm}$ be infinite.  Then there is
a sequence $\g_j \in \AA$, $j \in \N$, with
$\g_0 \fgt^{\ebmu} \g_1 \fgt^{\ebmu} \g_2  \fgt^{\ebmu} \cdots$.
\end{pr}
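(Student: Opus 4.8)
The plan is to transport the multi-index result Proposition~\ref{increasing} across the correspondence $\bk \mapsto \bmu^\bk$. First I would set $\BE = \SET{\bk \in \J_\bm}{\bmu^\bk \in \AA}$. By the definition of the grid, every element of $\GRID^{\ebmu,\bm}$ is of the form $\bmu^\bk$ with $\bk \in \J_\bm$, so the map $\BE \to \AA$, $\bk \mapsto \bmu^\bk$, is surjective; since $\AA$ is infinite, $\BE$ is infinite as well. (Proposition~\ref{finitetoone} moreover says the fibres are finite, which is the finite-to-one remark alluded to in the surrounding text, but surjectivity alone suffices here.)

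Next I would apply Proposition~\ref{increasing} to $\BE$ to obtain a sequence $\bk_0 < \bk_1 < \bk_2 < \cdots$ in $\BE$, and set $\g_j = \bmu^{\bk_j} \in \AA$. For each $j$ the increment $\bk_{j+1}-\bk_j$ lies in $\N^n$ and is nonzero, hence $\bk_{j+1}-\bk_j > \0$; therefore $\g_{j+1}/\g_j = \bmu^{\bk_{j+1}-\bk_j}$ with $\bk_{j+1}-\bk_j > \0$, which is precisely the statement that $\g_{j+1} \fst^{\ebmu} \g_j$, i.e.\ $\g_j \fgt^{\ebmu} \g_{j+1}$. This produces the desired strictly $\fgt^{\ebmu}$-decreasing sequence in $\AA$.

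No serious obstacle arises; the one point demanding care is the direction of the inequality. Because $\bk \mapsto \bmu^\bk$ reverses order, an \emph{increasing} chain of multi-indices gives a $\fgt$-\emph{decreasing} chain of monomials, and one must observe that ``increment $> \0$'' is exactly the condition that makes $\g_{j+1}$ manifestly $\bmu$-smaller than $\g_j$, not merely $\fst$-smaller --- this is why the argument works for $\fgt^{\ebmu}$ and not just $\fgt$. As a free bonus, the $\g_j$ come out pairwise distinct: the relation $\fst^{\ebmu}$ is transitive, and it is irreflexive since $\bmu^\bk \fst 1$ for every $\bk > \0$, so a strictly $\fgt^{\ebmu}$-decreasing chain cannot repeat a term. (This distinctness is not needed for the statement, but it is reassuring.)
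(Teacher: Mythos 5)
Your proposal is correct and follows the route the paper intends: the paper gives no explicit proof, only the parenthetical note that the propositions in this block ``follow from Propositions \ref{wellpartially}--\ref{magfin} using Proposition~\ref{finitetoone}.'' You carry this out precisely by pulling $\AA$ back to $\BE = \SET{\bk \in \J_\bm}{\bmu^\bk \in \AA}$, applying Proposition~\ref{increasing}, and pushing forward. Your observation that for this particular statement the finite-to-one property is not actually needed (surjectivity already forces $\BE$ infinite from $\AA$ infinite) is a small but accurate refinement over the paper's blanket hint; the finite-to-one fact is what other propositions in the block (e.g.\ finiteness of $\Max^{\ebmu}\AA$) genuinely require. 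The care you take with the order reversal and with why the conclusion is $\fgt^{\ebmu}$ rather than merely $\fgt$ is exactly the point worth spelling out.
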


\begin{pr}
Let $\AA \subseteq \GRID^{\ebmu,\bm}$.  Then the set $\Max^{\ebmu} \AA$
of $\bmu$-maximal elements of $\AA$ is finite.  For every $\g \in \AA$
there is $\m \in \Max^{\ebmu} \AA$ with $\g \fste^{\ebmu} \m$.
\end{pr}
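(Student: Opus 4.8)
\textit{Proof proposal.} The plan is to transport Dickson's lemma (Propositions~\ref{wellpartially}--\ref{magfin}) from $\J_\bm$ to the grid $\GRID^{\ebmu,\bm}$ along the surjection $\bk \mapsto \bmu^\bk$, the only complication being that this map need not be injective --- and Proposition~\ref{finitetoone} is exactly the device that absorbs that. Throughout I would set $\BE = \SET{\bk \in \J_\bm}{\bmu^\bk \in \AA}$, so that $\AA = \SET{\bmu^\bk}{\bk \in \BE}$, and keep in mind that $\bmu^\bp \fgt^{\ebmu} \bmu^\bk$ means $\bmu^\bk/\bmu^\bp$ can be written $\bmu^{\mathbf q}$ with $\mathbf q > \0$ (which happens in particular, but not only, when $\bp < \bk$ as vectors).

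For finiteness of $\Max^{\ebmu}\AA$: if $\m \in \Max^{\ebmu}\AA$ and $\bmu^\bk = \m$ with $\bk \in \BE$, then $\bk$ must be minimal in $\BE$, for otherwise some $\bp \in \BE$ has $\bp < \bk$, giving $\bmu^\bp \in \AA$ with $\bmu^\bp \fgt^{\ebmu} \bmu^\bk = \m$ (since $\bmu^\bk/\bmu^\bp = \bmu^{\bk-\bp}$ is $\bmu$-small), contradicting $\bmu$-maximality of $\m$. Hence $\Max^{\ebmu}\AA \subseteq \SET{\bmu^\bk}{\bk \in \Min\BE}$, a finite set by Proposition~\ref{magfin}. (One could instead quote Proposition~\ref{mu-sequence}: an infinite $\Max^{\ebmu}\AA$ would contain an infinite strictly $\fgt^{\ebmu}$-decreasing sequence, contradicting maximality of its later entries.)

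For the dominating maximal element, I would fix $\g \in \AA$ and pass to the ``up-set'' $\BB = \SET{\g' \in \AA}{\g' \fgte^{\ebmu} \g}$. The step I expect to be the real obstacle is showing $\BB$ is finite: writing $\g = \bmu^\bk$, any $\g' = \bmu^\bp \in \BB$ satisfies $\bmu^{\bp+\mathbf q} = \g$ for some $\mathbf q \ge \0$ (take $\bmu^{\mathbf q} = \g/\g'$), with $\bp + \mathbf q \in \J_\bm$; by Proposition~\ref{finitetoone} there are only finitely many admissible $\bp + \mathbf q$, and as $\bm \le \bp \le \bp+\mathbf q$ this forces finitely many $\bp$, hence finitely many $\g'$. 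Since $\BB$ is finite, nonempty (it contains $\g$), and $\fgt^{\ebmu}$ is transitive, $\BB$ has a $\fgt^{\ebmu}$-maximal element $\m$; and $\m$ is $\bmu$-maximal in all of $\AA$, because $\g'' \in \AA$ with $\g'' \fgt^{\ebmu} \m$ would give $\g'' \fgt^{\ebmu} \m \fgte^{\ebmu}\g$, so $\g'' \in \BB$, contradicting maximality of $\m$ in $\BB$. As $\m \in \BB$, we have $\g \fste^{\ebmu} \m$, which is what was wanted. The remaining points --- transitivity of $\fgt^{\ebmu}$ and that a finite nonempty set has a $\fgt^{\ebmu}$-maximal element --- are routine.
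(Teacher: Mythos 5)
Your proposal is correct and fills in, with all the necessary care, exactly the argument the paper only gestures at (``they follow from Propositions~\ref{wellpartially}--\ref{magfin} using Proposition~\ref{finitetoone}''): you transport Dickson's lemma along the map $\bk\mapsto\bmu^\bk$ and use Proposition~\ref{finitetoone} precisely where non-injectivity could cause trouble.

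One small point worth flagging explicitly, since you only mention transitivity: to conclude that a finite nonempty set has a $\fgt^{\ebmu}$-maximal element, you also need $\fgt^{\ebmu}$ to be irreflexive, which holds because $\bmu^\bq \fst 1$ (hence $\bmu^\bq \ne 1$) whenever $\bq > \0$. This same observation is also the reason that, in your first paragraph, $\bmu^\bp$ with $\bp < \bk$ is genuinely a \emph{different} element of $\AA$ from $\m = \bmu^\bk$, so that it really does witness non-maximality. Neither is a gap so much as an unstated step; the proof is sound.
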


\begin{de}\label{Edominates}
Let $\AA, \BB \subseteq \G$.  We say
$\AA$ \Def{$\bmu$-dominates} $\BB$ (and write $\AA \fgt^{\ebmu} \BB$) iff for
all $\fb \in \BB$ there exists $\fa \in \AA$
such that $\fa \fgt^{\ebmu} \fb$.  Let $S,T \in \T^{\ebmu}$. We say
$S$ \Def{$\bmu$-dominates} $T$ (and write $S \fgt^{\ebmu} T$) iff
$\supp S$ $\bmu$-dominates $\supp T$.  Note that this agrees
with the previous definitions for $\fgt^{\ebmu}$ when
$S=1$ or when $S,T \in \GRID^{\ebmu}$.
\end{de}

\begin{re}
$S \fgt T$ if and only if there exists $\bmu$ such that
$S \fgt^{\ebmu} T$.
\end{re}

\begin{re}
Use of $\fgt^{\ebmu}$ requires caution (at least for non-monomials),
because it does not always have the properties of $\fgt$.
For example: $\bmu$-dominance is not preserved by multiplication.
That is: $A \fst^{\ebmu} B$ does not imply $AS \fst^{\ebmu} BS$.
For example, let $\ebmu = \{x^{-1},e^{-x}\}$,
$A = x^{-2}+e^{-2x}$, $B = x^{-1}+e^{-x}+xe^{-2x}$, and
$S = x^{-1}-e^{-x}$.  The term $x^{-1}e^{-2x}$ of $AS$
is not $\bmu$-dominated by any term of $BS$.
\end{re}

The following four propositions are proved as in multi-indices
(Propositions \ref{multistart} to~\ref{pfinitedominate}).

\begin{pr}
Let $\AA, \BB \subseteq \GRID^{\ebmu,\bm}$.  Then
$\AA \fgt^{\ebmu} \BB$ if and only if
$\Max^{\ebmu} \AA \fgt^{\ebmu} \Max^{\ebmu} \BB$.
\end{pr}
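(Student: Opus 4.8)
The plan is to mirror, at the level of monomials, the proof of Proposition~\ref{multistart} for multi-indices, without passing through the correspondence $\bk\mapsto\bmu^\bk$ at all. Two ingredients, both already established, do all the work. The first is the earlier proposition asserting that for $\AA\subseteq\GRID^{\ebmu,\bm}$ the set $\Max^{\ebmu}\AA$ is finite, is contained in $\AA$, and is such that every $\g\in\AA$ satisfies $\g\fste^{\ebmu}\m$ for some $\m\in\Max^{\ebmu}\AA$ (these well-partial-order facts follow from Propositions~\ref{wellpartially}--\ref{magfin} together with the finite-to-one map of Proposition~\ref{finitetoone}). The second is that $\fgt^{\ebmu}$ is transitive on a grid, including the mixed forms in which one of the two steps is weakened to $\fgte^{\ebmu}$: if $\m_1/\m_0=\bmu^{\bk_1}$ with $\bk_1>\0$ and $\g/\m_1=\bmu^{\bk_2}$ with $\bk_2>\0$, then $\g/\m_0=\bmu^{\bk_1+\bk_2}$ with $\bk_1+\bk_2>\0$, and allowing either of $\bk_1,\bk_2$ to equal $\0$ still leaves the resulting exponent $>\0$.

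For the forward implication I would assume $\AA\fgt^{\ebmu}\BB$, take $\n\in\Max^{\ebmu}\BB\subseteq\BB$, invoke Definition~\ref{Edominates} to get $\fa\in\AA$ with $\fa\fgt^{\ebmu}\n$, then lift $\fa$ to some $\m\in\Max^{\ebmu}\AA$ with $\fa\fste^{\ebmu}\m$ (equivalently $\m\fgte^{\ebmu}\fa$), and conclude $\m\fgt^{\ebmu}\n$ by the mixed transitivity. Since this works for every $\n\in\Max^{\ebmu}\BB$ and each such $\m$ lies in $\Max^{\ebmu}\AA$, it gives $\Max^{\ebmu}\AA\fgt^{\ebmu}\Max^{\ebmu}\BB$.

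For the converse I would assume $\Max^{\ebmu}\AA\fgt^{\ebmu}\Max^{\ebmu}\BB$, take an arbitrary $\fb\in\BB$, pick $\n\in\Max^{\ebmu}\BB$ with $\fb\fste^{\ebmu}\n$ (so $\n\fgte^{\ebmu}\fb$), obtain from the hypothesis an $\m\in\Max^{\ebmu}\AA\subseteq\AA$ with $\m\fgt^{\ebmu}\n$, and then combine $\m\fgt^{\ebmu}\n$ with $\n\fgte^{\ebmu}\fb$ via mixed transitivity to get $\m\fgt^{\ebmu}\fb$. Thus every $\fb\in\BB$ is $\bmu$-dominated by an element of $\AA$, which is exactly $\AA\fgt^{\ebmu}\BB$.

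I do not expect a genuine obstacle; the content of the statement is that it is the verbatim transliteration of Proposition~\ref{multistart} into the grid setting. The only thing requiring care is to use $\fgt^{\ebmu}$ solely through its order-theoretic behaviour on the fixed grid $\GRID^{\ebmu,\bm}$---transitivity, and the existence of a $\bmu$-maximal element above any given one---and never through any multiplicative property, since, as the remark preceding the proposition warns, $\bmu$-dominance is not preserved by multiplication. It is precisely because ``dominates'' in the multi-index setting was likewise phrased purely in terms of the partial order that that earlier proof carries over unchanged.
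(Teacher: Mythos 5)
Your proof is correct and is exactly what the paper intends: the paper simply remarks that this proposition (together with the three surrounding it) ``are proved as in multi-indices (Propositions~\ref{multistart} to~\ref{pfinitedominate})'', and your argument is the literal transliteration of the proof of Proposition~\ref{multistart}, replacing $\Min$ by $\Max^{\ebmu}$ and $<$ by $\fgt^{\ebmu}$, with the ``mixed transitivity'' observation supplying the analogue of $\bk_0\le\bk_1<\bk\Rightarrow\bk_0<\bk$.
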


\begin{pr}
Let $\AA, \BB \subseteq \GRID^{\ebmu,\bm}$.
If $\AA \fgt^{\ebmu} \BB$, then $\Max^{\ebmu} \AA$ and $\Max^{\ebmu} \BB$
are disjoint.
\end{pr}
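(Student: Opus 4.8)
The plan is to imitate the proof of Proposition~\ref{multiproofs} essentially word for word, reading ``$\bmu$-dominates'' for ``dominates'', ``$\bmu$-maximal'' for ``minimal'', and $\fgt^{\ebmu}$ for $<$. First I would fix an arbitrary $\m \in \Max^{\ebmu}\BB$. Since $\m \in \BB$ and $\AA \fgt^{\ebmu} \BB$, Definition~\ref{Edominates} gives some $\fa \in \AA$ with $\fa \fgt^{\ebmu} \m$, i.e.\ $\m/\fa = \bmu^\bk$ for some $\bk > \0$. I would observe that $\fa \ne \m$: because $\bk > \0$ forces $\bmu^\bk \fst 1$, we have $\m/\fa \fst 1$, so $\m/\fa \ne 1$. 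Hence, whether or not $\m$ itself lies in $\AA$, the element $\fa \in \AA$ witnesses that $\m$ is not $\bmu$-maximal in $\AA$, so $\m \notin \Max^{\ebmu}\AA$. As $\m$ was arbitrary in $\Max^{\ebmu}\BB$, the two sets are disjoint.

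An alternative route, suggested by the remark preceding the statement, is to pass through multi-indices: set $\BE = \SET{\bk \in \J_\bm}{\bmu^\bk \in \AA}$ and $\BF = \SET{\bk \in \J_\bm}{\bmu^\bk \in \BB}$, use Proposition~\ref{finitetoone} to control the (finite-to-one) correspondence $\bk \mapsto \bmu^\bk$, check that ``$\AA \fgt^{\ebmu}\BB$'' translates to ``$\BE$ dominates $\BF$'' and that $\Max^{\ebmu}$ corresponds to $\Min$ under this correspondence, and then invoke Proposition~\ref{multiproofs}. The wrinkle here is that $\bk \mapsto \bmu^\bk$ need not be injective, so a monomial in $\Max^{\ebmu}\AA$ may arise from several indices that are not all minimal; but since we only need the implication $\m \in \Max^{\ebmu}\BB \Rightarrow \m \notin \Max^{\ebmu}\AA$ (and the translation respects it), no harm is done. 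I expect the direct argument of the first paragraph to be shorter and cleaner, so that is the one I would write up.

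The closest thing to an obstacle is the warning in the preceding Remark that $\fgt^{\ebmu}$ behaves badly on non-monomials. That concern does not apply here: every element of $\AA$ and $\BB$ is a single transmonomial, and $\fgt^{\ebmu}$ restricted to $\GRID^{\ebmu}$ is an honest strict partial order whose converse makes $\GRID^{\ebmu,\bm}$ well-partially-ordered (as recorded just before Proposition~\ref{mu-sequence}). In particular ``$\fa \fgt^{\ebmu}\m$ with $\fa \in \AA$'' genuinely rules out $\m \in \Max^{\ebmu}\AA$, which is exactly what the argument needs.
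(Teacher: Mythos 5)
Your first paragraph is exactly the paper's intended proof: the paper simply states that this (and the three neighboring propositions) ``are proved as in multi-indices,'' and your word-for-word translation of Proposition~\ref{multiproofs}, with the case split on whether $\m$ happens to lie in $\AA$, is precisely what that instruction asks for. The argument is correct, and you rightly set aside the alternative route via $\bk \mapsto \bmu^\bk$.
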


\begin{pr}\label{mudominateprop}
Let $\AA_j \subseteq \GRID^{\ebmu,\bm}$, $j \in \N$, be an infinite
sequence such that $\AA_j \fgt^{\ebmu} \AA_{j+1}$ for all $j$.
Then the sequence $(\AA_j)$ is point-finite
\end{pr}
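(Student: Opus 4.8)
The plan is to transcribe, almost verbatim, the proof of Proposition~\ref{Edominateprop}: replace $\Min$ by $\Max^{\ebmu}$ throughout, use the disjointness proposition just proved in place of Proposition~\ref{multiproofs}, and insert Proposition~\ref{finitetoone} (the finite-to-one property) at the one spot where the multi-index argument relies on a principal interval $\SET{\bk}{\bm \le \bk \le \bp}$ being finite. The only step that is not a mechanical translation is precisely that finiteness claim, since $\bk \mapsto \bmu^\bk$ may fail to be injective.

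Concretely, I would fix a monomial $\g_0 \in \G$ and reduce to showing $\g_0 \in \AA_j$ for only finitely many $j$. If $\g_0 \notin \GRID^{\ebmu,\bm}$ this is vacuous; otherwise set
\begin{equation*}
	\BF = \SET{\g \in \GRID^{\ebmu,\bm}}{\g \fgte^{\ebmu} \g_0},
\end{equation*}
and check that $\BF$ is finite. If $\g = \bmu^\bk \in \BF$ with $\bk \ge \bm$, then $\g_0/\g = \bmu^{\bk'}$ for some $\bk' \ge \0$, so $\g_0 = \bmu^{\bk+\bk'}$ with $\bk + \bk' \in \J_\bm$. By Proposition~\ref{finitetoone} there are only finitely many $\bp \in \J_\bm$ with $\bmu^\bp = \g_0$, and $\bk$ lies in one of the finitely many boxes $\SET{\bp'}{\bm \le \bp' \le \bp}$ attached to these $\bp$ (namely the one with $\bp = \bk+\bk'$). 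Hence $\bk$, and therefore $\g = \bmu^\bk$, ranges over a finite set. This is the crux; everything else is routine.

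It remains to mimic the rest of the proof of Proposition~\ref{Edominateprop}. Set $\bmu$-dominance is transitive (a product of two monomials of the form $\bmu^\bk$ with $\bk > \0$ is again of that form), so from $\AA_0 \fgt^{\ebmu}\AA_1 \fgt^{\ebmu}\cdots$ we obtain $\AA_i \fgt^{\ebmu}\AA_j$ for all $i < j$, whence by the disjointness proposition $\Max^{\ebmu}\AA_i$ and $\Max^{\ebmu}\AA_j$ are disjoint for $i < j$. Thus the sets $\BF \cap \Max^{\ebmu}\AA_j$, $j \in \N$, are pairwise disjoint subsets of the finite set $\BF$, so all but finitely many are empty. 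Finally, if $\g_0 \in \AA_j$ then, by the proposition that every element of $\AA_j$ lies $\fste^{\ebmu}$ below some element of $\Max^{\ebmu}\AA_j$, there is $\m \in \Max^{\ebmu}\AA_j$ with $\g_0 \fste^{\ebmu}\m$, so $\m \in \BF$ and $\BF \cap \Max^{\ebmu}\AA_j \ne \emptyset$. Therefore $\g_0 \in \AA_j$ for only finitely many $j$, i.e. the family $(\AA_j)$ is point-finite.
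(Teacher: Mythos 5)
Your proof is correct and follows exactly the approach the paper indicates (the paper merely states that the four $\bmu$-dominance propositions ``are proved as in multi-indices''), namely a faithful translation of the proof of Proposition~\ref{Edominateprop} with $\Min$ replaced by $\Max^{\ebmu}$, the interval $\SET{\bk}{\bk \le \bp}$ replaced by $\BF = \SET{\g}{\g \fgte^{\ebmu} \g_0}$, and the multi-index disjointness lemma replaced by its $\bmu$-analogue. You have also correctly isolated and filled in the one non-mechanical step, the finiteness of $\BF$, which genuinely needs Proposition~\ref{finitetoone} because $\bk \mapsto \bmu^\bk$ need not be injective.
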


\begin{pr}\label{mufinitedominate}
Let $\AA_i \subseteq \GRID^{\ebmu,\bm}$ be a point-finite family.
Assume $\AA_i \fgt^{\ebmu} \BB_i$ for all $i$.  Then
$\BB_i \subseteq \GRID^{\ebmu,\bm}$, $i \in \N$, and
the family $(\BB_i)$ is also point-finite.
\end{pr}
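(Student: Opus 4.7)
The plan is to mirror the proof of Proposition~\ref{pfinitedominate}, with the correspondence $\bk \mapsto \bmu^\bk$ mediated via Proposition~\ref{finitetoone}. Two things have to be checked: first that $\BB_i \subseteq \GRID^{\ebmu,\bm}$, and second that each $\g_0 \in \G$ lies in only finitely many $\BB_i$.

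For the containment, let $\fb \in \BB_i$. By hypothesis there is $\fa \in \AA_i$ with $\fa \fgt^{\ebmu} \fb$. Since $\AA_i \subseteq \GRID^{\ebmu,\bm}$ we may write $\fa = \bmu^\bp$ with $\bp \ge \bm$; and $\fb/\fa \fst^{\ebmu} 1$ gives $\fb = \fa\,\bmu^\bk$ for some $\bk > \0$. Then $\fb = \bmu^{\bp+\bk}$ and $\bp+\bk \ge \bm$, so $\fb \in \GRID^{\ebmu,\bm}$.

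For point-finiteness, fix $\g_0 \in \G$. If no $\BB_i$ contains $\g_0$ there is nothing to prove, so assume $\g_0 \in \BB_{i_0}$ for some $i_0$; then $\g_0 \in \GRID^{\ebmu,\bm}$ by the previous step. Set
\begin{equation*}
\BF = \SET{\fa \in \GRID^{\ebmu,\bm}}{\fa \fgt^{\ebmu} \g_0}.
\end{equation*}
Granting that $\BF$ is finite, whenever $\g_0 \in \BB_i$ the witness $\fa_i \in \AA_i$ lies in $\BF$; since $(\AA_i)$ is point-finite and $\BF$ is finite, only finitely many $i$ can satisfy $\AA_i \cap \BF \ne \emptyset$, giving the desired conclusion.

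It remains to show $\BF$ is finite. Suppose $\fa_j \in \BF$, $j \in \N$, are pairwise distinct. Choose $\bp_j \ge \bm$ with $\bmu^{\bp_j} = \fa_j$, and $\bk_j > \0$ with $\g_0/\fa_j = \bmu^{\bk_j}$. Then $\g_0 = \bmu^{\bp_j+\bk_j}$ with $\bp_j+\bk_j \in \J_\bm$, so by Proposition~\ref{finitetoone} the multi-indices $\bp_j+\bk_j$ take only finitely many values; after passing to a subsequence assume $\bp_j+\bk_j = \bq$ for a fixed $\bq$. Now $\bp_j < \bq$ in $\J_\bm$, and the set $\SET{\bp \in \J_\bm}{\bp < \bq}$ is a finite box in $\N^n$, forcing the $\bp_j$ to take only finitely many values; hence so do the $\fa_j = \bmu^{\bp_j}$, a contradiction. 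The main obstacle is precisely this last step: because $\bk \mapsto \bmu^\bk$ need not be injective, one cannot read off $\bp_j < \bq$ directly from a relation among monomials, so Proposition~\ref{finitetoone} must first be used to pin $\bp_j+\bk_j$ to a single value before the multi-index bound can be invoked.
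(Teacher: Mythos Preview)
Your proof is correct and follows the approach the paper indicates: the paper merely says this proposition is ``proved as in multi-indices'' (Proposition~\ref{pfinitedominate}), and your argument is exactly that transport, with the extra care via Proposition~\ref{finitetoone} needed because $\bk \mapsto \bmu^\bk$ is only finite-to-one. The containment $\BB_i \subseteq \GRID^{\ebmu,\bm}$, the reduction to finiteness of $\BF = \SET{\fa \in \GRID^{\ebmu,\bm}}{\fa \fgt^{\ebmu} \g_0}$, and your box argument for that finiteness are all sound.
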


\begin{de}
Let $\Phi$ be linear from some subspace of $\T^{\ebmu}$ to itself.
Then we say $\Phi$ is \Def{$\bmu$-contractive} iff
$T \fgt^{\ebmu} \Phi(T)$ for all $T$ in the subspace.
\end{de}

\begin{de}
Let $\Phi$ be possibly non-linear from
some subset $\A$ of $\T^{\ebmu}$ to itself.
Then we say $\Phi$ is \Def{$\bmu$-contractive} iff
$(S-T) \fgt^{\ebmu} \big(\Phi(S)-\Phi(T)\big)$ for all $S,T \in \A$
with $S \ne T$.
\end{de}

There is an easy way to define a linear $\bmu$-contractive
map $\Phi$.
If $\Phi$ is defined on all monomials
$\g \in \AA \subseteq \GRID^{\ebmu,\bm}$
and $\g \fgt^{\ebmu} \Phi(\g)$ for them,
then the family $(\supp \Phi(\g))$ is point-finite by
Proposition~\ref{mufinitedominate}, so
\begin{equation*}
	\Phi\left(\sum c_\g \g\right) = \sum c_\g \Phi(\g)
\end{equation*}
$\bmu$-converges and defines $\Phi$ on the span.

\begin{ex}
The set $\bmu$ of ratios is important.  (In fact, this is the
reason we have been paying so much attention to the ratio
set $\bmu$.)  We cannot simply replace
``$\bmu$-small'' by ``small'' in the definitions.
Suppose $\Phi(x^{-j}) = x^j e^{-x}$ for all
$j \in \N$, and $\Phi(\g) = \g x^{-1}$ for all other monomials.
Then $\g \fgt \Phi(\g)$ for all $\g$.  But $\Phi(\sum x^{-j})$ evaluated
termwise is not a legal transseries.  Or:
Define $\Phi(x^{-j}) = e^{-x}$ for all
$j \in \N$, and $\Phi(\g) = \g x^{-1}$ for all other monomials.
Again $\g \fgt \Phi(\g)$ for all $\g$, but the family $\supp \Phi(x^{-j})$
is not point-finite.
\end{ex}

\begin{thm}[Grid-Based Fixed-Point Theorem]\label{costinfixed}
{\rm (i)} If $\Phi$ is linear and $\bmu$-con\-trac\-tive on $\T^{\ebmu,\bm}$,
then for any $T_0 \in \T^{\ebmu,\bm}$, the fixed-point equation
$T = \Phi(T)+T_0$ has a unique solution $T \in \T^{\ebmu,\bm}$.
{\rm (ii)}~If $\A \subseteq \T^{\ebmu,\bm}$ is nonempty and closed
{\rm (}in the asymptotic topology{\rm )}, and nonlinear
$\Phi \takes \A \to \A$ is $\bmu$-contractive on $\A$, then
$T=\Phi(T)$ has a unique solution in $\A$.
{\rm (From \cite[Theorem~15]{costintop}.  See
\cite[\S 6.5]{hoeven} and \cite{hoevenoperators}.)}
\end{thm}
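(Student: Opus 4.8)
The plan is to handle (ii) first by iteration, and then obtain (i) as the special case $\A=\T^{\ebmu,\bm}$, $\Phi(T)\mapsto\Phi(T)+T_0$ (or, equivalently, by writing down the Neumann series explicitly). For (ii): fix any $T_0\in\A$ and set $T_{j+1}=\Phi(T_j)$, so $T_j\in\A\subseteq\T^{\ebmu,\bm}$ for all $j$. The key point is that $\bmu$-contractivity applied to the pair $(T_j,T_{j+1})$ gives $(T_j-T_{j+1})\fgt^{\ebmu}(T_{j+1}-T_{j+2})$ for every $j$; hence the supports $\supp(T_j-T_{j+1})\subseteq\GRID^{\ebmu,\bm}$ form a $\bmu$-dominating chain, and Proposition~\ref{mudominateprop} makes the family $\big(\supp(T_j-T_{j+1})\big)$ point-finite. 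Thus $\sum_j(T_j-T_{j+1})$ converges, and since its partial sums telescope to $T_0-T_j$, the sequence $(T_j)$ converges to some $T_\infty$; since $\A$ is closed, $T_\infty\in\A$.

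Next I would verify $T_\infty$ is a fixed point. Contractivity applied to $(T_j,T_\infty)$ gives $(T_j-T_\infty)\fgt^{\ebmu}\big(\Phi(T_j)-\Phi(T_\infty)\big)$. By Proposition~\ref{Emultcontin} we have $T_j-T_\infty\to 0$, hence the family $\big(\supp(T_j-T_\infty)\big)$ is point-finite; Proposition~\ref{mufinitedominate} then forces $\big(\supp(\Phi(T_j)-\Phi(T_\infty))\big)$ to be point-finite as well, with all supports still inside $\GRID^{\ebmu,\bm}$, so $\Phi(T_j)\to\Phi(T_\infty)$. But $\Phi(T_j)=T_{j+1}\to T_\infty$ too, and limits in the asymptotic topology relative to a fixed grid are unique, so $\Phi(T_\infty)=T_\infty$. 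For uniqueness: if $S_1\ne S_2$ were both fixed points in $\A$, then $(S_1-S_2)\fgt^{\ebmu}\big(\Phi(S_1)-\Phi(S_2)\big)=(S_1-S_2)$, so the nonempty set $\supp(S_1-S_2)$ $\bmu$-dominates itself; but then $\Max^{\ebmu}(\supp(S_1-S_2))$, which is nonempty, would have to be disjoint from itself by the proposition above on disjointness of $\bmu$-maximal sets --- a contradiction.

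For (i): the set $\T^{\ebmu,\bm}$ is nonempty and closed, the map $\Psi(T):=\Phi(T)+T_0$ sends it to itself, and $\Psi(S)-\Psi(T)=\Phi(S-T)$, so linearity and $\bmu$-contractivity of $\Phi$ give $(S-T)\fgt^{\ebmu}\big(\Psi(S)-\Psi(T)\big)$, i.e.\ $\Psi$ is $\bmu$-contractive; so (i) follows from (ii). Equivalently, one can exhibit the solution directly as $T=\sum_{j=0}^\infty\Phi^j(T_0)$: iterating $\bmu$-contractivity gives $\Phi^j(T_0)\fgt^{\ebmu}\Phi^{j+1}(T_0)$, so Proposition~\ref{mudominateprop} shows the family is point-finite and the series converges in $\T^{\ebmu,\bm}$; a reindexing, together with the fact that a linear $\bmu$-contractive map is automatically continuous on point-finite sums (again by Proposition~\ref{mufinitedominate}), yields $\Phi(T)+T_0=T$. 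The main obstacle I expect is not any single inequality but the bookkeeping that keeps the whole iteration inside one fixed grid $\GRID^{\ebmu,\bm}$ with point-finite supports; this is exactly what $\bmu$-contractivity buys over mere $\fgt$-contractivity --- as the Example preceding the theorem shows --- and it is what allows Propositions~\ref{mudominateprop} and~\ref{mufinitedominate} to carry the argument.
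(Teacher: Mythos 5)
Your proof is correct and follows essentially the same route as the paper's: reduce (i) to (ii) via $\Psi(T)=\Phi(T)+T_0$ and $\Psi(S)-\Psi(T)=\Phi(S-T)$, iterate $T_{j+1}=\Phi(T_j)$, use $\bmu$-contractivity to get the chain $\supp(T_j-T_{j+1})\fgt^{\ebmu}\supp(T_{j+1}-T_{j+2})$, invoke Proposition~\ref{mudominateprop} for point-finiteness (hence convergence via the nonarchimedean Cauchy criterion), then identify the limit as a fixed point via $\bmu$-continuity (Proposition~\ref{mufinitedominate}) and conclude uniqueness by contradiction. You fill in two steps the paper compresses (the continuity argument showing $\Phi(T_\infty)=T_\infty$, and the self-domination contradiction for uniqueness) and add a Neumann-series restatement of (i); these are useful elaborations but not a different argument. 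One tiny misattribution: $T_j-T_\infty\to 0$ is just the definition of $T_j\to T_\infty$ (point-finiteness of $\supp(T_j-T_\infty)$), not a consequence of Proposition~\ref{Emultcontin}.
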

\begin{proof}
(i) follows from (ii), since if $\Phi$ is linear and $\bmu$-contractive,
then $\widetilde{\Phi}$ defined by $\widetilde{\Phi}(T) = \Phi(T)+T_0$
is $\bmu$-contractive.

(ii) First note $\Phi$ is $\bmu$-continuous:  Assume
$T_j \muto T$.
Then $T_j - T \muto 0$, so $(\supp(T_j - T))$
is point-finite.  But $\supp(T_j - T) \fgt^{\ebmu} \supp(\Phi(T_j)-\Phi(T))$,
so $(\supp(\Phi(T_j)-\Phi(T))$ is also
point-finite by Proposition~\ref{mufinitedominate}.
And so $\Phi(T_j) \muto \Phi(T)$.

Existence:  Define $T_{j+1} = \Phi(T_j)$.  We claim $T_j$
is $\bmu$-convergent.  The sequence $\AA_j = \supp(T_j - T_{j+1})$
satisfies: $\AA_j \fgt^{\ebmu} \AA_{j+1}$ for all $j$,
so (Proposition~\ref{mudominateprop})
$(\AA_j)$ is point-finite, which
means $T_j-T_{j+1} \muto 0$
and therefore (by nonarchimedean Cauchy)
$T_j$ $\bmu$-converges.  Difference preserves
$\bmu$-limits, so the limit $T$ satisfies $\Phi(T)=T$.

Uniqueness: if $A$ and $B$ were two different
solutions, then $\Phi(A)-\Phi(B) = A - B$, which contradicts
$\bmu$-contractivity.
\end{proof}

\begin{re}
The $\bmu$-dominance relation may be used to explain two of the earlier
results that may have seemed poorly motivated at the time.

(a) To prove the existence of the derivative: When $\g'$ had
been defined for $\g \in \GRID^{\ebmu,\bm}$, we then showed
(Proposition~\ref{Ederivexist})
that the set $\SET{\g'}{\g \in \GRID^{\ebmu,\bm}}$
is point-finite.  We could first show: given $\bmu$ there
exists $\widetilde{\bmu}$ such that if $\m \fst^{\ebmu} \n$,
$\n \ne 1$,
then $\m' \fst^{\tilde{\ebmu}} \n'$.  Then:
Given any $\n \in \G$, we claim that the set
$\AA = \SET{\g \in \GRID^{\ebmu,\bm}}{\n \in \supp \g'}$
is finite.  If not, by Proposition~\ref{mu-sequence} there is an infinite
sequence $\g_j \in \AA$, $\g_j \ne 1$, with $\g_0 \fgt^{\ebmu} \g_1 \fgt^{\ebmu} \cdots$.
But then $\g'_0 \fgt^{\tilde{\ebmu}} \g'_1 \fgt^{\tilde{\ebmu}} \cdots$,
so $\SET{\supp \g'_j}{j \in \N}$ is point-finite by
Proposition~\ref{mudominateprop}, contradicting
the assumption that $\AA$ is infinite.

(b) To prove the existence of the composition $T \circ S$: When
$\g \circ S$ had been defined for $\g \in \GRID^{\ebmu,\bm}$, we then
showed (Proposition~\ref{Ecompexist})
that the set $\SET{\g\circ S}{\g \in \GRID^{\ebmu,\bm}}$
is point-finite.  We could first show: given $\bmu$ and $S$,
there exists $\widetilde{\bmu}$ such that if $\m \fgt^{\ebmu} \n$,
then $\m \circ S \fgt^{\tilde{\ebmu}} \n \circ S$.  Then:
Given any $\n \in \G$, we claim that the set
$\AA = \SET{\g \in \GRID^{\ebmu,\bm}}{\n \in \supp(\g \circ S)}$
is finite.  If not, by Proposition~\ref{mu-sequence} there is an infinite
sequence $\g_j \in \AA$ with $\g_0 \fgt^{\ebmu} \g_1 \fgt^{\ebmu} \cdots$.
But then $\g_0\circ S \fgt^{\tilde{\ebmu}} \g_1\circ S
\fgt^{\tilde{\ebmu}} \cdots$,
so $\SET{\supp(\g_j\circ S)}{j \in \N}$ is point-finite by
Proposition~\ref{mudominateprop}, contradicting
the assumption that $\AA$ is infinite.
\end{re}

\section*{Integration}
In elementary calculus courses, we find that certain integrals
can be evaluated using reduction formulas.  For example
$\int x^n e^x\,dx$, when integrated by parts, yelds an integral
of the same form, but with exponent $n$ reduced by $1$.
So if we repeat this until the exponent is zero, we have
our integral.  But of course this does not work
when the exponent is not in $\N$.  We can try it, and
get an infinite series:
{\allowdisplaybreaks
\begin{align*}
	\int x^a e^x\,dx &= 
	x^a e^x - a \int x^{a-1} e^x\,dx
	\\&= x^a e^x - a x^{n-1} e^x + a(a-1)\int x^{a-2}e^x\,dx
	= \cdots
	\\&= 
	\sum_{j=0}^\infty (-1)^ja(a-1)(a-2)\cdots(a-j+1)\; x^{a-j}e^x
	\\&= 
	\sum_{j=0}^\infty \frac{\Gamma(j-a)}{\Gamma(-a)}\,x^{a-j} e^x ,
\end{align*}
}%
but this series converges for no $x$. However, it is still
a transseries solution to the problem.

\begin{pr}\label{int_xaebec}
Let $a,b,c \in \R$, $c>0, b\ne 0$.  Then the transseries
\begin{equation*}
	T = \sum_{j=1}^\infty
	\frac{\Gamma\left(j-\frac{a+1}{c}\right)}{\Gamma\left(1-
	\frac{a+1}{c}\right)cb^j}\,
	x^{a+1-jc} e^{bx^c}
\end{equation*}
has derivative $T' = x^a e^{bx^c}$.  {\rm (}If $(a+1)/c$ is
a positive integer, then $T$ should be a finite sum.{\rm )}
\end{pr}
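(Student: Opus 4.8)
The plan is a direct term-by-term differentiation followed by collection of like monomials, with the functional equation of $\Gamma$ doing the real work. First I would record that $T$ is genuinely grid-based: writing $\alpha=(a+1)/c$, each monomial $x^{a+1-jc}e^{bx^{c}}$ equals $x^{a+1}e^{bx^{c}}\cdot(x^{-c})^{j}$, so $\supp T$ lies in $\{x^{a+1}e^{bx^{c}}\}\cdot\GRID^{\{x^{-c}\},(1)}$, a subgrid since $x^{-c}\fst 1$ (because $c>0$) and since the class $\W$ of subgrids is closed under multiplication by a fixed monomial. Hence $T\in\R\lbbb x\rbbb$, its derivative is defined, and by the termwise-differentiation clause of Proposition~\ref{Ederivexist} applied to the convergent series $T=\sum_{j\ge 1}d_j\,x^{a+1-jc}e^{bx^{c}}$, where $d_j=\Gamma(j-\alpha)/\big(\Gamma(1-\alpha)\,c\,b^{j}\big)$, we may compute $T'$ by differentiating each term and recollecting.

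Next I would differentiate a single monomial via $(x^{\beta}e^{L})'=(\beta x^{-1}+L')x^{\beta}e^{L}$ with $\beta=a+1-jc$, $L=bx^{c}$, $L'=bcx^{c-1}$:
\begin{equation*}
\big(x^{a+1-jc}e^{bx^{c}}\big)'=(a+1-jc)\,x^{a-jc}e^{bx^{c}}+bc\,x^{a-(j-1)c}e^{bx^{c}} .
\end{equation*}
Summing against the $d_j$ and collecting the coefficient of $x^{a-kc}e^{bx^{c}}$: for $k=0$ only the $bc$-term with $j=1$ contributes, giving $bc\,d_1$; for $k\ge 1$ the first term with $j=k$ and the $bc$-term with $j=k+1$ contribute, giving $(a+1-kc)d_k+bc\,d_{k+1}$. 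Each level receives only finitely many contributions (here $c\ne 0$), so the recollection is legitimate, and matching with $x^{a}e^{bx^{c}}$ reduces the proposition to the identities $bc\,d_1=1$ and $(a+1-kc)d_k+bc\,d_{k+1}=0$ for all $k\ge 1$.

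The first identity is immediate since $d_1=1/(cb)$. For the second, write $a+1-kc=c(\alpha-k)$, so the claim becomes $b\,d_{k+1}=(k-\alpha)d_k$; as $d_{k+1}/d_k=\Gamma(k+1-\alpha)/\big(b\,\Gamma(k-\alpha)\big)$, this is exactly $\Gamma(z+1)=z\,\Gamma(z)$ at $z=k-\alpha$. The one place needing care is the degenerate case flagged in the statement: when $\alpha$ is a positive integer $m$ the factor $\Gamma(1-\alpha)$ has a pole, so I would replace the quotient of Gammas throughout by the everywhere-defined product $e_j:=\prod_{i=1}^{j-1}(i-\alpha)$ (empty product $1$), note $d_j=e_j/(c\,b^{j})$, observe that $e_j=0$ for $j\ge m+1$ so that $T$ is indeed the asserted finite sum, and use the recursion $e_{j+1}=(j-\alpha)e_j$, valid for every real $\alpha$ without exception. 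I do not expect a genuine obstacle here; the only things to stay vigilant about are the index shift when collecting like terms and this uniform treatment of the integer case.
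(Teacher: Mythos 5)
Your proof is correct. One thing to note: the paper does not actually supply a formal proof of this proposition---it only \emph{motivates} the formula by repeatedly integrating $\int x^a e^x\,dx$ by parts in the special case $b=c=1$ and then states the general result. Your direct verification---differentiate the $j$-th monomial via $(x^{\beta}e^{L})'=(\beta x^{-1}+L')x^{\beta}e^{L}$, regroup by the exponent $a-kc$, and reduce to $bc\,d_1=1$ together with $(a+1-kc)d_k+bc\,d_{k+1}=0$, which the functional equation $\Gamma(z+1)=z\Gamma(z)$ delivers at $z=k-\alpha$---is the same telescoping run in the opposite direction, and it constitutes a genuine proof. The two pieces of scaffolding you add are both worth having and are not spelled out in the paper: (i) the observation that $\supp T$ sits in $\{x^{a+1}e^{bx^{c}}\}\cdot\GRID^{\{x^{-c}\},(1)}$, hence in a subgrid (via closure of $\W$ under products), which legitimizes termwise differentiation through Proposition~\ref{Ederivexist}(ii); and (ii) the replacement of $\Gamma(j-\alpha)/\Gamma(1-\alpha)$ by the Pochhammer-type product $e_j=\prod_{i=1}^{j-1}(i-\alpha)$, which handles the pole at $\alpha\in\Z_{>0}$ uniformly, makes the recursion $e_{j+1}=(j-\alpha)e_j$ unconditional, and explains the parenthetical ``finite sum'' remark. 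In short: what the paper gives as a plausibility argument, you have turned into a complete verification, and in doing so you cover the general $(a,b,c)$ and the degenerate integer case that the integration-by-parts sketch does not address.
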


\begin{problm}
More generally: if $b \in \R$
and $L \in \T_0$ is purely large,
can you use the same method to show that there is
$T \in \T_1$ with $T' = x^be^L$?
\end{problm}

\subsection*{The General Integral}
Every transseries in $\R\lbbb x \rbbb$
has an integral (an antiderivative).
We will give a complete proof following the
hint in \cite[4.10e.1]{costinasymptotics}.
This is an example where we convert the problem
to a log-free case to apply the contraction argument.
The general integration problem (Theorem~\ref{Eintegral}) is
reduced to one (Proposition~\ref{intlarge}) where contraction
can be easily applied.  

\begin{pr}\label{intlarge}
Let $T \in \T_{\bullet}$ with $T \fgt 1$.  Then
there is $S \in \T_{\bullet}$ with $S' = e^T$.
\end{pr}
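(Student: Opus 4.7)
I would seek $S$ in the multiplicative form $S = U e^{T}$ for some unknown $U \in \T_\bullet$. Differentiating gives $S' = (U' + T'U)e^{T}$, so the equation $S' = e^{T}$ becomes the first-order linear ODE
\begin{equation*}
  U' + T' U = 1,
\end{equation*}
which I rewrite as the fixed-point problem $U = \Phi(U)$ with
\begin{equation*}
  \Phi(U) \;:=\; \frac{1}{T'} \;-\; \frac{U'}{T'}.
\end{equation*}
The strategy is then to apply the Grid-Based Fixed-Point Theorem~\ref{costinfixed}(i) to this affine map.

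To set it up, I would fix a hereditary ratio set $\bmu_{0}$ with $T \in \T^{\bmu_{0}}$, and then enlarge it to a ratio set $\bmu$ by adjoining, in succession, the derivative addendum of $T$ (Proposition~\ref{Ederivexist}) and the inversion addendum of $T'$ (Proposition~\ref{addenduminverse}). By Proposition~\ref{addendumheight} each of these stays inside $\Gsmall_{\bullet}$, so no logarithms are introduced and we remain in $\T_{\bullet}$. After this enlargement $1/T' \in \T^{\bmu,\bm}$ for some $\bm$, and the linear operator $\Psi(U) := -U'/T'$ sends $\T^{\bmu,\bm}$ into itself.

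The main obstacle, and the one I would spend the most care on, is verifying that $\Psi$ is genuinely $\bmu$-contractive. Concretely, I must show that for every monomial $\g \in \GRID^{\bmu,\bm}$,
\begin{equation*}
  \mag(\g'/T') \;\fst^{\bmu}\; \g.
\end{equation*}
Writing $\g = x^{a}e^{L}$ with $L$ purely large and $L \in \T^{\bmu_{0}}$ (using heredity), one has $\g'/T' = (ax^{-1} + L')\,\g/T'$, so the issue is showing $(ax^{-1}+L')/T' \fst^{\bmu} 1$. The key inputs are Proposition~\ref{EWKB}(iv)--(vi): from $T \fgt 1$ we get $T^{2} \fgt T'$, $(T')^{2} \fgt T''$, and $xT' \fgt 1$. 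These say that $T'$ is ``large enough'' that dividing by it strictly reduces $x^{-1}$ and also each logarithmic-derivative contribution $L'$ coming from the finitely many generators of $\bmu_{0}$. If necessary I would adjoin to $\bmu$ the (finitely many) ratios that make the quotients $x^{-1}/\mag(T')$ and $(L_{i}')/T'$ manifestly $\bmu$-small, for each $L_{i}$ in a generator of $\bmu_{0}$. This is the step where the careful bookkeeping is required; it works because $\bmu_{0}$ is finite and everything is computed inside the grid produced by these ratios.

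Once contractivity is in hand, Theorem~\ref{costinfixed}(i) supplies a unique $U \in \T^{\bmu,\bm}$ with $U = \Phi(U)$. Setting $S := U e^{T}$ (with $e^{T}$ defined and in $\T_{\bullet}$ because $T$ has nonzero purely large part, after possibly adjoining an exponential ratio), we get $S' = (U' + T' U)\,e^{T} = e^{T}$, as required.
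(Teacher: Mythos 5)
Your proof is correct and uses the same core machinery as the paper---reduce to an affine fixed-point problem and invoke Theorem~\ref{costinfixed}(i)---but your ansatz is a genuine variant. The paper sets $S=\frac{e^{T}}{T'}(1+U)$ with $U$ small, which leads to the two-term contraction
$\Phi(U)=\frac{T''}{(T')^{2}}U-\frac{U'}{T'}$
with affine shift $T''/(T')^{2}$; you instead set $S=Ue^{T}$, giving the linear ODE $U'+T'U=1$ and the one-term contraction $\Psi(U)=-U'/T'$ with affine shift $1/T'$. The payoff of your version is that there is only one term to check for $\bmu$-contractivity, and you do not need the smallness addendum for $T''/(T')^{2}$ or (as you in fact cite but never use) parts (iv) and (v) of Proposition~\ref{EWKB}---only (vi), which gives $1/(xT')\fst 1$, together with the height argument for $L_{i}'/T'\fst 1$. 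The cost is that your $U$ is not small: $\mag U = \mag(1/T')$, so the iteration lives in $\T^{\ebmu,\bm}$ with a possibly nonzero base point $\bm$, whereas the paper works in $\T^{\ebmu,\0}$. That is harmless since $\GRID^{\ebmu,\0}\cdot\GRID^{\ebmu,\bm}\subseteq\GRID^{\ebmu,\bm}$, so $\Psi$ still maps the grid into itself, and contractivity is monomial-by-monomial as you argue. In short: same proof strategy, cleaner algebra on your side.
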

\begin{proof}
Either $T$ is positive or negative.  We will do the positive case,
the negative one is similar (and it turns out the iterative
formulas are the same).  If
\begin{equation*}
	S = \frac{e^T}{T'}\,(1 + U),
\end{equation*}
where $U$ satisfies
\begin{equation*}
	U = \frac{T''}{(T')^2} +\frac{T''}{(T')^2}U
	-\frac{U'}{T'} ,
\end{equation*}
then it is a computation to see that $S' = e^T$.
So it suffices to exhibit an appropriate $\bmu$ and
show that the linear map $\Phi \takes \T^{\ebmu,\0} \to \T^{\ebmu,\0}$
defined by
\begin{equation*}
	\Phi(U) = \frac{T''}{(T')^2}U - \frac{U'}{T'}
\end{equation*}
is $\bmu$-contractive, then apply Theorem~\ref{costinfixed}(i).

Say $T$ is of exact height $N$, so $e^T$ is of exact
height $N+1$.  By Proposition~\ref{EWKB}, $T'' \fst (T')^2$ and $xT' \fgt 1$.
So $T''/(T')^2$ and $1/(xT')$ are small.  Let
$\tbmu$ be the least set of ratios
including $x^{-1}$, ratios generating
$T$, the inversion addendum for $T'$, the smallness addenda
for $T''/(T')^2$ and $1/(xT')$, and is hereditary.
Then, for each $\mu_i = x^{-b_i} e^{-L_i}$ in $\tbmu$
(finitely many of them), since $L_i$ has lower height than
$T'$, we have $L'_i/T' \fst 1$.  Add smallness addenda
for all of these, call the result $\bmu$.
Note $\lsupp \bmu = \lsupp \tbmu$, so we
don't have to repeat this last step.

By Proposition~\ref{addendumheight} all ratios
in $\bmu$ are (at most) of height $N$.
And all derivatives $T', T''$ belong to $\T^{\ebmu}$.
The function $\Phi$ maps $\T^{\ebmu}$ into itself.

Since $\Phi$ is linear, we just have to check for
monomials $\g \in \GRID^{\ebmu,\0}$ that
$\g \fgt^{\ebmu} \Phi(\g)$.
Now $T''/(T')^2$ is $\bmu$-small
so $\g \fgt^{\ebmu} (T''/(T')^2) \g$.  For the
second term:   If $\g = \bmu^\bk = x^b e^L$,
then
\begin{equation*}
	\frac{\g'}{T'} = \frac{bx^{b-1}e^L+L'x^b e^L}{T'} =
	\frac{bx^{-1}+L'}{T'}\,\g =
	\frac{b}{xT'}\,\g + \frac{L'}{T'}\,\g.
\end{equation*}
But $1/xT' \fst^{\ebmu} 1$ so $\g \fgt^{\ebmu} (b/(xT'))\g$.
And $L'/T' \fst^{\ebmu} 1$ so
$\g \fgt^{\ebmu} (L'/T')\g$.
\end{proof}

\begin{de}
We say $x^be^L \in \G_{\bullet}$ is \Def{power-free} iff $b=0$.
We say $T \in \T_{\bullet}$ is power-free iff all transmonomials
in $\supp T$ are power-free.
\end{de}

Since $(x^b e^L) \circ \exp = e^{bx} e^{L \circ \exp}$,
it follows that all $T \in \T_{\bullet,-1}$ are power-free.

\begin{pr}\label{intsum}
Let $T \in \T_{\bullet}$ be a power-free transseries.  Then there is
$S \in \T_{\bullet}$ with $S'=T$.
\end{pr}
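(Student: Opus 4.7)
The plan is to integrate $T$ term by term using Proposition~\ref{intlarge}, then assemble the individual antiderivatives into a single grid-based transseries. Since $T$ is power-free, every $\g \in \supp T$ has the form $e^{L_\g}$ with $L_\g$ purely large and log-free. Split $T = c_0 \cdot 1 + T_{\neq 1}$ by isolating the constant coefficient $c_0 \in \R$ (coming from those terms with $L_\g = 0$); the constant has the obvious antiderivative $c_0 x \in \T_\bullet$.

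For each monomial $\g = e^{L_\g}$ in $\supp T_{\neq 1}$, the exponent $L_\g$ is non-zero and purely large, so Proposition~\ref{intlarge} applied to $L_\g$ produces $S_\g \in \T_\bullet$ of the form $S_\g = (e^{L_\g}/L_\g')(1 + U_\g)$ satisfying $S_\g' = \g$, where $U_\g$ is small and constructed by the contractive fixed-point argument in that proof.

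The hard part will be to show that the formal combination $S := c_0 x + \sum_{\g} c_\g S_\g$ converges in the asymptotic topology to an element of $\T_\bullet$. Starting from a hereditary ratio set $\bmu$ with $T \in \T^{\ebmu, \bm}$, write $\mu_i = x^{-b_i} e^{-L_i}$; for each $\g = \bmu^\bk \in \supp T$, power-freeness gives $L_\g = -\sum k_i L_i$ (the $x$-exponents cancel) and $L_\g' = -\sum k_i L_i'$, both living in a fixed grid determined by $\bmu$ and its derivative addendum. A single enlarged ratio set $\tbmu$ that hosts every $S_\g$ can therefore be built once and for all, by adjoining $x^{-1}$, the derivative addendum, and the inversion and smallness addenda required by Proposition~\ref{intlarge}---the key is that the relevant quantities $L_\g'$, $L_\g''/(L_\g')^2$, and $1/(xL_\g')$ have leading behavior drawn from a finite pool as $\bk$ varies, so only finitely many new ratios are needed. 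Point-finiteness of the family $(\supp S_\g)_{\g \in \supp T_{\neq 1}}$ then reduces, via Propositions~\ref{mudominateprop} and~\ref{mufinitedominate}, to the trivial point-finiteness of the singleton family $(\{\g\})_{\g \in \supp T}$, since the $\bmu$-dominance structure between distinct $\g$'s is preserved (in $\tbmu$) on passing to the $S_\g$'s: the leading exponential factor $e^{L_\g - L_{\g'}}$ dominates all polynomial corrections from $1/L_\g'$ and $1+U_\g$.

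Once $S \in \T_\bullet$ is established, Proposition~\ref{Ederivexist}(ii) allows termwise differentiation of the convergent sum, yielding $S' = c_0 + \sum_\g c_\g \g = T$, as required.
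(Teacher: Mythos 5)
Your overall plan---integrate term by term with Proposition~\ref{intlarge} and assemble by showing the family $(\supp S_\g)_{\g}$ is point-finite---matches the paper's. But the crucial point-finiteness step has a gap, and the specific reduction you invoke does not go through.

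You want to conclude point-finiteness of $(\supp S_\g)$ from point-finiteness of the singleton family $(\{\g\})$ via Proposition~\ref{mufinitedominate}. That proposition requires $\{\g\} \fgt^{\tebmu} \supp S_\g$, i.e.\ that every monomial of $S_\g$ be $\tebmu$-dominated by $\g$ itself. This is false in general: from Proposition~\ref{intlarge}, $S_\g = (e^{L_\g}/L_\g')(1+U_\g)$, so $\mag S_\g = e^{L_\g}/\mag(L_\g')$, and whenever $L_\g' \fst 1$ (e.g.\ $L_\g = x^{1/2}$) the magnitude of $S_\g$ is strictly \emph{larger} than $\g = e^{L_\g}$. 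So the singleton $\{\g\}$ does not dominate $\supp S_\g$, and Proposition~\ref{mufinitedominate} cannot be applied in the way you describe. Your fallback remark---that the $\bmu$-dominance structure among \emph{distinct} $\g$'s is preserved by $\g \mapsto S_\g$---is a genuinely different and much stronger claim (it would have to be combined with Propositions~\ref{mu-sequence} and~\ref{mudominateprop} in a proof by contradiction), and you have not shown it; in particular passing from an ordinary estimate ``$e^{L_\g - L_{\g'}}$ beats polynomial corrections'' to a \emph{$\tebmu$}-dominance estimate is exactly the kind of step the whole ratio-set machinery exists to police, and it is not automatic.

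The paper avoids this entirely with one small trick: instead of comparing $\g$ to $S_\g$, compare $\g$ to $S_\g/x$. Since
\begin{equation*}
	\frac{S_\g}{x} = \frac{e^{L_\g}}{xL_\g'}\,(1+U_\g)
\end{equation*}
and $xL_\g' \fgt 1$ by Proposition~\ref{EWKB}(vi), the factor $(1/(xL_\g'))(1+U_\g)$ is small, so (after adjoining the appropriate smallness addenda to form $\tebmu$) we do get $\g \fgt^{\tebmu} \supp(S_\g/x)$. Then $(\supp(S_\g/x))$ is point-finite by Proposition~\ref{mufinitedominate}, and multiplying by the fixed monomial $x$ shifts every support uniformly, so $(\supp S_\g)$ is point-finite as well. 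Your proof needs this divide-by-$x$ step (or something equivalent); as written the key dominance assertion is wrong.
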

\begin{proof}
For monomials $\g=e^L \in \supp T$ with large $L \in \T^{\ebmu}$,
write $\P(\g)$ for the
transseries constructed
in Proposition~\ref{intlarge} with $\P(\g)'=\g$.  Then we
must show that the family $(\supp \P(\g))$ is point-finite,
so we can define $\P\big(\sum c_\g \g\big) = \sum c_\g \P(\g)$.
For large $L$ we have $x L' \fgt 1$ (Proposition~\ref{EWKB}).
Thus, the formula
\begin{equation*}
	\frac{\P(e^L)}{x} = \frac{e^L}{xL'}(1+U)
\end{equation*}
shows that $e^L$ $\tbmu$-dominates $\P(e^L)/x$ for
an appropriate $\tbmu$.  So the family
$\supp (\P(e^L)/x)$ is point-finite and thus
the family $\supp \P(e^L)$ is point-finite.
\end{proof}

\begin{thm}\label{Eintegral}
Let $A \in \R\lbbb x \rbbb$.  Then there exists $B \in \R\lbbb x \rbbb$
with $B'=A$.
\end{thm}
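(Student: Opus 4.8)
The plan is to reduce the general statement to the power-free, log-free case already settled in Proposition~\ref{intsum}. This is done by a single change of variables: substituting a sufficiently high iterated exponential $\exp_{M+1}(x)$ for $x$ — which simultaneously removes all logarithms (the integrand becomes log-free) and all $x^{b}$ monomial factors (the integrand becomes power-free) — integrating there, and then undoing the substitution.

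First I would write the given $A \in \R\lbbb x \rbbb$ as $A = T \circ \log_M$ with $T \in \T_{\bullet}$, $M \in \N$, and set
\[
	P := \prod_{j=1}^{M+1}\exp_j(x) = e^{x}e^{e^{x}}\cdots\exp_{M+1}(x) \in \T_{\bullet}, \qquad
	\widehat A := \bigl(A \circ \exp_{M+1}\bigr)\cdot P .
\]
Since $A \circ \exp_{M+1} = T \circ \log_M \circ \exp_{M+1} = T \circ \exp$, which lies in $\T_{\bullet,-1}$, and $\T_{\bullet,-1}$ consists of power-free transseries, while each factor $\exp_j(x)$ with $j \ge 1$ is the power-free monomial $x^{0}e^{\exp_{j-1}(x)}$, and a product of power-free transseries is power-free, the series $\widehat A$ is a power-free element of $\T_{\bullet}$. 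Proposition~\ref{intsum} therefore provides $C \in \T_{\bullet}$ with $C' = \widehat A$. I then put $B := C \circ \log_{M+1} \in \T_{\bullet,M+1} \subseteq \R\lbbb x \rbbb$.

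It remains to check $B' = A$, a routine chain-rule computation. By the composition rule for the derivative and Remark~\ref{derivlogm},
\[
	B' = \bigl(C' \circ \log_{M+1}\bigr)\cdot\bigl(\log_{M+1}x\bigr)'
	   = \bigl(\widehat A \circ \log_{M+1}\bigr)\cdot\Bigl(\,\prod_{i=0}^{M}\log_i x\,\Bigr)^{-1}.
\]
Now $\widehat A \circ \log_{M+1} = \bigl((A\circ\exp_{M+1})\circ\log_{M+1}\bigr)\cdot\bigl(P\circ\log_{M+1}\bigr)$; from $\exp_{M+1}\circ\log_{M+1} = x$ one gets $(A\circ\exp_{M+1})\circ\log_{M+1} = A$, and from $\exp_j\circ\log_{M+1} = \log_{M+1-j}$ one gets $P\circ\log_{M+1} = \prod_{j=1}^{M+1}\log_{M+1-j}x = \prod_{i=0}^{M}\log_i x$. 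The two products cancel, leaving $B' = A$.

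Conceptually this substitution is the composite of two familiar ones — $x=\exp_M(u)$, which converts integrating $A = T\circ\log_M$ into integrating the log-free $T\cdot\prod_{j=1}^{M}\exp_j$, and $x=e^{u}$, which converts integrating a log-free series into integrating a power-free one — so one could equally present the argument in those two stages. Either way, no analytic content is added in this step: the real work lies entirely in Proposition~\ref{intlarge} (the contraction/fixed-point argument producing an antiderivative of $e^{T}$ for $T\fgt1$) and Proposition~\ref{intsum} (the point-finiteness bookkeeping that sums those antiderivatives over a grid). The only point that genuinely needs care here — and thus the main obstacle, modest as it is — is confirming that composing with $\exp$ really does eliminate the power factors and that power-freeness survives multiplication by $P$, so that Proposition~\ref{intsum} actually applies to $\widehat A$.
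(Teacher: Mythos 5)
Your proof is correct and follows essentially the same route as the paper: you write $A$ as a log-free transseries composed with an iterated logarithm, multiply by the product $\exp_1\cdots\exp_{M+1}$ to get a power-free integrand (your $\widehat A$ is exactly the paper's $T = T_1\cdot\exp_{M+1}\cdots\exp_1$ with $T_1 = A\circ\exp_{M+1}$), apply Proposition~\ref{intsum}, and compose back with $\log_{M+1}$. The only difference is that you spell out the chain-rule verification and the power-freeness check, which the paper leaves to the reader.
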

\begin{proof}
Say $A \in \T_{\bullet M}$.  Then $A = T_1 \circ \log_{M+1}$, where
$T_1 \in \T_{\bullet,-1}$.  Let
\begin{equation*}
	T = T_1 \cdot \exp_{M+1} \cdot \exp_M \cdots \exp_2 \cdot \exp_1 .
\end{equation*}
Now $T$ is power-free,
so by Proposition~\ref{intsum}, there exists $S \in \T_{\bullet}$
with $S' = T$.  Then let $B = S \circ \log_{M+1}$
and check that $B' = A$.  Note that $B \in \T_{\bullet,M+1}$.
\end{proof}

\subsection*{An Integral}
\begin{problm}\label{compint}
Compute the integral
\begin{equation*}
	\int e^{\displaystyle{e^{e^x}}}dx
\end{equation*}
using the method of Proposition~\ref{intlarge}.
\end{problm}

We first display the ratio set to be used, and derivatives:
\begin{align*}
&\mu_1 = x^{-1}, & &\mu_1' = -\mu_1^2, & &L_1 = \mu_1^{-1} = x,
\\
&\mu_2 = e^{-x} = e^{-L_1}, & &\mu_2' = -\mu_2, & & L_2 = \mu_2^{-1} = e^x,
\\
&\mu_3 = e^{-e^x} = e^{-L_2}, & &\mu_3' = -\mu_2^{-1}\mu_3,
& &L_3 = \mu_3^{-1} = e^{e^x},
\\
&\mu_4 = e^{-e^{e^x}} = e^{-L_3}, & &\mu_4' = -\mu_2^{-1}\mu_3^{-1}\mu_4,
& &L_4 = \mu_4^{-1} = e^{e^{e^x}}.
\end{align*}
The integral should have the form $(L_4/L_3')(1+U)$, where
$U$ satisfies
\begin{equation*}
	U = \frac{L_3''}{(L_3')^2} + \frac{L_3''}{(L_3')^2}U
	-\frac{U'}{L_3'}
	=
	(\mu_3 + \mu_2\mu_3) + (\mu_3 + \mu_2\mu_3)U - \mu_2\mu_3 U' .
\end{equation*}
To solve this, we should iterate
$U_{n+1} = (\mu_3 + \mu_2\mu_3) + \Phi(U_n)$ where
$\Phi(Y) = (\mu_3 + \mu_2\mu_3)Y - \mu_2\mu_3 Y'$.  Starting with
$U_0 = 0$, we get
\begin{align*}
	U_1 &= 
	(1+\mu_2)\mu_3,
	\\
	U_2 &= 
	(1+\mu_2)\mu_3+(2+3\mu_2+2\mu_2^2)\mu_3^2,
	\\
	U_3 &= 
	(1+\mu_2)\mu_3+(2+3\mu_2+2\mu_2^2)\mu_3^2+
	(6 + 11\mu_2 + 12\mu_2^2+6\mu_2^3)\mu_3^3,
\end{align*}
each step producing one higher power of $\mu_3$ and
preserving all of the existing terms.  Once we have
the limit $U$,
we add $1$ and multiply by $L_4/L_3' = e^{e^{e^x}}/(e^xe^{e^x})$.
The result is
\begin{equation*}
	\int e^{e^{e^x}}\,dx = e^{e^{e^x}}
	\sum_{j=1}^\infty e^{-j e^x}\left(\sum_{k=1}^j e^{-kx} c_{j,k}\right) .
\end{equation*}
The coefficients $c_{j,k}$ (namely, $1; 1,1; 2,3,2; 6,11,12,6;\cdots$)
are related to Stirling numbers of the
second kind.

Similarly, we may compute
\begin{equation*}
	\int e^{k_2 x} e^{k_3 e^x}e^{k_4 e^{e^x}}\,dx =
	e^{k_2 x} e^{k_3 e^x}e^{k_4 e^{e^x}}
	\sum_{j=1}^\infty e^{-j e^x}\left(\sum_{k=1}^j e^{-kx} c_{j,k}\right) .
\end{equation*}
for some coefficients $c_{j,k}$ depending on $k_2, k_3, k_4$.

\subsection*{A Differential Equation}
\begin{problm}
Solve the Riccati equation
\begin{equation*}
	Y' = \frac{x-x^2}{x^2-x+1}\,Y + Y^2 .
\tag*{($*$)}
\end{equation*}
\end{problm}

This is a differential equation where the solution
can be written in closed form.  (At least if you consider
an integral to be closed form.)  But it
will illustrate some things to watch out for when computing
transseries solutions.  The same things can happen in
cases where solutions are not known in closed form.

If we are not careful, we may come up with a series
\begin{equation*}
	S(x) = e^{-x}\left(1-\frac{1}{x}+\frac{1}{3x^3}
	+\frac{1}{6x^4}-\frac{1}{10x^5}-\frac{8}{45x^6}
	-\frac{1}{18x^7}+\frac{11}{120x^8}
	+\cdots\right)
\end{equation*}
and claim it is a solution.  If we plug this
series in for $Y$, then the two sides of the differential
equation agree to all orders.  That is, if we
compute $S$ up to $\O(e^{-x}x^{-1000})$, and plug it in,
then the two sides agree up to $\O(e^{-x}x^{-1000})$.
But in fact, $S$ is not the transseries solution of ($*$).
The two sides are not equal---their difference is just far smaller
than all terms of the series $S$.  The difference
has order $e^{-2x}$.  In hindsight, this should be clear,
because of the $Y^2$ term in~($*$).  If
$Y$ has a term $e^{-x}$ in its expansion, then
$Y^2$ will have a term $e^{-2x}$.  When $S$
is substituted into ($*$), the term $e^{-2x}$
appears on the right side but not the left.

In fact, $S(x)$ is a solution of ($*$) without
the $Y^2$ term.

According to Maple, the actual solution is
$Y = c S(x)/\big(1-c \int S(x)\,dx\big)$, where
$c$ is an arbitrary constant and
\begin{equation*}
	S(x) = \exp\left[-x+\frac{2}{\sqrt{3}}\arctan
	\left(\frac{2x-1}{\sqrt{3}}\right)\right] .
\end{equation*}
The exponent in this $S(x)$ is $-x$ plus constant plus small,
so $S$ can be written as a series.
It is (except for the constant factor) the
series $S(x)$ given above.

Now the integral of $S$ can be done (using Proposition~\ref{int_xaebec}),
then division carried out as usual.  The general solution of ($*$) is:
{\allowdisplaybreaks
\begin{align*}
	& c e^{-x} \left(1-\frac{1}{x}+\frac{1}{3x^3}+\frac{1}{6x^4}
	-\frac{1}{10x^5}-\frac{8}{45x^6}+\cdots\right)
	\\ +\; & c^2 e^{-2x} \left(-1+\frac{2}{x}
	-\frac{-2}{x^2}+\frac{7}{3x^3}-\frac{20}{3x^4}
	+\frac{388}{15x^5}-\frac{5578}{45x^6}+\cdots\right)
	\\ +\; & c^3 e^{-3x} \left(1-\frac{3}{x}+\frac{5}{x^2}
	-\frac{8}{x^3}+\frac{39}{2x^4}-\frac{693}{10x^5}
	+\frac{3159}{10x^6}+\cdots\right)
	\\ +\; & c^4 e^{-4x} \left(-1+\frac{4}{x}-\frac{9}{x^2}
	+\frac{53}{3x^3}-\frac{128}{3x^4}+\frac{707}{5x^5}
	-\frac{27442}{45x^6}+\cdots\right)
	\\ +\; & c^5 e^{-5x} \left(1-\frac{5}{x}+\frac{14}{x^2}
	-\frac{97}{3x^3}+\frac{487}{6x^4}-\frac{1549}{6x^5}
	+\frac{9509}{9x^6}+\cdots\right)
	\\ +\; & c^6 e^{-6x} \left(-1+\frac{6}{x}
	-\frac{20}{x^2}+\frac{53}{x^3}-\frac{141}{x^4}+\frac{2208}{5x^5}
	-\frac{8648}{5x^6}+\cdots\right)
	\\ +\; & \cdots
\end{align*}
}%
Transseries solutions to simple problems can have
support of transfinite order type!

The transseries solution to differential equation ($*$) can be found
without using a known closed form.  The generic method would
reduce to height zero (by taking logarithms of the
unknown $Y$) then solve as a contractive map.

There is another comment on doing these computations
with a computer algebra system.  Carrying out the
division indicated above, for example, is not
trivial.  If I write the two series to many terms,
divide, then tell Maple to write it as a series
(using the MultiSeries package, \texttt{series(A/B,x=infinity,15)}),
I get only the first row of the result above.  Admitedly,
there is a big-O term at the end, and all
terms in the subsequent rows are far smaller than
that, but it is not what we want here.

We want to discard not terms that are merely small, but
terms that are $\bmu$-small for a relevant $\bmu$.
So this computation can better be done using a grid.
Choose a finite ratio set---in this case
I used $\mu_1 = x^{-1}$, $\mu_2 = e^{-x}$.
We write the two series in terms of these ratios, then
expand the quotient as a series in the
two variables $\mu_1, \mu_2$.
Now we can control which terms
are kept.  Delete terms not merely when
they are small, but when they are $\bmu$-small.
The series above has all terms $\bmu^\bk$ with
$\bk \le (6,6)$.

I used this same grid method for the computations
in Problem~\ref{compint}.  That is the reason
I started there by displaying the required ratio set
and derivatives.

\subsection*{Factoring}
\begin{problm}
Factor the differential operator
$$
	\partial^2 + x\partial + I =
	\big(\partial - \alpha(x)I\big) \big(\partial - \beta(x)I\big) ,
$$
where $\alpha(x)$ and $\beta(x)$ are transseries.
\end{problm}

Why don't you do it?  My answer looks like this:
\begin{align*}
	\alpha(x) &=  -x +\frac{1}{x}+\frac{2}{x^3}+\frac{10}{x^5}
	+\frac{74}{x^7}+\frac{706}{x^9}+\frac{8162}{x^{11}}
	+\frac{110410}{x^{13}}+\O(x^{-15}),
	\\
	\beta(x) &=  -\frac{1}{x}-\frac{2}{x^3}-\frac{10}{x^5}
	-\frac{74}{x^7}-\frac{706}{x^9}-\frac{8162}{x^{11}}
	-\frac{110410}{x^{13}}+\O(x^{-15}).
\end{align*}
Are the coefficients Sloane A000698 \cite{sloane}?
I am told that these series are divergent, and that can be
proved by considering ``Stokes directions'' in
the complex plane---another interesting
topic beyond the scope of this paper.  Elementary functions
have convergent transseries \cite[Cor.~5.5]{DMM1}, so
$\alpha(x)$ and $\beta(x)$ (even the genuine functions
obtained by \'Ecalle--Borel summation)
are \emph{not} elementary functions.

\subsection*{Increasing and Decreasing}
I have (as part of the sales pitch) tried to show that the reasoning
required for the theory of transseries is easy, although
perhaps sometimes tedious.  But, in fact, I think there
are situations---dealing with composition---that are not as easy.

\begin{problm}
Let $T, A, B \in \T$.  Assume $A, B$ are large and positive.
Prove or disprove: if $T' > 0$ and $A < B$, then $T\circ A < T\circ B$.
\end{problm}

\section{Additional Remarks}\label{addremarks}
If (as I claim) the system $\R\lbbb x \rbbb$ of
transseries is an elementary and fundamental
object, then perhaps it is only natural that there are
variants in the formulation and definitions used.
For example \cite{hoeven} the construction can proceed by
first adding logarithms, and then adding exponentials.
For an exercise, see if you can carry that out
yourself in such a way that the end result is the same
system of transseries as constructed above.
I prefer the approach shown here, since I view the
``log-free'' calculations as fundamental.

There is a possibility \cite{asch, edgarc, hoeventhesis, kuhlmann, schmeling}
to allow \emph{well ordered supports}
instead of just the grids $\GRID^{\ebmu,\bm}$.
These are called \Def{well-based transseries}.
(Perhaps we use the alternate notation $\R[[\MM]]$ for the well-based
Hahn field and the new notation $\R\lbb\MM\rbb$ for
the grid-based subfield.)
The set of well-based transseries forms a strictly larger system
than the grid-based transseries, but with most of the same properties.
Which of these two is to be preferred may be
still open to debate.  In this paper we have
used the grid-based approach because:
\begin{enumerate}
\item[(i)] The finite ratio set is conducive to computer
calculations.
\item[(ii)] Problems from analysis almost always
have solutions in this smaller system.
\item[(iii)] Some proofs and formulations of definitions
are simpler in one system than in the other.
\item[(iv)] Perhaps (?) the analysis used for \'Ecalle--Borel
convergence can be applied only to grid-based series.
\item[(v)] \cite{shelah} In the well-based case, the domain of $\exp$
cannot be all of $\R[[\MM]]$.
\item[(vi)] \cite{harr} The grid-based ordered set $\R\lbb \MM \rbb$ is a
``Borel order,'' but the well-based ordered set $\R[[\MM]]$ is not.
\end{enumerate}

What constitutes desirable
properties of ``fields of transseries'' has been explored
axiomatically by Schmeling \cite{schmeling}.  Kuhlmann and Tressl
\cite{kuhlmanntressl} compare different constructions for
ordered fields of generalized power series with logarithm and exponential;
the grid-based transseries we
have used here might be thought of as
the smallest such system (without restricting the coefficients).

Just as the real number system $\R$ is extended to the
complex numbers $\C$, there are ways to extend the system
of real transseries to allow for complex numbers.
The simplest uses the same
group $\G_{\bullet\bullet}$ of monomials, but then takes
complex coefficients to form $\C\lbb \G_{\bullet\bullet} \rbb
= \C\lbbb x \rbbb$.
For example, the fifth-degree equation
in Problem~\ref{fifth} has five solutions
in $\C\lbb \G_{\bullet\bullet} \rbb$.  But this still won't
give us oscillatory functions, such as solutions
to the differential equation $Y''+Y=0$.  There is a
way \cite[Section~7.7]{hoeven} to define \Def{oscillating transseries}.
These are finite sums
\begin{equation*}
	\sum_{j=1}^n \alpha_j e^{i\psi_j},
\end{equation*}
with amplitudes $\alpha_j \in \C\lbbb x \rbbb$ and
purely large phases $\psi_j \in \R\lbbb x \rbbb$.
And van der Hoeven \cite{hoevencomplex} considers
defining complex transseries using the same method as
we used for real transseries, where the required
orderings are done
in terms of sectors in the complex plane.

\section*{Acknowledgements}
\addcontentsline{toc}{section}{Acknowledgements}%
Big thanks are owed to my colleague Ovidiu Costin, without whom
there would be no paper.  Useful comments were also provided
by Joris van der Hoeven, Jacques Carette, Chris Miller,
Jan Mycelski, Salma Kuhlmann, Bill Dubuque,
and an anonymous referee.

\end{document}